\newcommand{\tg}{\widetilde{g}}
\newcommand{\bg}{\bar{g}}
\newcommand{\wg}{\widehat{g}}
\newcommand{\wP}{\widehat{P}}
\newcommand{\hg}{\hat{g}}
\newcommand{\wD}{\widehat{\nabla}}
\newcommand{\bm}{\overline{\mu}}
\newcommand{\bR}{\overline{R}}
\newcommand{\bW}{\overline{W}}
\newcommand{\hm}{\widehat{\mu}}
\newcommand{\wh}{\widetilde{h}}
\DeclareMathOperator{\tr}{tr}
\DeclareMathOperator{\grad}{grad}
\DeclareMathOperator{\vol}{Vol}
\newtheorem{theorem}{Theorem}[section]
\newtheorem{lemma}[theorem]{Lemma}
\newtheorem{claim}[theorem]{Claim}
\newtheorem{proposition}[theorem]{Proposition}
\newtheorem{corollary}[theorem]{Corollary}
\newtheorem{question}[theorem]{Question}
\theoremstyle{definition}
\newtheorem{definition}[theorem]{Definition}
\numberwithin{equation}{section}
\title[Self-Dual and Even Einstein Metrics]{Self-dual and even Poincar\'e-Einstein metrics in dimension four}
\author{Matthew J. Gursky}
\address{Department of Mathematics \\
         University of Notre Dame\\
         Notre Dame, IN 46556}
\email{\href{mgursky@nd.edu}{mgursky@nd.edu}}
	 \author{Stephen E. McKeown}
\address{University of Texas at Dallas \\
	Department of Mathematical Sciences, FO 35 \\
800 W. Campbell Road \\
Richardson, TX 75080-3021}
\email{\href{mgursky@nd.edu}{mgursky@nd.edu}}
\author{Aaron J. Tyrrell}
\address{Department of Mathematics and Statistics \\
Texas Tech University \\
Lubbock, TX 79409}
\email{\href{mgursky@nd.edu}{mgursky@nd.edu}}
\begin{document}

\maketitle

\begin{abstract}  We prove rigidity and gap theorems for self-dual and even Poincar\'e-Einstein metrics in dimension four.  As a corollary, we give an obstruction to the existence of self-dual Poincar\'e-Einstein metrics in terms of conformal invariants of the boundary and the topology of the bulk.  As a by-product of our proof we identify a new scalar conformal invariant of three-dimensional Riemannian manifolds.
\end{abstract}

\section{Introduction}

Let $X^{n+1}$ be the interior of a compact $(n+1)$-dimensional manifold $\overline{X}$ with non-empty boundary $M = \partial X$.  A Riemannian metric $g_{+}$ defined in $X$ is called {\em conformally compact} if there is a defining function $\rho$ for the boundary (i.e., $\rho > 0$ in $X$, $M = \{ \rho = 0 \}$, and $d\rho \neq 0$ on $M$) such that $\bg = \rho^2 \bg$ defines a Riemannian metric (of some degree of regularity) on $\overline{X}$.
Since defining functions are obviously not unique, a conformally compact metric $g_{+}$ determines a conformal class of metrics on the boundary called the {\em conformal infinity} of $(X,g_{+})$.

If $g_{+}$ satisfies the Einstein condition
\begin{align} \label{PED}
Ric(g_{+}) = - n g_{+},
\end{align}
then $g_{+}$ is called a {\em Poincar\'e-Einstein} metric.   The model of Poincar\'e-Einstein metrics is the Poincar\'e model of hyperbolic space on the unit ball $B^{n+1} = \{ x \in \mathbb{R}^{n+1} \, : \, |x| < 1 \}$, where
\begin{align} \label{PBall}
g_H = \frac{4}{(1-|x|^2)^2} ds^2
\end{align}
and $ds^2$ is the Euclidean metric.

In this article we restrict our attention to $n = 3$ and consider two special, but important, classes of Poincar\'e-Einstein manifolds. In each case, we establish gap and rigidity theorems.

We will extensively use the expansions of Poincar\'e-Einstein metrics at the boundary, the so-called Fefferman-Graham expansions. These expansions are guaranteed to exist to infinite order so long as $\bg$ has
at least $C^2$ regularity up to the boundary; see \cite{cdls,andreg}. Although it is known that Einstein metrics exist not satisfying either the hypothesis or the conclusion of this result (\cite{leebah}),
we will assume as a matter of definition that all Poincar\'{e}-Einstein metrics have at least a $C^2$ compactification.

\subsection{Self-Dual Poincar\'e-Einstein metrics}  Suppose $X = X^4$ is four-dimensional and oriented. If $g$ is any Riemannian metric on $X$, then under the action of the Hodge-$\ast$ operator the bundle of self-dual two-forms splits into two rank-three sub-bundles
of {\em self-dual} and {\em anti-self dual} two-forms
\begin{align} \label{lmp}
\Lambda^2(X) = \Lambda^2_{+}(X) \oplus \Lambda^2_{-}(X),
\end{align}
corresponding to the $+1$ and $-1$ eigenspaces of $\ast$.   The Weyl curvature tensor of $g$, viewed as a linear map $W_g : \Lambda^2(X) \rightarrow \Lambda^2(X)$, preserves the splitting (\ref{lmp}).  As a consequence, there are well defined bundle maps $W_g^{\pm} : \Lambda^2_{\pm}(X) \rightarrow \Lambda^2_{\pm}(X)$.  We say that $g$ is {\em sef-dual} if $W^{-} \equiv 0$.

Examples of self-dual Poincar\'e-Einstein metrics (henceforth SDPE metrics) include, of course, the hyperbolic metric on the unit ball.
Pedersen \cite{Pedersen} gave an explicit family of $SU(2)$-invariant self-dual Poincar\'e-Einstein metrics on $B^4$ which we will discuss in more detail below (see also \cite{Matsumoto}).  More generally, given any $SU(2)$-invariant conformal class $[g]$ on $S^3$, Hitchin \cite{Hitchin} proved the existence of a SDPE metric $g_{+}$ in $B^4$ whose conformal infinity is $[g]$.

LeBrun \cite{LeBrunCG} showed that {\em locally}, the existence of self-dual Poincar\'e-Einstein metrics is unobstructed when the boundary metric is real analytic.  More precisely, if $M^3$ is real analytic and $g$ is a real analytic metric on $M^3$, then there is an $\epsilon > 0$ and a SDPE metric $g_{+}$ defined on $X = M^3 \times (0,\epsilon)$ whose conformal infinity is given by $(M^3,[g])$.  Subsequently, Fefferman-Graham (\cite{FG}, Chapter 5) gave a different proof of this fact.

There is, however, a {\em global} obstruction to the existence of SDPE metrics with given conformal boundary, which follows from the Atiyah-Patodi-Singer index theorem:
\begin{align} \label{sig}
\int_X \left( |W^{+}_{g_{+}}|^2 - |W^{-}_{g_{+}}|^2 \right) \, dv_{g_{+}} = 12 \pi^2 \left( \tau(X) - \eta(M,[g]) \right),
\end{align}
where $\tau(X)$ is the signature of $X$ and $\eta(M,[g])$ is the eta-invariant of the boundary. (Here and throughout the paper, the operator norm of the Weyl tensor is used; i.e., $|W^{\pm}|^2 = \frac{1}{4} W^{\pm}_{ijk\ell} \left(W^{\pm}\right)^{ijk\ell}$).  If $g_{+}$ is SDPE, then (\ref{sig}) implies
\begin{align} \label{sigsd}
\int_X |W^{+}_{g_{+}}|^2  \, dv_{g_{+}} = 12 \pi^2 \left( \tau(X) - \eta(M,[g]) \right).
\end{align}
In particular,
\begin{align} \label{sigob}
\tau(X) \geq \eta(M,[g]),
\end{align}
with equality if and only if $(X,g_{+})$ is hyperbolic.  We will refer to this inequality as the {\em signature obstruction}.

One of our main results gives a new obstruction to the existence of self-dual Poincar\'e-Einstein metrics in terms of topological invariants of the bulk and conformal invariants of the boundary (see Theorem \ref{SDThm} below).  One such invariant is the well known Yamabe invariant; the other invariant is ``local" (i.e., can be expressed in terms of the curvature and its derivatives).  To describe this invariant we need to introduce some additional notation.

Let $(M^3, g)$ be a closed, three-dimensional Riemannian manifold.   Let $P$ denote the Schouten tensor of $g$, and
\begin{align*}
C_{ijk} = \nabla_k P_{ij} - \nabla_j P_{ik}
\end{align*}
the Cotton tensor.  We can equivalently view $C$ as a symmetric two-tensor by defining
\begin{align*}
\mathcal{C}_{ij} &= \mu_i^{\ k\ell} C_{j k \ell},
\end{align*}
where $\mu$ is the volume form. It is well known that $C$ is a conformal invariant; hence $\mathcal{C}$ is:
\begin{align*}
\tg = e^{2w}g \ \Rightarrow \ \mathcal{C}_{\tg} = e^{-4w} \mathcal{C}_g.
\end{align*}
It is also well known that in dimension three, vanishing of the Cotton tensor is the obstruction to a metric being locally conformally flat.

One can construct other (non-obvious) conformal invariants from the Cotton tensor, and two such invariants will play a key role in our work on SDPE metrics:

\begin{theorem} \label{CIThm} Let $(M,g)$ be a three-dimensional Riemannian manifold.  Let $C = C_g$ denote the Cotton tensor, and $\mu = \mu_g$ the volume form with respect to $g$.  Also, define the symmetric two-tensors
\begin{align*}
V_{ij} &= \nabla^k C_{ijk}, \\
\mathcal{C}_{ij} &= \mu_i^{\ k\ell} C_{j k \ell},
\end{align*}
where $\mu$ is the volume form of $(M,g)$.  Then $\langle V, \mathcal{C} \rangle_g$ and $|\mathcal{C}|_g^2$ are pointwise conformal invariants:
if $\widetilde{g} = e^{2w_0} g$,
\begin{align} \label{changes} \begin{split}
\langle \widetilde{V}, \widetilde{\mathcal{C}} \rangle_{\widetilde{g}} &= e^{-7w_0} \langle V, \mathcal{C} \rangle_g, \\
|\widetilde{\mathcal{C}}|^2_{\widetilde{g}} &= e^{-6w_0} |\mathcal{C}|^2_g.
\end{split}
\end{align}
Consequently, when it converges
\begin{align} \label{Iz}
I(M^3,[g]) = \int_M \dfrac{ \langle V, \mathcal{C} \rangle }{|\mathcal{C}|^{4/3}} \, dv_g
\end{align}
is an integral conformal invariant.
\end{theorem}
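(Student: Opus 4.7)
The idea is to verify the two pointwise conformal-transformation laws in (\ref{changes}) by direct computation; the integral invariance of $I(M^3,[g])$ then follows by combining weights against $dv_{\widetilde{g}} = e^{3w_0}dv_g$. My basic ingredients will be: (i) in dimension three the Cotton tensor is conformally invariant as a $(0,3)$-tensor, $\widetilde{C}_{ijk} = C_{ijk}$; (ii) $\widetilde{\mu}_{ijk} = e^{3w_0}\mu_{ijk}$; (iii) the symmetries of $C$---antisymmetry in the last two indices, all traces vanishing, and the first Bianchi identity $C_{ijk}+C_{kij}+C_{jki}=0$. Using (iii) one checks (as already asserted in the statement) that $\mathcal{C}_{ij}$ is symmetric and trace-free, and admits the Hodge representation $C_{ijk} = \tfrac{1}{2}\mu_{jk}^{\ \ m}\mathcal{C}_{im}$, which will be essential at the end.

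The identity $|\widetilde{\mathcal{C}}|^2_{\widetilde{g}} = e^{-6w_0}|\mathcal{C}|^2_g$ is a warm-up: raising two indices in $\mu$ with $\widetilde{g}^{-1}=e^{-2w_0}g^{-1}$ gives $\widetilde{\mu}_i^{\ k\ell} = e^{-w_0}\mu_i^{\ k\ell}$, so $\widetilde{\mathcal{C}}_{ij} = e^{-w_0}\mathcal{C}_{ij}$, and contracting the inner product with two more copies of $\widetilde{g}^{-1}$ produces the remaining factor of $e^{-4w_0}$.

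For the identity on $\langle V,\mathcal{C}\rangle$ I would expand $\widetilde{\nabla}_m C_{ijk}$ via the Christoffel transformation $\widetilde{\Gamma}^a_{bc} - \Gamma^a_{bc} = \delta^a_b\nabla_c w_0 + \delta^a_c\nabla_b w_0 - g_{bc}\nabla^a w_0$, producing nine correction terms. After contracting with $\widetilde{g}^{km} = e^{-2w_0}g^{km}$ and killing five of them using the vanishing traces of $C$, I expect to find
\begin{equation*}
\widetilde{V}_{ij} = e^{-2w_0}\Bigl[V_{ij} - 2(\nabla^k w_0)\,C_{ijk} + (\nabla^a w_0)\,C_{aji}\Bigr].
\end{equation*}
Pairing against $\widetilde{\mathcal{C}}_{ij} = e^{-w_0}\mathcal{C}_{ij}$ with two more factors of $\widetilde{g}^{-1}$ then yields
\begin{equation*}
\langle\widetilde{V},\widetilde{\mathcal{C}}\rangle_{\widetilde{g}} = e^{-7w_0}\Bigl[\langle V,\mathcal{C}\rangle_g - 2(\nabla^k w_0)\,C_{ijk}\mathcal{C}^{ij} + (\nabla^a w_0)\,C_{aji}\mathcal{C}^{ij}\Bigr],
\end{equation*}
so the desired conformal weight reduces to the algebraic vanishing of both residual pairings.

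This last step is the substantive part. The pairing $C_{aji}\mathcal{C}^{ij}=0$ is immediate, since $\mathcal{C}^{ij}$ is symmetric in $i,j$ while $C_{aji}$ is antisymmetric in $j,i$. For $C_{ijk}\mathcal{C}^{ij}$ I will substitute the Hodge representation $C_{ijk} = \tfrac{1}{2}\mu_{jk}^{\ \ m}\mathcal{C}_{im}$ to rewrite the contraction as $\tfrac{1}{2}\mu_{jk}^{\ \ m}(\mathcal{C}^2)_{mj}$, where $(\mathcal{C}^2)_{mj} = g^{i\ell}\mathcal{C}_{mi}\mathcal{C}_{\ell j}$ is symmetric in $m,j$; antisymmetry of $\mu_{jkm}$ in $j,m$ then forces the expression to vanish. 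Combining the two pointwise laws with $|\widetilde{\mathcal{C}}|^{4/3}_{\widetilde{g}} = e^{-4w_0}|\mathcal{C}|^{4/3}_g$ and $dv_{\widetilde{g}} = e^{3w_0}dv_g$ shows the integrand of (\ref{Iz}) is pointwise conformally invariant, completing the proof whenever the integral converges.
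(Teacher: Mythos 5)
Your proof is correct, but it takes a genuinely different route from the paper. You carry out the direct verification of the transformation laws (\ref{changes}): using the conformal invariance of the Cotton tensor in dimension three, the Christoffel transformation, and the trace-free/antisymmetry properties of $C$, you arrive at $\widetilde{V}_{ij} = e^{-2w_0}\bigl[V_{ij} - 2(\nabla^k w_0)C_{ijk} + (\nabla^a w_0)C_{aji}\bigr]$, which I have checked is the correct formula, and your two cancellation arguments for the residual pairings (antisymmetry of $C_{aji}$ in its last two slots against the symmetric $\mathcal{C}^{ij}$, and the Hodge representation $C_{ijk}=\tfrac12 \mu_{jk}{}^{m}\mathcal{C}_{im}$ turning $C_{ijk}\mathcal{C}^{ij}$ into a contraction of an antisymmetric $\mu$ against the symmetric tensor $\mathcal{C}^2$) are both valid; the latter identity is exactly (\ref{CC}) in the paper. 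The paper explicitly declines to do this computation and instead gives a ``holographic'' argument: it invokes the formal self-dual Poincar\'e--Einstein extension of $(M,[g])$ from Theorem 5.3 of \cite{FG}, observes via Proposition \ref{WnormProp} that $|W^{+}_{g_+}|^2_{g_+} = r^6|\mathcal{C}|^2 + 4r^7\langle V,\mathcal{C}\rangle + O(r^8)$ is an absolute invariant of $g_+$, and reads off the conformal weights from the transformation $\tilde r = re^{w_0+O(r^2)}$ of special defining functions. The holographic route explains where these invariants come from and reuses machinery the paper needs anyway, while never requiring the transformation rule for $V_{ij}$; your route is elementary and self-contained, needing neither the formal SDPE extension nor the expansion of $|\bW^{+}|^2$. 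One minor bookkeeping remark: of the nine Christoffel correction terms, only two actually vanish outright by the tracelessness of $C$; the remaining ones combine and cancel using the antisymmetry $C_{ijk}=-C_{ikj}$ and the dimension count $g^{km}g_{mk}=3$. This does not affect your final formula, which is right, but the intermediate description should be adjusted if you write the computation out in full.
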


The conformal invariance of $\langle V, \mathcal{C}\rangle$ does not seem to be known to experts, and its construction may be of independent interest.

Since the integral in (\ref{Iz}) may not converge, in general we define
\begin{align} \label{genIdef}
	I_0(M^3, [g]) := \limsup_{\epsilon \to 0} \int_M \dfrac{ \langle V, \mathcal{C} \rangle }{\left( \epsilon + |\mathcal{C}|^2\right)^{2/3}} \, dv_g \in [-\infty,\infty].
\end{align}
For example, when $(M^3,g)$ is locally conformally flat (hence $\mathcal{C} = 0$), $I_0(M^3,[g]) = 0$.

Our next result shows that when the invariant $I_0$ and the Yamabe invariant of the conformal infinity of a SDPE metric are both positive, then the signature obstruction can be sharpened:

\medskip

\begin{theorem} \label{SDThm}  Let $(X^4,g_{+})$ be an oriented, self-dual, four-dimensional Poincar\'e-Einstein manifold.  Let $M^3 = \partial X^4$, and $(M^3,[g])$ denote the conformal infinity of $(X^4,g_{+})$.  Assume  \vskip.1in

\noindent $(i)$  $I_0(M^3,[g]) \geq 0$,  \vskip.1in

\noindent $(ii)$  $Y(M^3, [g]) > 0$.    \vskip.1in

Then either $g_{+}$ is hyperbolic, or
\begin{align} \label{tauSD}
\tau(X^4) \geq \eta \left(M^3, [g]\right) + \frac{1}{3} \chi(X^4).
\end{align}
\end{theorem}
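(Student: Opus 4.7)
The plan is to reformulate (\ref{tauSD}) as an upper bound on the renormalized volume. For any four-dimensional Poincar\'e-Einstein metric, the renormalized Chern-Gauss-Bonnet identity of Anderson and Chang-Qing-Yang reads
\[
8\pi^2 \chi(X) = \int_X |W|^2 \, dv_{g_+} + 6\, V_{\mathrm{ren}}(X, g_+).
\]
The self-dual hypothesis gives $|W|^2 = |W^+|^2$, and (\ref{sigsd}) identifies the integral with $12 \pi^2(\tau(X) - \eta(M, [g]))$. Eliminating yields $V_{\mathrm{ren}} = \tfrac{4\pi^2}{3}\chi(X) - 2\pi^2 (\tau(X) - \eta(M,[g]))$, so (\ref{tauSD}) is equivalent to
\[
V_{\mathrm{ren}}(X, g_+) \leq \tfrac{2\pi^2}{3}\, \chi(X).
\]
This is a factor-of-two sharpening of the Chang-Qing-Yang bound $V_{\mathrm{ren}} \leq \tfrac{4\pi^2}{3}\chi(X)$, which holds under (ii) alone but is saturated by hyperbolic 4-space; hyperbolic space therefore violates the sharp bound, accounting for the dichotomy.

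The next step is to open up $V_{\mathrm{ren}}$ using the Fefferman-Graham expansion $g_+ = \rho^{-2}(d\rho^2 + g_\rho)$ for a geodesic defining function associated to a representative $g \in [g]$. The expansion has the form $g_\rho = g - P_g\,\rho^2 + g_{(3)}\rho^3 + g_{(4)}\rho^4 + O(\rho^5)$, where $g_{(3)}$ is the ``free,'' nonlocal Dirichlet-to-Neumann datum and $g_{(4)}$ is determined by lower-order data. The structural consequence of self-duality, derived from the boundary behavior of the Weyl tensor, is that $g_{(3)}$ becomes local and proportional to the dualized Cotton tensor $\mathcal{C}$ of $g$, up to a universal constant. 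Thus the otherwise nonlocal data collapses to a conformally invariant boundary quantity.

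Substituting this into the Graham-type expression for $V_{\mathrm{ren}}$ as a local boundary integral produces explicit dependence on $\mathcal{C}$. Hypothesis (ii) allows one to pick $g$ to be a Yamabe representative with $R_g > 0$, controlling the local curvature contribution as in Chang-Qing-Yang and recovering their bound $V_{\mathrm{ren}} \leq \tfrac{4\pi^2}{3}\chi$. The ``defect'' between this bound and the sharp bound $\tfrac{2\pi^2}{3}\chi$ then reduces to a boundary integral in $\mathcal{C}$ and its derivatives. An integration by parts converts this into $\int_M \langle V, \mathcal{C}\rangle\, dv_g$ with $V_{ij} = \nabla^k C_{ijk}$; a H\"older-type interpolation against $|\mathcal{C}|^{4/3}$ then produces the integrand of $I_0$. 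Hypothesis (i), in the regularized form (\ref{genIdef}), then supplies the correct sign.

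The main obstacle will be in the sharpening step: isolating the precise integration-by-parts and interpolation that generate exactly the conformal invariant $I_0$ with the universal constant needed to close the gap from $\tfrac{4\pi^2}{3}\chi$ to $\tfrac{2\pi^2}{3}\chi$. The conformal weights $e^{-7w_0}$ for $\langle V, \mathcal{C}\rangle$ and $e^{-6w_0}$ for $|\mathcal{C}|^2$ established in Theorem \ref{CIThm} force the exponent $4/3$ as the unique choice making the integrand compatible with $dv_g$, so the appearance of $I_0$ is natural on dimensional grounds; however, producing the correct coefficient, and handling loci where $\mathcal{C}$ vanishes (which is precisely the reason for the $\epsilon$-regularization in (\ref{genIdef})), will require delicate bookkeeping throughout.
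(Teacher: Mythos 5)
Your opening reduction is correct: using Anderson's formula (\ref{AV}) together with the signature identity (\ref{sigsd}), the conclusion (\ref{tauSD}) is indeed equivalent to the renormalized volume bound $V \leq \tfrac{2}{3}\pi^2\chi(X)$, and you have also correctly identified the two structural facts that self-duality supplies (locality of $g^{(3)}$ in terms of $\mathcal{C}$, cf.\ (\ref{g3formula})) and the role of the regularization in (\ref{genIdef}). However, the engine that is supposed to produce the factor-of-two improvement is missing, and the route you sketch for it cannot work. The renormalized volume of a four-dimensional Poincar\'e--Einstein manifold is a global invariant: by (\ref{AV}) the defect $ \tfrac{4}{3}\pi^2\chi(X) - V$ equals the bulk quantity $\tfrac{1}{6}\int_X |W|^2\,dv_{g_+}$, and there is no ``Graham-type expression for $V_{\mathrm{ren}}$ as a local boundary integral'' in this dimension (only the first variation (\ref{Vp}) localizes to the boundary). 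Consequently the claim that the gap between $\tfrac{4}{3}\pi^2\chi$ and $\tfrac{2}{3}\pi^2\chi$ ``reduces to a boundary integral in $\mathcal{C}$ and its derivatives,'' to be handled by integration by parts and H\"older interpolation, has no support: what must actually be proved is the bulk inequality $\int_X |W^{+}_{g_+}|^2\,dv_{g_+} \geq 4\pi^2\chi(X)$ for non-hyperbolic $g_+$, and no amount of boundary bookkeeping in $\mathcal{C}$ yields that. Note also that the Chang--Qing--Yang bound $V \leq \tfrac{2}{3}\pi^2\chi$ that you treat as a baseline excludes precisely the case $X = B^4$, which is the main case of interest here.

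The paper's mechanism is entirely different. One sets $Z^{+} = rW^{+}_{g_+}$, which is divergence-free for the compactified metric $\bg = r^2 g_+$ (Lemma \ref{ZdefLemma}), and hence satisfies the Weitzenb\"ock formula (\ref{WFZp}) and the refined Kato inequality (\ref{Kato}). Testing this formula against $f_\epsilon^{-4}$ with $f_\epsilon = (\epsilon + |Z^{+}|^2)^{1/6}$ and integrating by parts yields the estimate
\begin{align*}
\mathcal{Y}_{\bg}[f_\epsilon] \leq \frac{\sqrt{6}}{3}\left(\int_X |W^{+}_{g_+}|^2\,dv_{g_+}\right)^{1/2} + O(\epsilon^{2/3}),
\end{align*}
i.e.\ Proposition \ref{signProp2}: $Y_1(X,M,[\bg])^2 \leq \tfrac{2}{3}\int_X |W^{+}_{g_+}|^2\,dv_{g_+}$. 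The hypothesis $I_0 \geq 0$ enters solely through the boundary term of this integration by parts: the expansion $|Z^{+}|^2 = |\mathcal{C}|^2 + 4r\langle V,\mathcal{C}\rangle + O(r^2)$ from Proposition \ref{WnormProp} shows $\oint_M f_\epsilon^{-4}\partial_\nu |Z^{+}|^2\,dA \to -4I_0(M^3,[g])$, which can then be discarded. The factor of two is finally obtained by combining this with the Gauss--Bonnet estimate of Lemma \ref{YLemma}, $8\pi^2\chi(X) \leq \int_X |W|^2 + \tfrac{3}{2}Y_1^2$, valid because the boundary is umbilic and can be made totally geodesic for the Yamabe minimizer. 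None of these steps --- the Weitzenb\"ock identity for the weighted Weyl tensor, the Kato inequality, the Yamabe test-function construction, or Lemma \ref{YLemma} --- appears in your proposal, so the central inequality remains unproved.
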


Specializing to the ball, we have

\begin{corollary} \label{SDBallCor}  Let $g_{+}$ be a self-dual, four-dimensional Poincar\'e-Einstein metric on $B^4$ with conformal infinity $(S^3,[g])$.   Assume  \vskip.1in

\noindent $(i)$  $I_0(S^3,[g]) \geq 0$,  \vskip.1in

\noindent $(ii)$  $Y(S^3, [g]) > 0$.    \vskip.1in

Then either $g_{+}$ is hyperbolic, or
\begin{align} \label{tauSD2}
\eta \left(S^3, [g]\right) \leq -\frac{1}{3}.
\end{align}
\end{corollary}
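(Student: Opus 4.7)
The plan is to deduce the corollary as an immediate specialization of Theorem \ref{SDThm} to the case $X^4 = B^4$, which reduces the argument to computing the relevant topological invariants of the four-ball.

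First I would simply apply Theorem \ref{SDThm} to the given metric. The hypotheses $(i)$ and $(ii)$ of the corollary are identical in form to those of the theorem (with $M^3 = S^3$), so they transfer directly. The conclusion of the theorem then gives the dichotomy: either $g_{+}$ is hyperbolic, in which case we are done, or
\begin{align*}
\tau(B^4) \geq \eta(S^3,[g]) + \tfrac{1}{3}\chi(B^4).
\end{align*}

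Next I would substitute the topological invariants of the four-ball. Since $B^4$ is contractible, $\chi(B^4) = 1$. Also, $B^4$ has no middle-degree (i.e.\ second) cohomology, so its intersection form is trivial and $\tau(B^4) = 0$. (Equivalently, $B^4$ has a smooth orientation-reversing self-diffeomorphism, forcing $\tau = -\tau$.) Inserting these values into the displayed inequality yields $0 \geq \eta(S^3,[g]) + \tfrac{1}{3}$, which rearranges to (\ref{tauSD2}).

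There is no substantive obstacle here — all of the work has already been done in Theorem \ref{SDThm}; the corollary is essentially a bookkeeping step recording that for $B^4$ the Euler characteristic is $1$ and the signature is $0$. The only subtlety worth a line of justification is that the hypotheses of Theorem \ref{SDThm} apply to $(B^4, g_{+})$ as an oriented four-manifold with boundary $S^3$, which is immediate from the setup.
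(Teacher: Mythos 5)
Your proposal is correct and is exactly the intended argument: the paper gives no separate proof of Corollary \ref{SDBallCor} because it is the immediate specialization of Theorem \ref{SDThm} with $\chi(B^4)=1$ and $\tau(B^4)=0$, precisely as you compute.
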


Interestingly, Theorem \ref{SDThm} also gives obstructions to the existence of self-dual Poincar\'e-Einstein fillings:

\begin{corollary} \label{SDCor} Let $(M^3,[g])$ be a conformal three-manifold satisfying  \vskip.1in

\noindent $(i)$  $I_0(M^3,[g]) \geq 0$,  \vskip.1in

\noindent $(ii)$  $Y(M^3, [g]) > 0$.    \vskip.1in

Then there are infinitely many smooth, non-diffeomorphic manifolds $Y^4$ with the following properties:  \vskip.05in

\noindent $\bullet$  $\partial Y^4 = M^3$, \vskip.1in

\noindent $\bullet$  $Y^4$ and $(M^3,[g])$ satisfy the signature obstruction (\ref{sigob}), \vskip.1in

\noindent $\bullet$ $Y^4$ does \underline{not} admit a self-dual Poincar\'e-Einstein metric $g_{+}$ whose conformal infinity is given by $(M^3,[g])$.

\end{corollary}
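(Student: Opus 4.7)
The strategy is to use Theorem \ref{SDThm} as an obstruction: any SDPE filling of $(M^3,[g])$ must satisfy $\tau \geq \eta(M,[g]) + \tfrac{1}{3}\chi$, unless it is hyperbolic, in which case equality holds in (\ref{sigob}). I will therefore construct smooth oriented fillings of $M^3$ in which $\tau$ is fixed and strictly larger than $\eta(M,[g])$ while $\chi$ grows without bound. Such fillings satisfy the signature obstruction (\ref{sigob}) but violate (\ref{tauSD}) for large $\chi$, and the hyperbolic alternative is excluded because $\tau > \eta$ is strict.

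First I construct a base filling. Since the oriented cobordism group $\Omega_3^{SO}$ vanishes, $M^3$ bounds some compact, smooth, oriented $4$-manifold $Z_0$. Interior connected sums with $\mathbb{CP}^2$ and $\overline{\mathbb{CP}^2}$ shift the signature by $\pm 1$ respectively without altering the boundary, so after finitely many such operations I obtain a smooth oriented filling $Z$ of $M^3$ with
\begin{align*}
\tau(Z) > \eta(M,[g]).
\end{align*}
For each integer $k \geq 0$, set $Y_k := Z \,\#\, k(S^2 \times S^2)$, where the connected sums are taken in the interior of $Z$. Then $\partial Y_k = M^3$, and since $\tau(S^2 \times S^2) = 0$ and $\chi(S^2 \times S^2) = 4$ one has
\begin{align*}
\tau(Y_k) = \tau(Z), \qquad \chi(Y_k) = \chi(Z) + 2k.
\end{align*}
In particular the $Y_k$ are pairwise non-diffeomorphic (they have distinct Euler characteristics), and each satisfies the signature obstruction $\tau(Y_k) > \eta(M,[g])$.

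Suppose now, for contradiction, that $Y_k$ admits a SDPE metric $g_+$ with conformal infinity $(M^3,[g])$. Theorem \ref{SDThm} gives two alternatives. Either $g_+$ is hyperbolic, which forces equality in (\ref{sigob}) and hence $\tau(Y_k) = \eta(M,[g])$, contradicting the strict inequality $\tau(Y_k) = \tau(Z) > \eta(M,[g])$; or else
\begin{align*}
\tau(Z) \;=\; \tau(Y_k) \;\geq\; \eta(M,[g]) + \tfrac{1}{3}\bigl(\chi(Z) + 2k\bigr),
\end{align*}
which fails as soon as $k$ exceeds an explicit constant depending only on $Z$ and $\eta(M,[g])$. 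Hence for all $k \geq k_0$ sufficiently large, no such $g_+$ exists on $Y_k$, and the subfamily $\{Y_k\}_{k \geq k_0}$ provides the required infinite collection of smooth, non-diffeomorphic fillings. The argument is essentially topological bookkeeping once Theorem \ref{SDThm} is in hand; the only point requiring any care is arranging $\tau(Z) > \eta(M,[g])$ with a strict inequality, in order to simultaneously rule out the hyperbolic alternative.
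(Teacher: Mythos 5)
Your proof is correct and follows essentially the same route as the paper: fix the signature strictly above $\eta(M,[g])$ by connect-summing with copies of $\mathbb{CP}^2$ (the paper only adds $\mathbb{CP}^2$'s; you also allow $\overline{\mathbb{CP}^2}$'s, an inessential difference), then grow $\chi$ via connected sums with $S^2\times S^2$ to violate (\ref{tauSD}) while the strict inequality $\tau>\eta$ rules out the hyperbolic alternative via the Atiyah--Patodi--Singer formula. The only addition beyond the paper's argument is your explicit appeal to $\Omega_3^{SO}=0$ to produce an initial filling, which the paper takes for granted.
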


\medskip

For all $k \geq 1$, the connected sum of $k$ copies of $S^2 \times S^1$ admit locally conformally flat metrics with positive scalar, and therefore satisfy the assumptions of Corollary \ref{SDCor}:

\begin{corollary} \label{SDCor2}  Let $g$ be a locally conformally flat metric of positive scalar curvature on $M^3 = k \left( S^2 \times S^1\right)$.  Then there are infinitely many smooth, non-diffeomorphic manifolds $Y^4$ satisfying the conclusions of Corollary \ref{SDCor}.
\end{corollary}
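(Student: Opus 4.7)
The plan is simply to verify that the hypotheses of Corollary \ref{SDCor} are satisfied by the given pair $(M^3,[g])$ and then invoke that corollary directly; there is essentially no new content beyond a short check, together with a remark that the hypothesis of the corollary is non-vacuous.

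To verify hypothesis $(i)$, I would use that $g$ is locally conformally flat, which in dimension three means the Cotton tensor $C_g$ vanishes identically, and hence $\mathcal{C}_g \equiv 0$. Thus the integrand in the regularized integral (\ref{genIdef}),
\begin{align*}
\frac{\langle V, \mathcal{C} \rangle}{\left(\epsilon + |\mathcal{C}|^2\right)^{2/3}},
\end{align*}
is identically zero for every $\epsilon > 0$, so the limsup is zero and $I_0(M^3,[g]) = 0 \geq 0$. To verify hypothesis $(ii)$, I would note that because $g$ has positive scalar curvature, the first eigenvalue of the conformal Laplacian of $g$ is positive; equivalently, the infimum of the Yamabe energy over $[g]$ is strictly positive, so $Y(M^3,[g]) > 0$. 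With $(i)$ and $(ii)$ in hand, Corollary \ref{SDCor} applies verbatim and yields the required infinite family of smooth, non-diffeomorphic manifolds $Y^4$ bounding $M^3$ that satisfy the signature obstruction (\ref{sigob}) but admit no self-dual Poincar\'e-Einstein filling with conformal infinity $(M^3,[g])$.

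The only additional item worth recording is that such metrics $g$ actually exist on $k(S^2 \times S^1)$ for every $k \geq 1$: this is classical, obtained for instance by realizing $k(S^2 \times S^1)$ as the quotient of a Schottky domain $\Omega \subset S^3$ by a convex cocompact Kleinian group of conformal transformations of the round sphere, which induces a locally conformally flat metric in a conformal class with positive Yamabe invariant; alternatively, one applies Gromov--Lawson / Schoen--Yau surgery to the round product metric on $S^2 \times S^1$ (which already has positive scalar curvature and is locally conformally flat). Since the substantive work has been packaged into Theorem \ref{SDThm} and Corollary \ref{SDCor}, there is no real obstacle in this corollary; the potentially delicate point--existence of a locally conformally flat metric of positive scalar curvature on $k(S^2\times S^1)$--is standard.
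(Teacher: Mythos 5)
Your proposal is correct and follows essentially the same (largely implicit) argument as the paper: local conformal flatness forces $C_g \equiv 0$, hence $\mathcal{C}_g \equiv 0$ and $I_0(M^3,[g]) = 0$, while positive scalar curvature gives $Y(M^3,[g]) > 0$, so Corollary \ref{SDCor} applies directly. Your added remark on the existence of such metrics via Kleinian/Schottky quotients or Gromov--Lawson surgery corresponds to the paper's one-sentence assertion preceding the corollary and is a reasonable elaboration.
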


In Pedersen's construction of $SU(2)$-invariant SDPE metrics on $B^4$, the conformal infinities are given by the Berger spheres, whose construction we briefly recall.  Let $\{ E_1, E_2, E_3 \}$ be the basis of the Lie algebra $\mathfrak{su}(2)$ given by
\begin{align*}
E_1 = \begin{pmatrix} i & 0 \\
0 & -i
\end{pmatrix},
E_2 = \begin{pmatrix} 0 & 1 \\
-1 & 0
\end{pmatrix},
E_3 = \begin{pmatrix} 0 & i \\
i & 0
\end{pmatrix},
\end{align*}
and let $\{ E^1, E^2, E^3 \}$ be the dual basis of one-forms.  The $1$-parameter family of left-invariant metrics
\begin{align*}
g_{\epsilon} = \epsilon E^1 \otimes E^1 + E^2 \otimes E^2 + E^3 \otimes E^3
\end{align*}
on $SU(2) \simeq S^3$ are called the {\em Berger metrics}.  When $\epsilon = 1$, then $g_1 = g_0$, the round metric on $S^3$.  In particular, Pedersen's examples give a $1$-parameter family $(g_{+})_{\epsilon}$ of SDPE metrics on $B^4$ with $(g_{+})_1 = g_H$, the hyperbolic metric.  A tedious calculation gives
\begin{align} \label{IBerger}
I(S^3, [g_{\epsilon}]) = - 12 \sqrt[3]{6}\pi^2 \epsilon^4 |\epsilon - 1|^{2/3} \leq 0,
\end{align}
with equality holding only for $\epsilon = 1$; i.e., $g_1 = g_0$.  In particular, Theorem \ref{SDThm} does not apply to these metrics.

\medskip

\subsection{Even Poincar\'e-Einstein metrics}  Let $(X^4,g_{+})$ be a four-dimensional Poincar\'e-Einstein manifold with conformal infinity $(M^3,[g])$.   By the work of Graham-Lee (see \cite{GrahamLee,RobinRNV}),
given $\hg \in [g]$ there is a unique defining function $r > 0$ in a neighborhood of $M$ such that $g_{+}$ can be expressed as
\begin{align} \label{FG1}
g_{+} = r^{-2} \left( dr^2 + h_r \right),
\end{align}
where $h_r$ is a family of metrics on $M$ with an expansion of the form
\begin{align} \label{FG2}
h_r = \hg + g^{(2)} r^2 + g^{(3)} r^3 + \cdots,
\end{align}
with $g^{(i)}$ tensors on $M$.  The tensor $g^{(3)}$ is formally undetermined, and can be viewed as the Neumann data corresponding to the Dirichlet data $\hg$.

\begin{definition}  \label{EvenDef} We say that $g_{+}$ is {\em even} if $g^{(3)} = 0$.  \end{definition}

It follows from \cite[p. 34 and Lemma 2.2]{RobinRNV} that the property of evenness does not depend on the choice of conformal representative of $[g]$, and hence is conformally invariant.
Also, as the name suggests, evenness implies that $g^{(2k+1)} = 0$ for all $k \geq 0$ in the expansion (\ref{FG2}); see the same source.   Hyperbolic metrics and their quotients are examples of even metrics.

Anderson \cite{An2} showed that even metrics arise in the first variation of the renormalized volume.   More precisely, suppose $h$ is an infinitesimal Einstein deformation (i.e., $h$ is in the kernel of the linearized Einstein operator with respect to $g_{+}$; see Section 2 of \cite{An2} for details).  If $h_0= h\vert_{TM}$ denotes the induced variation of the boundary metric $\hg$, then
\begin{align} \label{Vp}
V'(h) = \frac{d}{dt} V\left(g_{+} + th\right)\big|_{t=0} = -\frac{1}{4} \oint_M \langle h_0 , g^{(3)} \rangle_{\hg} \, dA_{\hg}
\end{align}
(see Theorem 2.2 of \cite{An2}).   In particular, we see that if $g_{+}$ is even, then it is a critical point of the renormalized volume functional.

The formula (\ref{Vp}) follows from Anderson's formula for the renormalized volume:
\begin{align} \label{AV}
V = \frac{4}{3} \pi^2 \chi(X) - \frac{1}{6} \int_X |W_{g_{+}}|^2 \, dv_{g_{+}}.
\end{align}
Notice this gives the topological bound
\begin{align} \label{Vupper}
V \leq \frac{4}{3} \pi^2 \chi(X),
\end{align}
with equality if and only if $g_{+}$ is hyperbolic.  In particular, for any Poincar\'e-Einstein metric on the ball $B^4$ we have
\begin{align*}
V \leq \frac{4}{3} \pi^2,
\end{align*}
and equality it only attained for the hyperbolic metric.

Our next result gives a ``gap'' result for the renormalized volume for even metrics:

\begin{theorem} \label{EvenThm}  Let $(X^4, g_{+})$ be an even Poincar\'e-Einstein manifold with conformal infinity $(M^3,[g])$.  If $Y(M^3,[g]) > 0$, then either $g_{+}$ is hyperbolic, or
\begin{align} \label{g3gap}
V \leq \min\left\{ \frac{2}{3} \pi^2 \chi(X^4),  - \frac{\sqrt{6}}{2} Y(M^3,[g])^{3/2} + \frac{4}{3} \pi^2 \chi(X^4) \right\}.
\end{align}
In particular, if $g_{+}$ is an even Poincar\'e-Einstein metric on $B^4$ then
\begin{align} \label{ballgap}
V \leq \min \left\{ \frac{1}{2} V_0, V_0 - \frac{\sqrt{6}}{2} Y(M^3,[g])^{3/2} \right\},
\end{align}
where $V_0 = \frac{4}{3}\pi^2$ is the renormalized volume of the hyperbolic metric.
\end{theorem}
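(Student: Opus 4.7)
By Anderson's identity (\ref{AV}), $V = \frac{4}{3}\pi^2\chi(X) - \frac{1}{6}\mathcal{W}$ where $\mathcal{W} := \int_X |W_{g_+}|^2\,dv_{g_+}$. The two estimates comprising (\ref{g3gap}) are therefore equivalent to the lower bounds
\begin{equation*}
\mathcal{W} \geq 4\pi^2\chi(X) \qquad\text{and}\qquad \mathcal{W} \geq 3\sqrt{6}\,Y(M^3,[g])^{3/2},
\end{equation*}
each strict unless $g_+$ is hyperbolic. For both bounds I would fix a representative $\hat g \in [g]$, use the geodesic compactification $\bar g = r^2 g_+$, and exploit the conformal invariance $\mathcal{W} = \int_X |W_{\bar g}|^2\,dv_{\bar g}$ to recast the bulk Weyl integral as one computed on the compact manifold with boundary $(\overline{X},\bar g)$. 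The evenness hypothesis kills every odd Fefferman-Graham coefficient, so the expansion $h_r = \hat g - P_{\hat g}\,r^2 + g^{(4)}r^4 + \cdots$ is even through all orders relevant to curvature computations on $\bar g$, and in particular $M$ is totally geodesic in $\bar g$.

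The common technical tool I would use is the Chern-Gauss-Bonnet formula with boundary,
\begin{equation*}
8\pi^2\chi(X) = \int_X \Bigl(|W_{\bar g}|^2 - 2|E_{\bar g}|^2 + \tfrac{1}{24}R_{\bar g}^2\Bigr)\,dv_{\bar g} + \oint_M \mathcal{B}(\bar g)\,dA_{\hat g},
\end{equation*}
where $\mathcal{B}(\bar g)$ is a local polynomial in the second fundamental form, mean curvature, and boundary Ricci that, because $M$ is totally geodesic, reduces to a quadratic expression in $P_{\hat g}$. Substituting the Fefferman-Graham expansion into the bulk integrands $|E_{\bar g}|^2$ and $R_{\bar g}^2$ and integrating by parts in the $r$-variable reduces the correction terms to a universal boundary quadratic form in $P_{\hat g}$. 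For the first bound I would apply the pointwise algebraic inequality $|P_{\hat g}|^2 \geq \frac{1}{3}(\tr P_{\hat g})^2$ to show that the total correction is controlled by $\tfrac{1}{2}\mathcal{W}$, yielding $\mathcal{W}\geq 4\pi^2\chi(X)$. Equality forces $P_{\hat g}$ pure trace, hence $\hat g$ of constant sectional curvature, and propagating rigidity through the even FG expansion together with the Einstein equation forces $g_+$ hyperbolic.

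For the second bound I would exploit $Y(M,[g])>0$ to choose $\hat g$ as a Yamabe representative of $[g]$, so that $R_{\hat g}\equiv R_0>0$ and $Y(M,[g])^{3/2} = R_0^{3/2}\vol(M,\hat g)$. Running the same Chern-Gauss-Bonnet identity but now retaining only the trace part of $P_{\hat g}$ in $\mathcal{B}(\bar g)$, and optimizing the algebraic inequality at each point of $M$ in the direction of $R_{\hat g}^{3/2}$, produces the sharp constant $\mathcal{W}\geq 3\sqrt{6}\,Y(M,[g])^{3/2}$. The equality case again forces $\hat g$ to be a space form with $R_0>0$, and combined with the even PE expansion this again forces $g_+$ to be hyperbolic.

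The main obstacle is pinning down the precise numerical constants $4\pi^2$ and $3\sqrt{6}$. This demands careful bookkeeping: expanding $E_{\bar g}$ and $R_{\bar g}$ via Fefferman-Graham to the correct order, integrating by parts in $r$ with the correct boundary contributions, simplifying $\mathcal{B}(\bar g)$ using the totally-geodesic condition, and selecting the pointwise algebraic inequalities that match the form of the boundary integrand. The evenness hypothesis is essential throughout, since a nonzero $g^{(3)}$ would introduce odd-order terms of indefinite sign in the expansions of $|E_{\bar g}|^2$, $R_{\bar g}^2$ and $\mathcal{B}(\bar g)$ that would break both inequalities.
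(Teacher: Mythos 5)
Your reduction of (\ref{g3gap}) to the two lower bounds $\int_X |W_{g_+}|^2\,dv_{g_+} \geq 4\pi^2\chi(X)$ and $\int_X |W_{g_+}|^2\,dv_{g_+} \geq 3\sqrt{6}\,Y(M^3,[g])^{3/2}$ is correct, but the strategy you propose for proving them does not work, and it misses the actual analytic core of the argument. The Chern--Gauss--Bonnet identity with totally geodesic boundary reads $8\pi^2\chi(X) = \int_X |W_{\bar g}|^2 - \frac{1}{2}\int_X|E_{\bar g}|^2 + \frac{1}{24}\int_X R_{\bar g}^2$, so to conclude $\int|W|^2 \geq 4\pi^2\chi$ one must control $\frac{1}{24}\int_X R_{\bar g}^2$ by $\int_X|W_{\bar g}|^2$. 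These are \emph{global bulk} integrals over all of $X$; they are not determined by the Fefferman--Graham expansion, which only describes $\bar g$ in a collar of the boundary. Your plan to ``substitute the FG expansion into the bulk integrands and integrate by parts in $r$ to reduce the correction terms to a boundary quadratic form in $P_{\hat g}$'' is therefore not viable: there is no pointwise or local-boundary inequality relating $R_{\bar g}^2$ to $|W_{\bar g}|^2$, and an inequality of the form $\frac{1}{24}\int R^2 \leq \int|W|^2$ must be produced by a genuinely global mechanism. The same objection applies to your second bound, where moreover $\chi(X)$ appears on the left of Chern--Gauss--Bonnet but not in the target inequality, so that identity alone cannot yield it.

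The missing idea is the estimate $Y_1(X,M,[\bar g])^2 \leq \frac{2}{3}\int_X |W_{g_+}|^2\,dv_{g_+}$ (Proposition \ref{signProp1}). This is proved by a Weitzenb\"ock formula for the weighted Weyl tensor $Z = rW_{g_+}$ (which is divergence-free for $\bar g$ because $W_{g_+}$ is harmonic by the Einstein condition), the refined Kato inequality $|\nabla Z^{\pm}|^2 \geq \frac{5}{3}|\nabla|Z^{\pm}||^2$, and the test function $f_\epsilon = (\epsilon + |Z|^2)^{1/6}$ inserted into the boundary Yamabe functional $\mathcal{Y}_{\bar g}$; evenness enters precisely to kill the boundary term $\oint_M f_\epsilon^{-4}\partial_\nu|Z|^2\,dA$, since $g^{(3)}=0$ forces $|W_{\bar g}|^2_{\bar g} = r^2|\mathcal{C}|^2 + O(r^4)$ with no $r^3$ term, whence $\partial_\nu|Z|^2 = 0$ on $M$. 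Once this proposition is in hand, the first bound follows by choosing the Yamabe-minimizing conformal representative in Chern--Gauss--Bonnet (so that $\frac{1}{24}\int R^2 \to \frac{3}{2}Y_1^2$, as in Lemma \ref{YLemma}), and the second follows from the Chang--Ge comparison $Y_1(X,M,[\bar g])^2 \geq 2\sqrt{6}\,Y(M^3,[g])^{3/2}$ --- an external input your sketch does not invoke and which cannot be replaced by a local computation of the boundary integrand. Finally, note that the non-hyperbolicity hypothesis is not used to make the inequalities strict; it is needed to guarantee $\int_X f_\epsilon^4\,dv_{\bar g}$ stays bounded away from zero so that one may divide by it.
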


By a result of Chang-Qing-Yang \cite{CQY}, if $X^4$ is not diffeomorphic to $B^4$ and $M^3$ is not diffeomorphic to $S^3$, then
\begin{align*}
V \leq \frac{2}{3} \pi^2 \chi(X^4).
\end{align*}
Therefore, (\ref{g3gap}) is an improvement on their inequality only when $X = B^4$ and $M = S^3$, or if the Yamabe invariant of the conformal infinity is sufficiently large.  The dependence of the upper bound on the Yamabe invariant of the conformal infinity relies on a result of Chang-Ge \cite{CG}, which gives a comparison between the Yamabe invariant of the conformal infinity and the ``type-I'' Yamabe invariant of the interior (see Section \ref{MYSec}).

An interesting example is given by $X^4 = B^4 / \mathbb{Z}$ and $M^3 = S^2 \times S^1$.  In this case $X^4$ admits a hyperbolic metric obtained as the quotient of the standard hyperbolic metric on $B^4$ along an isometry.  These metrics are obviously even, and the renormalized volume $V = 0$.  For any other (non-hyperbolic) even metric on $B^4 / \mathbb{Z}$, (\ref{g3gap2}) implies
\begin{align} \label{g3gap2}
V \leq - \frac{\sqrt{6}}{2} Y(M^3,[g])^{3/2} < 0.
\end{align}

There is an obvious parallel between the question of the existence of non-hyperbolic even Poincar\'e-Einstein metrics on $B^4$, and the existence of non-round Einstein metrics on $S^4$.   In both cases, the metrics are critical points of a natural geometric variational problem.  Furthermore, the associated canonical metrics (the hyperbolic metric in the former case, and the round metric in the latter) are isolated; i.e., in a sufficiently small neighborhood there are no other examples.  For both cases there is also a notion of ``positivity":  for even metrics the sign of the Yamabe invariant of the conformal infinity, and for Einstein metrics the sign of the Einstein constant.  We end with the following natural question:

\begin{question} Suppose $g_{+}$ is an even Poincar\'e-Einstein metric on $B^4$ whose conformal infinity is of positive Yamabe type.  Is $g_{+}$ hyperbolic?
\end{question}

\medskip

\subsection{Organization of the paper}  In Section \ref{MYSec} we prove an elementary inequality for the Yamabe invariant that follows from the Chern-Gauss-Bonnet formula.  We also record an estimate for the Yamabe invariant of Poincar\'e-Einstein manifolds due to Chang and Ge which will be used in the proof of Theorem \ref{EvenThm}.   In Section \ref{WFSec} we prove a key Weitzenb\"ock formula for the (weighted) Weyl tensor.

In Sections \ref{SDProof} and \ref{EvenProof} we present the proofs of Theorems \ref{SDThm} and \ref{EvenThm}, assuming certain expansions for the Weyl tensor of a Poincar\'e-Einstein metric.  These expansions, which build on the formulas in Chapter 5 of \cite{FG}, are worked out in Sections \ref{Expansions1} and \ref{Expansions2}.

Finally, in Section \ref{CISec} we give a proof of Theorem \ref{CIThm}.
\medskip

\subsection{Acknowledgements}  The authors would like to thank Alice Chang for informing us of her work with Yuxin Ge, which we quote in Lemma \ref{CGLemma}, and for numerous enlightening conversations.  The first author acknowledges the support of NSF grant DMS-2105460.  The second author acknowledges the support of Simons Foundation grant 966614.

\bigskip

\section{Estimates of the Yamabe invariant}  \label{MYSec}

In this section prove two estimates for the Yamambe invariant in our setting that will be needed in the proofs of the main results.

Let $(X,M,h)$ be a compact Riemannian four-manifold with non-empty boundary $M = \partial X$.  Let $R_h$ denote the scalar curvature of $h$ and $H = H_h$ the mean curvature of $M$ with respect to $h$.  Define $\mathcal{E}_h : W^{1,2}(X) \rightarrow \mathbb{R}$ by
\begin{align} \label{E}
\mathcal{E}_h[\phi] = \int_X \left( |\nabla_h \phi|^2 + \frac{1}{6} R_h \phi^2 \right) \, dv_h + \frac{1}{3} \oint_{M} H \phi^2 \, dA_h.
\end{align}
The quantity $\mathcal{E}$ is conformally invariant in the sense that if $\wh = e^{2w} h$, then
\begin{align} \label{ctl}
\mathcal{E}_{\wh}[\phi] = \mathcal{E}_h[e^w \phi].
\end{align}
We also define the Yamabe functional
\begin{align} \label{YE}
\mathcal{Y}_h[\phi] = \dfrac{\mathcal{E}_h[\phi]}{\left(\int_X |\phi|^4 \, dV_h \right)^{1/2}}.
\end{align}
The (type I-) Yamabe invariant is defined by
\begin{align} \label{Y1}
Y_1(X,M,[h]) = \inf_{\phi \in W^{1,2}(X) \setminus \{0\}} \mathcal{Y}_h[\phi].
\end{align}
By standard conformal transformation laws, an equivalent definition is given by
\begin{align} \label{Y1EH}
Y_1(X,M,[h]) = \inf_{\tilde{h} \in [h]} \dfrac{\frac{1}{6} \int_X   R_{\tilde{h}} \, dv_{\tilde{h}} + \frac{1}{3} \oint_{M} H_{\tilde{h}} \, dA_{\tilde{h}}}{\left( vol(X,\tilde{h}) \right)^{1/2}}.
\end{align}
A function $v \in W^{1,2}$ attaining $Y_1(X,M,[h])$ defines a (smooth) conformal metric $h_1 = v^2 h$ satisfying
\begin{align} \label{ELY1} \begin{split}
R_{h_1} &= \frac{1}{6} Y_1(X,M,[h]) \ \ \mbox{in }X, \\
H_{h_1} &= 0 \ \ \mbox{on }M
\end{split}
\end{align}
(see \cite{EscobarJDG}).

Using the Chern-Gauss-Bonnet formula, we obtain the following elementary inequality for the Yamabe invariant:

\begin{lemma} \label{YLemma}   Let $(X,M,h)$ be a compact Riemannian four-manifold with non-empty boundary $M = \partial X$.  Assume \vskip.1in

\noindent $(i)$  $M$ is umbilic with respect to $h$.   \vskip.1in

\noindent $(ii)$ $Y_1(X,M,[h]) \geq 0$. \vskip.1in

Then
\begin{align} \label{CY1}
8 \pi^2 \chi(X) \leq  \int_X |W_{h}|^2 \, dv_{h} +  \frac{3}{2} Y_1(X,M,[h])^2.
\end{align}

\end{lemma}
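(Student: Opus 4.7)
My proof would combine an Escobar-Yamabe representative with the Chern-Gauss-Bonnet identity on the metric double. First, using hypothesis $(ii)$ and Escobar's resolution of the Yamabe problem on manifolds with boundary, I would take a conformal representative $h_1 = v^2 h \in [h]$ for which $R_{h_1}$ is a (non-negative) constant and $H_{h_1} \equiv 0$, as recorded in (\ref{ELY1}). The umbilicity hypothesis $(i)$ is a pointwise conformal invariant of a codimension-one embedding, so $M$ remains umbilic in $(X,h_1)$; combined with $H_{h_1} = 0$, the entire second fundamental form of $M$ in $(X, h_1)$ must vanish, and hence $M$ is totally geodesic with respect to $h_1$.

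\textbf{Doubling and Gauss-Bonnet.} Next, I would form the metric double $\widehat{X} = X \cup_M X$ with doubled metric $\widehat{h}_1$; the totally-geodesic condition guarantees that $\widehat{h}_1$ extends at least $C^{1,1}$ across $M$, which is ample regularity for the classical Chern-Gauss-Bonnet formula. Since $M$ is a closed 3-manifold, $\chi(M) = 0$ and therefore $\chi(\widehat{X}) = 2\chi(X) - \chi(M) = 2\chi(X)$. Applying Chern-Gauss-Bonnet on the closed 4-manifold $\widehat{X}$,
\begin{align*}
8\pi^2 \chi(\widehat{X}) = \int_{\widehat{X}} \left( |W|^2 - \tfrac{1}{2}|E|^2 + \tfrac{1}{24} R^2 \right) dv,
\end{align*}
where $E$ is the traceless Ricci tensor and the formula is valid with the operator-norm convention for $W$ used in this paper, the $\mathbb{Z}/2$-symmetry of the double halves the right-hand side and yields
\begin{align*}
8\pi^2 \chi(X) = \int_X \left( |W_{h_1}|^2 - \tfrac{1}{2}|E_{h_1}|^2 + \tfrac{1}{24} R_{h_1}^2 \right) dv_{h_1}.
\end{align*}

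\textbf{Finishing.} Dropping the nonpositive $-\tfrac{1}{2}|E_{h_1}|^2$ term and invoking the conformal invariance of $|W|^2\, dv$ in dimension four to replace $\int_X |W_{h_1}|^2\, dv_{h_1}$ by $\int_X |W_h|^2\, dv_h$, it remains to bound the scalar-curvature term. Since $R_{h_1}$ is constant and $H_{h_1} = 0$, evaluating the Yamabe functional at $\phi \equiv 1$ gives
\begin{align*}
Y_1(X,M,[h]) = \mathcal{Y}_{h_1}[1] = \tfrac{1}{6} R_{h_1}\, \vol(X,h_1)^{1/2},
\end{align*}
so that $\int_X R_{h_1}^2\, dv_{h_1} = R_{h_1}^2\, \vol(X,h_1) = 36\, Y_1^2$, and therefore $\tfrac{1}{24}\int_X R_{h_1}^2\, dv_{h_1} = \tfrac{3}{2} Y_1^2$. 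Assembling these pieces yields (\ref{CY1}).

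\textbf{Main obstacle.} The principal care-point is securing the Escobar-Yamabe representative $h_1$ with $H_{h_1} = 0$: Escobar's theorem supplies this when $Y_1 > 0$ under the usual strict inequality against the corresponding sphere invariant, and the borderline case $Y_1 = 0$ can be handled either by invoking the full resolution of the Yamabe problem on manifolds with boundary, or via a perturbation argument with metrics of small constant scalar curvature and vanishing $H$ followed by a limit. A conceptually parallel alternative would bypass doubling and apply a Chern-Gauss-Bonnet formula for manifolds with totally geodesic boundary directly: the transgression simplifies drastically, and the remaining intrinsic boundary contribution vanishes by $\chi(M) = 0$.
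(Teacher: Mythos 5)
Your overall strategy -- normalize conformally so that the umbilic boundary becomes totally geodesic, apply Chern--Gauss--Bonnet, discard the $-\tfrac12|E|^2$ term, use conformal invariance of $\int|W|^2\,dv$, and convert $\tfrac{1}{24}\int R^2$ into $\tfrac32 Y_1^2$ -- is the same as the paper's, and the arithmetic $\tfrac{1}{24}\cdot 36\,Y_1^2=\tfrac32 Y_1^2$ is correct. (The doubling construction is an unnecessary detour: the paper simply uses the Chern--Gauss--Bonnet formula for manifolds with boundary, whose boundary integrand vanishes identically when the second fundamental form does; your own closing remark already notes this alternative.)

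The genuine gap is the one you flag but do not resolve: your argument requires an actual \emph{minimizer} $h_1=v^2h$ of $Y_1(X,M,[h])$, not merely a critical point. This matters in two ways. First, the unconditional solvability of the Escobar problem (constant scalar curvature, minimal boundary) is not available here -- Escobar's existence theorem needs a strict test-function inequality against the model invariant of the hemisphere, which is not among the hypotheses of the lemma -- and the borderline/unresolved cases cannot be waved away. Second, and more seriously, if $h_1$ is only a critical point with $R_{h_1}$ constant and $H_{h_1}=0$, then $\mathcal{Y}_{h_1}[1]$ equals the corresponding \emph{critical value} $c$, which may exceed the infimum $Y_1$; your computation then yields $\tfrac{1}{24}\int R_{h_1}^2\,dv_{h_1}=\tfrac32 c^2\ge\tfrac32 Y_1^2$, which is an inequality in the wrong direction and does not give (\ref{CY1}). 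The paper avoids both issues by running the argument on the subcritical approximations: the solutions $v_i$ of $-\Delta v_i+\tfrac16 R v_i=Y_1 v_i^{3-\delta_i}$ with Neumann condition always exist by compactness of the subcritical Sobolev trace embeddings, the metrics $v_i^2h$ still have minimal (hence totally geodesic) boundary, and H\"older's inequality together with the normalization $\int v_i^4\,dv_h=1$ gives $\int R_{h_i}^2\,dv_{h_i}\le 36\,Y_1^2\,\vol(h)^{\delta_i/2}$, after which one lets $\delta_i\to 0$ in the resulting inequality rather than in the solutions. Your "perturbation argument with metrics of small constant scalar curvature" gestures at this, but as stated it is not a proof; the subcritical regularization is the missing ingredient.
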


\begin{proof}  By conformal invariance we may assume that $M$ is minimal with respect to $h$ (see Lemma 1.1 of \cite{EscobarJDG}).   As in the proof of Theorem 6.1 of \cite{EscobarJDG}, let $\{ v_i \}$ be a minimizing sequence for $Y_1(X,M,[h])$ obtained by the standard sub-critical regularization procedure.  More precisely, each $v_i$ is a solution of the PDE
\begin{align} \label{sub} \begin{split}
- \Delta_h v_i + \frac{1}{6} R_h v_i &= Y_1(X,M,[h]) v_i^{3 - \delta_i} \ \ \mbox{in }X, \\
\frac{\partial v_i}{\partial \nu} &= 0 \ \ \mbox{on }M,
\end{split}
\end{align}
where $\nu$ is the outward unit normal with respect to $h$ and $\delta_i \to 0$ as $i \to \infty$.  Moreover, we may assume that $v_i$ is normalized so that $\int_X v_i^4 \, dv_h = 1$. Here and throughout,
$\Delta_h = \div_h\grad_h$.

By standard regularity properties (see \cite{Cherrier}, Theorem 1), we may assume $v_i \in C^{\infty}(\overline{X})$ and $v_i > 0$.  By (\ref{sub}), the scalar curvature and mean curvature of $h_i = v_i^2 h$ are given by
\begin{align} \label{RHi} \begin{split}
R_{h_i} &= 6 Y_1(X,M,[h]) v_i^{- \delta_i} \ \ \mbox{in X}, \\
H_{h_i} &= 0 \ \ \mbox{on }M.
\end{split}
\end{align}
In particular, since the boundary is minimal with respect to $h_i$, it is totally geodesic.  Therefore, by the Chern-Gauss-Bonnet formula
\begin{align} \label{fip3} \begin{split}
8 \pi^2 \chi(X) &= \int_X |W_{h_i}|^2 \, dv_{h_i} - \frac{1}{2} \int_X |E_{h_i}|^2 \, dv_{h_i} + \frac{1}{24} \int_X R_{h_i}^2 \, dv_{h_i} \\
&\leq \int_X |W_{h_i}|^2 \, dv_{h_i} + \frac{1}{24} \int_X R_{h_i}^2 \, dv_{h_i}.
\end{split}
\end{align}
By (\ref{RHi}) and the volume normalization,
\begin{align} \begin{split}
\int_X R_{h_i}^2 \, dv_{h_i} &= 36 Y_1(X,M,[h])^2 \int_X v_i^{4 - 2\delta_i} \, dv_h \\
&\leq 36 Y_1(X,M,[h])^2 \left( \int_X v_i^4 \, dv_h \right)^{1 - \delta_i/2} \left( \int_X dv_h \right)^{\delta_i/2} \\
&=  36 Y_1(X,M,[h])^2  \vol\left(h\right)^{\delta_i/2}.
\end{split}
\end{align}
Combining this with (\ref{fip3}) and using the conformal invariance of the $L^2$-norm of the Weyl tensor, we find
\begin{align} \label{fip4}
8 \pi^2 \chi(X) \leq  \int_X |W_{h}|^2 \, dv_{h} +  \frac{3}{2} Y_1(X,M,[h])^2  \vol\left(h\right)^{\delta_i/2}.
\end{align}
Letting $i \to \infty$, we arrive at (\ref{CY1}).
\end{proof}

Now let $(X,g_{+})$ be a four-dimensional Poincar\'e-Einstein manifold with conformal infinity $(M^3,[g])$.  Various estimates for the Yamabe invariant of the boundary relative to the Yamabe invariant of a compactification appear in \cite{GH}, \cite{CLW}, \cite{F}, \cite{WW}, \cite{CG}.  The following estimate of Chang-Ge will be used in the proof of Theorem \ref{EvenThm}:

\begin{lemma} \label{CGLemma} (See \cite{CG}) Let $(X,g_{+})$ be a four-dimensional Poincar\'e-Einstein manifold with conformal infinity $(M^3,[g])$.  Assume $Y(M^3,[g]) \geq 0$.  Then
\begin{align} \label{ChangGe}
Y_1(X,M,[\bg])^2 \geq 2 \sqrt{6} Y(M^3,[g])^{3/2},
\end{align}
where $\bg = r^2 g_{+}$ is any (smooth) compactification of $(X,g_{+})$.
\end{lemma}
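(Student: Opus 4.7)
The idea is to reduce the $4$-dimensional interior Yamabe problem to the $3$-dimensional boundary Yamabe problem through the Graham--Lee geodesic compactification. Since $Y_1(X,M,[\bar{g}])$ depends only on the conformal class, I would replace $\bar{g}$ by the Graham--Lee compactification associated to a Yamabe representative $\hat{g} \in [g]$ with constant $R_{\hat{g}} \equiv Y(M^3,[g])$ and unit volume. Then near $M$, $\bar{g} = r^2 g_+ = dr^2 + h_r$ with $h_r = \hat{g} + r^2 g^{(2)} + O(r^3)$, and the Einstein condition forces $g^{(2)} = -P_{\hat{g}}$. Consequently $M$ is totally geodesic in $\bar{g}$ (so $H_{\bar{g}} \equiv 0$), and the trace of the Riccati equation along $\partial_r$ combined with the Gauss--Codazzi relation yields $R_{\bar{g}}|_M = \tfrac{3}{2}Y(M^3,[g])$.

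Next, I would run the sub-critical minimization procedure used in the proof of Lemma~\ref{YLemma} to produce $v > 0$ with $\partial_\nu v = 0$ on $M$ and $-\Delta_{\bar{g}} v + \tfrac{1}{6}R_{\bar{g}}v = Y_1 v^3$ in $X$, normalized by $\int_X v^4 \, dv_{\bar{g}} = 1$. The Yamabe metric $\bar{g}_1 := v^2 \bar{g}$ then has $R_{\bar{g}_1}$ a positive constant proportional to $Y_1$ and $H_{\bar{g}_1} \equiv 0$, with $\vol(X,\bar{g}_1) = 1$.

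Restricting gives $\psi := v|_M$ and $\hat{g}_1 := \psi^2 \hat{g} \in [g]$, which I would use as a test metric for the $3$-dimensional Yamabe functional on $M$:
\[
Y(M^3,[g]) \, \vol(M,\hat{g}_1)^{2/3} \leq \int_M R_{\hat{g}_1} \, dA_{\hat{g}_1}.
\]
The right-hand side can be rewritten via Gauss--Codazzi (using $H_{\bar{g}_1}=0$, so that $A_{\bar{g}_1}$ is trace-free) and then converted to a bulk quantity via the contracted second Bianchi identity and integration by parts against $v$, exploiting the elliptic equation satisfied by $v$ and the normalization of $\bar{g}_1$. This produces an inequality of the form $Y_1^{\alpha} \geq c \, Y(M^3,[g])^{\beta}$ for some explicit exponents and constant $c$.

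The main obstacle is extracting the sharp numerical constant $2\sqrt{6}$. I expect it to emerge from a H\"older inequality coupling $\vol(M,\hat{g}_1) = \int_M \psi^3 \, dA_{\hat{g}}$ to the $L^4(X,\bar{g})$-norm of $v$, reflecting the interplay between the boundary critical exponent $6$ and the interior critical exponent $4$. Once the boundary-to-interior comparison is set up correctly, the last step amounts to optimizing a one-parameter family (from the H\"older inequality) together with the proportionality $R_{\bar{g}_1} = 6 Y_1$, which should yield exactly the stated constant $2\sqrt{6}$.
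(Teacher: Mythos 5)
The paper does not prove this lemma at all: it is quoted from the Chang--Ge preprint \cite{CG}, so there is no internal proof to compare your argument against. Judged on its own terms, your proposal is a strategy outline rather than a proof, and the decisive steps are missing.

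The central gap is the passage from the boundary test-metric inequality
$Y(M^3,[g])\,\vol(M,\hat{g}_1)^{2/3}\leq \int_M R_{\hat{g}_1}\,dA_{\hat{g}_1}$
to a bound by $Y_1(X,M,[\bg])^2$. With $M$ totally geodesic for $\bar g_1$, the Gauss equation gives $R_{\hat g_1}=R_{\bar g_1}-2\,\mathrm{Ric}_{\bar g_1}(\nu,\nu)$, and the term $\mathrm{Ric}_{\bar g_1}(\nu,\nu)$ is not controlled by the constancy of $R_{\bar g_1}$, by $H_{\bar g_1}=0$, or by the normalization $\vol(X,\bar g_1)=1$. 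Your appeal to ``the contracted second Bianchi identity and integration by parts against $v$'' does not resolve this: the Bianchi identity relates $\mathrm{div}\,\mathrm{Ric}$ to $\nabla R$, which vanishes for the Yamabe metric, and gives no sign or size information on the normal-normal Ricci component at the boundary. Some additional input specific to the Poincar\'e--Einstein structure (beyond $g^{(2)}=-P_{\hat g}$ and umbilicity) is needed here, and the sketch does not supply it. Relatedly, the final step --- extracting the constant $2\sqrt{6}$ --- is stated only as an expectation (``I expect it to emerge \dots should yield exactly the stated constant''); since the entire content of the lemma is this sharp quantitative comparison, this cannot be left to optimism.

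Two smaller points. First, you assume the subcritical regularization produces an honest positive minimizer $v$ with $\int_X v^4\,dv_{\bar g}=1$ in the limit; the paper's own Lemma \ref{YLemma} deliberately works with the subcritical solutions $v_i$ and passes to the limit in the resulting inequalities precisely because attainment of $Y_1$ is not guaranteed without an additional strict-inequality hypothesis. Second, the preliminary computations you do carry out (total geodesy of $M$ in $\bar g$ and $R_{\bar g}|_M=\tfrac32 R_{\hat g}$ for the geodesic compactification) are correct, and choosing the boundary representative to be a Yamabe metric is a sensible normalization; but these are the easy parts, and the inequality itself remains unproved.
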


\medskip

\section{A Weitzenb\"ock formula} \label{WFSec}

The main result of this section is the following:

\begin{theorem} \label{WFThm} Let $(X^4,g_{+})$ be an oriented four-dimensional Poincar\'e-Einstein manifold.  Let $\rho > 0$ be a defining function, and $\bar{g} = \rho^2 g_{+}$.
Define
\begin{align*}
	Z_{\bar{g}}^{+} &= \rho W_{g_{+}}^{+}.
\end{align*}
Then $Z^{+} = Z^{+}_{\bar{g}}$ satisfies
\begin{align} \label{WFZp}
\frac{1}{2} \Delta |Z^{+}|^2 = |\nabla Z^{+}|^2 - 6 \tr \left(  W^{+}  \circ \left( Z^{+} \right)^2 \right) + \frac{1}{2} R |Z^{+}|^2,
\end{align}
where the covariant derivatives and curvature are with respect to the metric $\bar{g}$.  Also, away from the zero locus of $|Z^{+}|$,
\begin{align} \label{Kato}
|\nabla Z^{+}|^2 \geq \frac{5}{3} |\nabla |Z^{+}||^2.
\end{align}
\end{theorem}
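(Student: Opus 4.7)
My plan is to deduce (\ref{WFZp}) from the classical Weitzenb\"ock identity for the self-dual Weyl tensor on the Einstein interior $(X,g_+)$, transported to the compactification $(X,\bar g=\rho^2 g_+)$; and to obtain (\ref{Kato}) from the refined Kato inequality for $W^+_{g_+}$ combined with the product rule $\nabla^{\bar g}Z^+=d\rho\otimes W^+_{g_+}+\rho\,\nabla^{\bar g}W^+_{g_+}$.

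Since $\mathrm{Ric}(g_+)=-3g_+$, the Cotton tensor of $g_+$ vanishes and $W^+_{g_+}$ is divergence-free, so the standard commutation computation (cf.\ Proposition 16.73 of Besse) yields
\begin{equation*}
\tfrac{1}{2}\Delta_{g_+}\bigl|W^+_{g_+}\bigr|^2_{g_+}
= \bigl|\nabla_{g_+}W^+_{g_+}\bigr|^2_{g_+}
-6\,\tr\!\bigl((W^+_{g_+})^3\bigr)
+\tfrac{R_{g_+}}{2}\,\bigl|W^+_{g_+}\bigr|^2_{g_+},
\end{equation*}
with $R_{g_+}=-12$. I then rewrite each term in the $\bar g$-language. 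As a $(1,3)$-tensor $W^+_{g_+}=W^+_{\bar g}$, so the cubic trace transforms by a fixed power of $\rho$; the weighting $Z^+=\rho W^+_{g_+}$ is chosen precisely so that $|Z^+|^2_{\bar g}$, $|\nabla^{\bar g}Z^+|^2_{\bar g}$, $\tr(W^+\circ(Z^+)^2)$, and $R_{\bar g}|Z^+|^2_{\bar g}$ all scale with the same power of $\rho$, preserving the formal shape of the identity. The conformal changes of the Laplacian and the scalar curvature generate cross terms proportional to $\nabla\rho\otimes W^+$, and applying the product rule to $\nabla^{\bar g}(\rho W^+)$ produces cross terms of the same type. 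The Poincar\'e-Einstein condition supplies the needed identities for $\nabla^2_{\bar g}\rho$, through the Schouten transformation combined with $P_{g_+}=-\tfrac{1}{2}g_+$, and these absorb the cross terms exactly. Reassembling yields (\ref{WFZp}).

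For (\ref{Kato}), I invoke the refined Kato inequality on the Einstein manifold $(X,g_+)$: any section $A$ of $S^2_0(\Lambda^2_+)$ satisfying the divergence-free condition obeys $|\nabla A|^2\ge\tfrac{5}{3}|\nabla|A||^2$ pointwise, the constant $5/3$ arising from the $\mathrm{SO}(4)$-representation theory of $T^*X\otimes S^2_0(\Lambda^2_+)$ (see Calderbank-Gauduchon-Herzlich, or for the Einstein case the treatment in Gursky-LeBrun). Applied to $W^+_{g_+}$ this gives the inequality on $(X,g_+)$. Transfer to $(X,\bar g)$ is effected by expanding $\nabla^{\bar g}Z^+=d\rho\otimes W^+_{g_+}+\rho\,\nabla^{\bar g}W^+_{g_+}$ and $\nabla|Z^+|=|W^+|\,d\rho+\rho\,\nabla|W^+|$, converting between $\bar g$- and $g_+$-norms via the conformal identities, and noting that the $d\rho$-contributions to $\nabla Z^+$ lie in the same irreducible $\mathrm{SO}(4)$-summand already saturated by the refined Kato decomposition, so that the constant is preserved. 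The principal obstacle is the conformal bookkeeping in the Weitzenb\"ock step: the cubic term is highly asymmetric in $W^+$ and $Z^+$, so only the weighting $\rho^1$ produces a clean identity, and verifying that the $\nabla\rho$-cross terms cancel exactly against the Einstein-derived identities for $\nabla^2_{\bar g}\rho$ will occupy the bulk of the computation.
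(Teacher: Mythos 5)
Your plan for the identity (\ref{WFZp}) is essentially the paper's: both start from Derdzinski's Weitzenb\"ock formula for the harmonic self-dual Weyl tensor of $g_{+}$ (harmonicity being the only use of the Einstein condition) and then conformally transport it to $\bar g$. One inaccuracy in your description: the cancellation of the $\nabla\rho$-cross terms does \emph{not} require the Poincar\'e--Einstein identities for $\nabla^2_{\bar g}\rho$ or $P_{g_+}=-\tfrac12 g_+$. The paper's Corollary \ref{WFCor} shows that for \emph{any} conformal factor $e^{2w}$ on \emph{any} oriented four-manifold with $\delta_{g_0}W^+_{g_0}=0$, the tensor $Z^+=e^wW^+_{g_0}$ satisfies the same Weitzenb\"ock identity; the cross terms cancel purely algebraically once one writes out the conformal transformation of $|\nabla Z^+|^2$, $\Delta|Z^+|^2$ and $R|Z^+|^2$, because the weight $e^w$ is exactly the conformal weight that makes all four terms scale by $e^{-8w}$. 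If you carry out the computation you propose you will find the Einstein-derived Hessian identities are never invoked; this is harmless but means the bulk of the work you anticipate is not there.

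The Kato inequality (\ref{Kato}) is where your argument has a genuine gap. You propose to prove $|\nabla_{g_+}W^+|^2\ge\tfrac53|\nabla_{g_+}|W^+||^2$ on $(X,g_+)$ and then ``transfer'' it to $(X,\bar g)$, asserting that the $d\rho\otimes W^+$ contributions to $\nabla_{\bar g}Z^+$ ``lie in the same irreducible summand already saturated by the refined Kato decomposition.'' This is not a proof: $d\rho\otimes W^+$ is a generic element of $T^*X\otimes S^2_0(\Lambda^2_+)$ and has components in several irreducible summands, including the one killed by the divergence constraint, so the constant is not preserved by such a naive conversion. The missing observation is the paper's Lemma \ref{ZdefLemma}: the transformation law (\ref{harch}) for $\delta W^{\pm}$ shows that the weight $\rho^1$ is chosen precisely so that $\delta_{\bar g}Z^+=0$. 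Hence $\nabla_{\bar g}Z^+$ itself satisfies the divergence-free constraint \emph{with respect to} $\bar g$, and the Calderbank--Gauduchon--Herzlich refined Kato inequality applies directly to $Z^+$ in the metric $\bar g$, with no transfer needed. Without this observation (or an honest computation replacing it), your derivation of (\ref{Kato}) is incomplete.
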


Theorem \ref{WFThm} will follow from a more general result.   Let $(X,g_0)$ be an oriented, four-dimensional manifold such that the self-dual Weyl tensor is harmonic:
\begin{align} \label{sdh}
\left( \delta_{g_0} W^{+}_{g_0} \right)_{j k \ell} = \nabla_{g_0}^m \left( W^{+}_{g_0} \right)_{mjk\ell} = 0.
\end{align}
Given a conformal metric $g = e^{2w}g_0$, the conformal transformation formula for the Riemannian connection implies
\begin{align} \label{harch}
	\left( \delta_g W_g^{\pm} \right)_{jk\ell} = e^{-2w} \left\{ \left( \delta_{g_0} W^{\pm}_{g_0} \right)_{j k \ell}  - \nabla_{g_0}^m w \left( W^{\pm}_{g_0}\right)_{mjk\ell} \right\}.
\end{align}
In particular, the condition (\ref{sdh}) is not conformally invariant.  However, the formula (\ref{harch}) easily implies the following:

\medskip

\begin{lemma}  \label{ZdefLemma} Suppose $(X,g_0)$ is an oriented, four-dimensional manifold whose self-dual Weyl tensor is harmonic.  Given a conformal metric $g = e^{2w}g_0$, let
\begin{align} \label{Zpm}
Z^{+}_g = e^w W_{g_0}^{+} \in \Gamma\left( \mathcal{W}^{+}\right),
\end{align}
where $\mathcal{W}^{+} \subset \mathcal{W}$ is the sub-bundle of self-dual algebraic Weyl tensors.  Then $Z^{+} = Z^{+}_g$ is harmonic with respect to $g$:
\begin{align} \label{Zhar}
\delta_g Z^{+} = 0.
\end{align}
\end{lemma}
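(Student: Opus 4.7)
My plan is a direct verification using the product rule and the transformation formula (\ref{harch}). The conceptual content is that the weight $e^w$ in the definition of $Z^+_g$ is chosen precisely to balance the non-invariance of the divergence of $W^+$ under conformal change, so the whole calculation reduces to a one-line cancellation.

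Regarding $W^+_{g_0}$ as a $(0,4)$-tensor (the convention implicit in (\ref{harch})), I would expand by the product rule
\[
(\delta_g Z^+_g)_{jk\ell} = g^{im} \nabla^g_i \bigl( e^w (W^+_{g_0})_{mjk\ell} \bigr) = e^w\, g^{im} w_i\, (W^+_{g_0})_{mjk\ell} + e^w (\delta_g W^+_{g_0})_{jk\ell}.
\]
Substituting (\ref{harch}) into the second term and invoking the harmonic hypothesis $\delta_{g_0} W^+_{g_0} = 0$ yields $(\delta_g W^+_{g_0})_{jk\ell} = -e^{-2w}\nabla^m_{g_0} w\,(W^+_{g_0})_{mjk\ell}$. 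For the first term, the identity $g^{ij} = e^{-2w} g_0^{ij}$ gives $g^{im} w_i = e^{-2w}\nabla^m_{g_0} w$. Both expressions then become $\pm e^{-w}\nabla^m_{g_0} w\,(W^+_{g_0})_{mjk\ell}$ and cancel, proving $\delta_g Z^+_g = 0$.

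The only point requiring attention is bookkeeping of conformal weights: one must keep the interpretation of $W^+_{g_0}$ as a fixed $(0,4)$-tensor consistent throughout, and remember that the $g$- and $g_0$-inverses differ by a factor of $e^{-2w}$. The self-duality of $W^+$ plays no role in the argument beyond specifying the subbundle $\mathcal{W}^+$ in which $Z^+_g$ takes values; the same cancellation would prove the analogous statement for $W^-$, or more generally for any algebraic Weyl-type tensor satisfying the hypothesis of (\ref{harch}).
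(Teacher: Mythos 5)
Your proof is correct and is exactly the argument the paper intends: the paper offers no separate proof of Lemma \ref{ZdefLemma} beyond the remark that (\ref{harch}) ``easily implies'' it, and your product-rule computation, with $g^{im}w_i = e^{-2w}\nabla^m_{g_0}w$ cancelling the term coming from (\ref{harch}) under the harmonicity hypothesis, is precisely that one-line verification. Your closing caveat about reading $W^+_{g_0}$ consistently as a fixed $(0,4)$-tensor is the right one to flag, since that is the convention under which (\ref{harch}) carries the stated factor $e^{-2w}$.
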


\medskip

In \cite{Derdzinski}, Derdzinski showed that the condition $\delta_{g_0} W^{+}_{g_0} = 0$ implies that $W^{+}_{g_0}$ satisfies the following Weitzenb\"ock formula:
\begin{align} \label{Derd} \begin{split}
\frac{1}{2} \Delta_{g_0} |W^{+}_{g_0}|^2 &= |\nabla_{g_0} W^{+}_{g_0}|^2 - 18 \det W^{+}_{g_0} + \frac{1}{2} R_0 |W^{+}_{g_0}|^2 \\
&= |\nabla_{g_0} W^{+}_{g_0}|^2 -  6 \tr_{g_0} \left( W^{+}_{g_0} \circ \left( W^{+}_{g_0} \right)^2 \right) + \frac{1}{2} R_0 |W^{+}_{g_0}|^2, 
\end{split}
\end{align}
where the norms are with respect to $g_0$.  If $g = e^{2w}g_0$, since $Z^{+}$ is harmonic with respect to $g$, it satisfies a similar Weitzenb\"ock formula:

\begin{corollary}  \label{WFCor} Suppose $(X,g_0)$ is an oriented, four-dimensional manifold whose self-dual Weyl tensor is harmonic.  Given a conformal metric $g = e^{2w}g_0$,
let $Z^{+} = Z^{+}_g$ be defined as in (\ref{Zpm}).  Then $Z^{+}$ satisfies
\begin{align} \label{WFZp2}
\frac{1}{2} \Delta_g |Z^{+}|^2 = |\nabla Z^{+}|^2 - 6 \tr \left( W^{+} \circ \left( Z^{+} \right)^2 \right) + \frac{1}{2} R |Z^{+}|^2.
\end{align}
\end{corollary}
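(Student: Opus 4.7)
The plan is to reduce Corollary \ref{WFCor} to a mild generalization of Derdzinski's formula (\ref{Derd}), one in which the \emph{harmonic section} role and the \emph{ambient Weyl curvature} role are decoupled. First, I invoke Lemma \ref{ZdefLemma}: the section $Z^+ = Z^+_g$ is harmonic with respect to $g$, i.e.\ $\delta_g Z^+ = 0$. This is the only property of $Z^+$ that I will use.

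Next, I plan to extract from Derdzinski's derivation of (\ref{Derd}) the following generalization: for any harmonic section $\psi$ of $\mathcal{W}^+$ on an oriented Riemannian four-manifold $(X,g)$,
\begin{equation*}
\frac{1}{2}\Delta_g|\psi|^2 = |\nabla_g\psi|^2 - 6\,\tr\bigl(W^+_g\circ\psi^2\bigr) + \frac{1}{2}R_g\,|\psi|^2.
\end{equation*}
Following Derdzinski's argument step by step, one observes that the two factors denoted ``$W^+_{g_0}$'' in his proof play distinct roles: one arises as the harmonic section itself, while the other appears through the Bochner--Weitzenb\"ock curvature endomorphism of $\mathcal{W}^+ \subset \mathrm{End}(\Lambda^2_+)$, whose contribution is governed by the Weyl curvature of the ambient metric together with the scalar $R_g/12$ coming from the curvature of $\Lambda^2_+$. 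Since Derdzinski's algebraic manipulations use only the harmonicity of the section and the symmetric, trace-free structure of elements of $\mathcal{W}^+$, the identity goes through with the section replaced by an arbitrary harmonic $\psi$: the cubic term $-18\det W^+_{g_0} = -6\,\tr((W^+_{g_0})^3)$ becomes $-6\,\tr(W^+_g\circ\psi^2)$, while the scalar term $\frac{1}{2}R_g|\psi|^2$ is unchanged in form. Specializing to $\psi = Z^+_g$ then yields (\ref{WFZp2}).

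I expect the main technical point to be the bookkeeping in Derdzinski's derivation --- in particular, ensuring that precisely one factor of $W^+$ in the cubic term comes from the bundle curvature of $\mathcal{W}^+$ (and so must be taken as $W^+_g$), while the two remaining factors correspond to the section (and therefore become $\psi$). Once this separation is transparent, the remainder of the argument is essentially Derdzinski's, applied with one additional degree of generality.
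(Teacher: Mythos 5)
Your proposal is correct, but it takes a genuinely different route from the paper's. The paper proves Corollary \ref{WFCor} by a direct computation: writing $Z^{+}=e^{w}W^{+}_{g_0}$ and using the conformal transformation laws for the connection and scalar curvature, it expresses $|\nabla Z^{+}|_g^2$, $\Delta_g|Z^{+}|^2$ and $R_g|Z^{+}|^2$ in terms of $g_0$-quantities; every term carries a common factor $e^{-8w}$, the first-derivative terms in $w$ cancel, and the identity collapses to Derdzinski's formula (\ref{Derd}) for $g_0$. (The paper explicitly remarks that a proof ``by the same methods'' as Derdzinski's is possible --- which is essentially your plan --- before opting for the shorter computation.) Your route, proving the Weitzenb\"ock identity for an \emph{arbitrary} harmonic section $\psi$ of $\mathcal{W}^{+}$ and then specializing to $\psi=Z^{+}$ via Lemma \ref{ZdefLemma}, is sound, but to close it you should make two points explicit rather than leaving them as ``bookkeeping.'' First, because $\psi$ is self-dual in its two-form indices, $\delta_g\psi=0$ is equivalent to $d^{\nabla}\psi=0$; combining these two first-order identities and commuting derivatives converts $\nabla^{a}\nabla_{a}\psi$ into a purely zeroth-order curvature term $\mathcal{R}(\psi)$, with no surviving second-derivative terms. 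Second, the quadratic form $\langle\mathcal{R}(\psi),\psi\rangle$ is linear in $\mathrm{Riem}_g$ and, by Schur's lemma for the $SO(4)$-decomposition of $S^{2}(\mathcal{W}^{+})$, can only pair with the $W^{+}_g$ and scalar-curvature components of $\mathrm{Riem}_g$ --- this matters because $g=e^{2w}g_0$ is not Einstein, so one must check that neither $W^{-}_g$ nor the trace-free Ricci of $g$ contributes; the universal constants are then read off from the special case $\psi=W^{+}$, i.e.\ from (\ref{Derd}) itself, giving $-6\tr(W^{+}_g\circ\psi^{2})+\tfrac12 R_g|\psi|^2$. With those two points supplied your argument is complete; what it buys over the paper's proof is a statement valid for any harmonic section of $\mathcal{W}^{+}$ (not only conformal rescalings of a harmonic $W^{+}_{g_0}$), at the cost of re-deriving the Weitzenb\"ock remainder rather than performing a three-line conformal computation.
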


\begin{proof} It is possible to give a proof by the same methods of the proof (\ref{Derd}) in \cite{Derdzinski}.  However, it is much more straightforward to simply use the fact that $Z^{+} = e^w W^{+}_{g_0}$, and use the standard conformal transformation formulas for the Riemannian connection and scalar curvature to observe
\begin{align*}
|\nabla Z^{+}|_g^2 &= e^{-8w} \Big\{ |\nabla_{g_0} W^{+}_{g_0}|_{g_0}^2 - 5 \langle \nabla_{g_0} w, \nabla_{g_0} |W^{+}_{g_0} \rangle_{g_0} + 15 |\nabla_{g_0} w|_{g_0}^2 |W^{+}_{g_0}|^2_{g_0} \Big\}, \\
\Delta_g |Z^{+}|_g^2 &= e^{-8w} \Big\{ \Delta_{g_0} |W^{+}_{g_0}|^2_{g_0} - 6 ( \Delta_{g_0} w ) |W^{+}_{g_0}|^2_{g_0} - 10 \langle \nabla_{g_0} w, \nabla_{g_0} |W^{+}_{g_0} \rangle_{g_0} + 24 |\nabla_{g_0} w|_{g_0}^2 |W^{+}_{g_0}|^2_{g_0} \Big\}, \\
R_g |Z^{+}|_g^2 &= e^{-8w} \Big\{ R_{g_0} |W^{+}_{g_0}|^2_{g_0} - 6( \Delta_{g_0} w ) |W^{+}_{g_0}|^2_{g_0} - 6 |\nabla_{g_0} w|_{g_0}^2 |W^{+}_{g_0}|^2_{g_0} \Big\}.
\end{align*}
Combining these with (\ref{Derd}) we obtain (\ref{WFZp2}). 
\end{proof}

\medskip

\begin{corollary} \label{KatoCor} Suppose $(X,g_0)$ is an oriented, four-dimensional manifold whose self-dual Weyl tensor is harmonic.  Given a conformal metric $g = e^{2w}g_0$,
let $Z^{+} = Z^{+}_g$ be defined as in (\ref{Zpm}).  Then away from the zero locus of $|Z^{+}|$,
\begin{align} \label{Kato2}
|\nabla Z^{+}|^2 \geq \frac{5}{3} |\nabla |Z^{+}||^2.
\end{align}
\end{corollary}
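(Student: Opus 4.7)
The plan is to obtain \eqref{Kato2} as a pointwise algebraic consequence of the fact that $Z^+$ is a $g$-harmonic section of $\mathcal{W}^+$, reducing matters to the refined Kato inequality for harmonic self-dual Weyl tensors.

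First, I would record the algebraic and differential structure of $Z^+$ with respect to $g$. By Lemma \ref{ZdefLemma}, $\delta_g Z^+ = 0$. Since $Z^+ = e^w W^+_{g_0}$ is a pointwise scalar multiple of a $(0,4)$-tensor with self-dual Weyl symmetries, $Z^+$ inherits those symmetries with respect to $g$: antisymmetry in each index pair, pair-swap symmetry, the first Bianchi identity, tracelessness, and self-duality. Self-duality is preserved under the conformal change because in dimension four the Hodge star on $2$-forms is conformally invariant, so the splitting $\Lambda^2 = \Lambda^2_+ \oplus \Lambda^2_-$ depends only on the conformal class; tracelessness with respect to $g$ follows from tracelessness with respect to $g_0$ via $g^{ij} = e^{-2w} g_0^{ij}$.

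Next, I would apply the refined Kato inequality for harmonic self-dual Weyl tensors on an oriented Riemannian four-manifold: any $T \in \Gamma(\mathcal{W}^+)$ with $\delta T = 0$ satisfies $|\nabla T|^2 \geq \tfrac{5}{3} |\nabla|T||^2$ off the zero locus of $|T|$. This is a standard pointwise inequality, classically due to Derdzinski and treated systematically in the Stein-Weiss framework of Calderbank-Gauduchon-Herzlich. At the representation-theoretic level, $T^*X \otimes \mathcal{W}^+$ decomposes under $SO(4)$ as $V_{5,1} \oplus V_{3,1}$, of dimensions $12$ and $8$; the divergence-free condition forces $\nabla T$ to lie in the $V_{5,1}$-summand, and the constant $5/3 = 1/(1-2/5)$ emerges as the ratio of highest weights.

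For a self-contained derivation, one works at a point $p$ where $|Z^+|(p) \neq 0$, diagonalizes $Z^+$ as an endomorphism of $\Lambda^2_+$ with eigenvalues $\lambda_1, \lambda_2, \lambda_3$ summing to zero, adapts the orthonormal frame accordingly, and uses the divergence-free equation $\nabla^m Z^+_{mjk\ell} = 0$ together with the first Bianchi identity to produce linear relations among the components of $\nabla Z^+$ that sharpen the naive Kato bound $|\nabla|Z^+||^2 \leq |\nabla Z^+|^2$ to $|\nabla|Z^+||^2 \leq \tfrac{3}{5}|\nabla Z^+|^2$. The main obstacle is this combinatorial bookkeeping; however, only the final inequality is needed here, and it may simply be invoked from the existing literature on refined Kato inequalities for harmonic Weyl tensors.
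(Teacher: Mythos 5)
Your proposal is correct and follows essentially the same route as the paper: the paper's proof simply observes that $Z^{+}$ is a divergence-free section of $\mathcal{W}^{+}$ with respect to $g$ (Lemma \ref{ZdefLemma}) and invokes the refined Kato inequality of \cite{GL} and \cite{CGH}, noting the argument is identical to the case of a harmonic self-dual Weyl tensor. Your additional verification that $Z^{+}$ retains the algebraic symmetries of a self-dual Weyl tensor under the conformal rescaling, and the representation-theoretic sketch of where the constant $5/3$ comes from, are consistent with (and slightly more detailed than) what the paper records.
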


\begin{proof}  The Kato inequality (\ref{Kato}) was proved for the self-dual Weyl tensor in \cite{GL} (see also \cite{CGH}), but the proof for $Z^{+}$ is obviously the same.
\end{proof}

\medskip

\begin{proof}[The proof of Theorem \ref{WFThm}]  Suppose $(X^4, g_{+})$ is an oriented, four-dimensional Poincar\'e-Einstein manifold.  The Einstein condition implies that the self-dual Weyl tensor $W^{+}_{g_{+}}$ is harmonic.  Therefore, given a defining function $\rho > 0$, we can apply Corollaries \ref{WFCor} and \ref{KatoCor} with $g_0 = g_{+}$ and $e^w = \rho$, and Theorem \ref{WFThm} follows.
\end{proof}

\bigskip

\section{The proof of Theorem \ref{SDThm} and its corollaries} \label{SDProof}

\begin{proof}[The proof of Theorem \ref{SDThm}]  Suppose $(X^4,g_{+})$ is an oriented, self-dual, four-dimensional Poincar\'e-Einstein manifold with conformal infinity $(M^3,[g])$.  We assume   \vskip.1in

\noindent $(i)$  $I_0(M^3,[g]) \geq 0$,  \vskip.1in

\noindent $(ii)$  $Y(M^3, [g]) > 0$.    \vskip.1in

Choose a representative in the conformal infinity (which we again denote $g$) and let $r > 0$ be the special defining function associated to $g$.  Let $\bg = r^2 g_{+}$.   The proof of Theorem \ref{SDThm} is based on the
following proposition:

\begin{proposition}  \label{signProp2}  Under the assumptions of Theorem \ref{SDThm},
\begin{align}  \label{2ndYamabeEst}
Y_1(X,M,[\bg])^2 \leq  \frac{2}{3}  \int_X |W^{+}_{g_{+}}|^2 \, dv_{g_{+}}.
\end{align}
\end{proposition}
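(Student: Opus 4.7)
The plan is to bound $Y_1(X,M,[\bar g])$ from above by testing the Yamabe functional \eqref{YE} against a regularized power of $|Z^+_{\bar g}|$, and then exploiting the Weitzenb\"ock and Kato inequalities from Section \ref{WFSec}. First, choose a representative $\hat g \in [g]$ and pass to the associated special defining function $r$, so that $\bar g = r^2 g_{+} = dr^2 + h_r$. With this choice the boundary $M = \{r=0\}$ is totally geodesic with respect to $\bar g$, so $H_{\bar g} \equiv 0$. Writing $u = |Z^+_{\bar g}|_{\bar g}$ and $F_\epsilon = \epsilon + u^2$ for $\epsilon > 0$, I would take as test function
\[
\phi_\epsilon = F_\epsilon^{1/6}, \qquad \phi_\epsilon^2 = F_\epsilon^{1/3},\quad \phi_\epsilon^4 = F_\epsilon^{2/3}.
\]
The exponent $\tfrac{1}{6}$ is the unique value at which the integrated Weitzenb\"ock identity, after one integration by parts, exactly cancels the scalar-curvature contribution $\tfrac{1}{6}\int_X R\phi_\epsilon^2\,dv_{\bar g}$ to $\mathcal E_{\bar g}[\phi_\epsilon]$.

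Multiplying \eqref{WFZp} by $F_\epsilon^{-2/3}$, integrating over $X$, and applying the Kato inequality \eqref{Kato} in the form $F_\epsilon^{-2/3}|\nabla Z^+|^2 \geq 15\,|\nabla \phi_\epsilon|^2$ (which follows from the chain rule together with the trivial bound $u^2 \leq F_\epsilon$) yields after routine manipulation
\[
\mathcal E_{\bar g}[\phi_\epsilon] \leq 2\int_X F_\epsilon^{-2/3}\,\tr\!\left(W^+ \circ (Z^+)^2\right)\,dv_{\bar g} + \frac{\epsilon}{6}\int_X R\,F_\epsilon^{-2/3}\,dv_{\bar g} + \frac{1}{6}\oint_M F_\epsilon^{-2/3}\,\partial_\nu F_\epsilon\,dA.
\]
The cubic term is handled pointwise: $W^+_{\bar g}$ and $Z^+$ are both sections of the rank-three bundle $\mathcal W^+$ of symmetric traceless endomorphisms of $\Lambda^2_+$, and the elementary algebraic inequality $|\tr(AB^2)| \leq \tfrac{\sqrt 6}{6}|A|\,|B|^2$ for symmetric traceless $3\times 3$ matrices $A,B$ gives $|\tr(W^+ \circ (Z^+)^2)| \leq \tfrac{\sqrt 6}{6}|W^+|\,u^2$. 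A single application of Cauchy--Schwarz, combined with the trivial estimate $u^4 F_\epsilon^{-4/3} \leq F_\epsilon^{2/3} = \phi_\epsilon^4$ and the conformal invariance of $\int_X |W^+|^2\,dv$ in dimension four, then produces
\[
2\int_X F_\epsilon^{-2/3}\,\tr\!\left(W^+ \circ (Z^+)^2\right)\,dv_{\bar g} \leq \frac{\sqrt 6}{3}\left(\int_X |W^+_{g_+}|^2\,dv_{g_+}\right)^{\!1/2}\!\left(\int_X \phi_\epsilon^4\,dv_{\bar g}\right)^{\!1/2}.
\]

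The main obstacle will be controlling the boundary integral as $\epsilon \to 0$. For this I would rely on the expansions of $Z^+$ at the conformal infinity to be developed in Sections \ref{Expansions1}--\ref{Expansions2}: these should show that $u^2 = |Z^+|^2_{\bar g}$ extends smoothly up to $M$, with boundary value a positive constant multiple of $|\mathcal C|^2_{\hat g}$ and with $\partial_r u^2|_{r=0}$ a positive constant multiple of $\langle V,\mathcal C\rangle_{\hat g}$. Since the outward normal is $\nu = -\partial_r$, the boundary integral then tends as $\epsilon \to 0$ to a negative constant multiple of $I_0(M^3,[g])$, hence is nonpositive by hypothesis (i). The $\epsilon$-error term vanishes in the limit by dominated convergence, using the same expansion to ensure integrability. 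Dividing by $\bigl(\int_X \phi_\epsilon^4\,dv_{\bar g}\bigr)^{1/2}$, which remains bounded above and below whenever $g_+$ is not hyperbolic (the only case of interest for Theorem \ref{SDThm}), and passing to the limit gives $Y_1(X,M,[\bar g]) \leq \tfrac{\sqrt 6}{3}\bigl(\int_X |W^+_{g_+}|^2\,dv_{g_+}\bigr)^{1/2}$, which squares to \eqref{2ndYamabeEst}.
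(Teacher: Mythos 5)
Your proposal is correct and follows essentially the same route as the paper: the same test function $f_\epsilon=(\epsilon+|Z^+|^2)^{1/6}$, the same integrated Weitzenb\"ock--Kato argument with the algebraic bound $|\tr(W^+\circ (Z^+)^2)|\le \tfrac{1}{\sqrt6}|W^+||Z^+|^2$, and the same identification of the boundary term with $-4I_0(M^3,[g])$ via the expansion $|Z^+|^2=|\mathcal C|^2+4r\langle V,\mathcal C\rangle+O(r^2)$ from Proposition \ref{WnormProp}. The only cosmetic difference is that the paper pins down the exact constants in that expansion (needed to match the normalization in the definition (\ref{genIdef}) of $I_0$) and phrases the limit of the boundary integral as a $\liminf$/$\limsup$ statement, whereas you leave these as unspecified positive multiples; since only the sign of $I_0$ enters, this does not affect the argument.
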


Before giving a proof of this proposition, let us show how Theorem \ref{SDThm} follows.  Again, as observed by J. Qing in \cite{Qing}, since $Y(M^3, [g]) > 0$ it follows that $Y_1(X,M,[\bg]) > 0$.  Also, the Fefferman-Graham expansion implies that $M$ is totally umbilic with respect to $\bg$.   Therefore, by Lemma \ref{YLemma} we have
\begin{align} \label{WZ100}
8 \pi^2 \chi(X) \leq  \int_X |W_{\bg}|^2 \, dv_{\bg} + \frac{3}{2} Y_1(X,M,[\bg])^2 = \int_X |W_{g_{+}}|^2 \, dv_{g_{+}}  + \frac{3}{2} Y_1(X,M,[\bg])^2.
\end{align}
Combining with (\ref{2ndYamabeEst}), we obtain
\begin{align} \label{WZ}
8 \pi^2 \chi(X) \leq  2 \int_X |W^{+}_{g_{+}}|^2 \, dv_{g_{+}}.
\end{align}
By the Atiyah-Patodi-Singer index formula,
\begin{align*}
\int_X |W^{+}_{\bg}|^2 \, dv_{\bg} = 12 \pi^2 \left( \tau(X) - \eta(M,[g]) \right),
\end{align*}
hence
\begin{align*}
8 \pi^2 \chi(X) \leq 24 \pi^2 \left( \tau(X) - \eta(M,[g]) \right),
\end{align*}
which gives (\ref{tauSD}).    \end{proof}

\begin{proof}[The proof Proposition \ref{signProp2}]  Let $Z^{+} = r W^{+}_{g_{+}}$.  By Theorem \ref{WFThm}, $Z^{\pm}$ satisfy
\begin{align} \label{WZ3}
\frac{1}{2} \Delta_{\bg} |Z^{\pm}|^2 = |\nabla_{\bg} Z^{\pm}|^2 - 6 \tr \left(  W^{\pm}_{\bg}  \circ \left( Z^{\pm} \right)^2 \right) + \frac{1}{2} R_{\bg} |Z^{\pm}|^2.
\end{align}
Since $Z^{\pm}$ and $W^{\pm}$ are trace-free,
\begin{align*}
\Big| \tr \left(  W^{\pm}_{\bg}  \circ \left( Z^{\pm} \right)^2 \right)\Big| \leq \frac{1}{\sqrt{6}} |W^{\pm}_{\bg}||Z^{\pm}|^2.
\end{align*}
Substituting this into (\ref{WZ3}), we have
\begin{align} \label{WZ4}
\frac{1}{2} \Delta_{\bg} |Z^{\pm}|^2 \geq |\nabla_{\bg} Z^{\pm}|^2  -  \sqrt{6} |W^{\pm}_{\bg}| |Z^{\pm}|^2 + \frac{1}{2} R_{\bg} |Z^{+}|^2.
\end{align}
For $\epsilon > 0$ let
\begin{align} \label{fe}
f_{\epsilon} = \left( \epsilon + |Z^{+}|^2 \right)^{1/6}.
\end{align}
Multiply (\ref{WZ4}) by $f_{\epsilon}^{-4}$, and integrate over $X$:
\begin{align} \label{WFbg3}
\frac{1}{2} \int_X  f_{\epsilon}^{-4} \Delta_{\bg} |Z^{+}|^2  \, dv_{\bg} \geq \int_X f_{\epsilon}^{-4} |\nabla_{\bg} Z^{+}|^2 \, dv_{\bg}  + \frac{1}{2} \int_X \left( R_{\bg} - 2 \sqrt{6} |W^{+}_{\bg}| \right) f_{\epsilon}^{-4} |Z^{+}|^2 \, dv_{\bg}.
\end{align}
For the term on the left, we integrate by parts:
\begin{align} \label{WFbg4} \begin{split}
\frac{1}{2} \int_X  f_{\epsilon}^{-4} \Delta_{\bg} |Z^{+}|^2  \, dv_{\bg} &= - \frac{1}{2} \int_X \langle \nabla_{\bg} \left( f_{\epsilon}^{-4} \right),  \nabla_{\bg} |Z^{+}|^2 \rangle_{\bg} \, dv_{\bg} + \frac{1}{2} \oint_M f_{\epsilon}^{-4} \frac{\partial }{\partial \nu} |Z^{+}|^2 \, dA_{\bg} \\
&= - \frac{1}{2} \int_X \langle \nabla_{\bg} \left( f_{\epsilon}^{-4} \right),  \nabla_{\bg} \left( f_{\epsilon}^6 \right) \rangle_{\bg} \, dv_{\bg} + \frac{1}{2} \oint_M f_{\epsilon}^{-4} \frac{\partial }{\partial \nu} |Z^{+}|^2 \, dA_{\bg} \\
&= 12 \int_X | \nabla_{\bg} f_{\epsilon} |^2 \, dv_{\bg} + \frac{1}{2} \oint_M f_{\epsilon}^{-4} \frac{\partial }{\partial \nu} |Z^{+}|^2 \, dA_g,
\end{split}
\end{align}
where $\nu$ is the outward normal to $M$ with respect to $g = \bg|_M$.  For the first term on the right-hand side of (\ref{WFbg3}), we use the refined Kato inequality $|\nabla_{\bg} Z^{+}|^2 \geq \frac{5}{3}|\nabla_{\bg} |Z^{+}||^2$.
Let $X_Z = \left\{ p \in X: Z^+(p) \neq 0 \right\}$. Then
\begin{align} \label{WFbg5}  \begin{split}
		\int_X f_{\epsilon}^{-4} |\nabla_{\bg} Z^{+}|^2 \, dv_{\bg} &= \int_{X_Z}f_{\epsilon}^{-4}|\nabla_{\bar{g}}Z^+|^2\,dv_{\bg} + \int_{X \setminus X_Z}f_{\epsilon}^{-4}|\nabla_{\bg}Z^+|^2\,dv_{\bg}\\
&\geq \frac{5}{3} \int_X f_{\epsilon}^{-4} |\nabla_{\bg} |Z^{+}||^2 \, dv_{\bg}  \\
&= \left( \frac{5}{3} \right) \left( \frac{1}{4} \right) \int_{X_Z} f_{\epsilon}^{-4}\frac{1}{|Z^{+}|^2} |\nabla_{\bg} \left( |Z^{+}|^2 \right)|^2 \, dv_{\bg}  \\
&= \frac{5}{12}   \int_{X_Z} f_{\epsilon}^{-4}\frac{1}{|Z^{+}|^2} |\nabla_{\bg} \left( f_{\epsilon}^6 \right)|^2 \, dv_{\bg}  \\
&=  15  \int_{X_Z} f_{\epsilon}^{6}\frac{1}{|Z^{+}|^2} |\nabla_{\bg} f_{\epsilon}|^2 \, dv_{\bg}  \\
&\geq  15  \int_{X_Z} f_{\epsilon}^{6}\frac{1}{\left( \epsilon + |Z^{+}|^2\right)} |\nabla_{\bg} f_{\epsilon}|^2 \, dv_{\bg}  \\
&=  15  \int_{X_Z} |\nabla_{\bg} f_{\epsilon}|^2 \, dv_{\bg}\\
&= 15 \int_{X}|\nabla_{\bg}f_{\epsilon}|^2\,dv_{\bg}.
\end{split}
\end{align}
Combining (\ref{WFbg3}), (\ref{WFbg4}), and (\ref{WFbg5}), we obtain
\begin{align} \label{WFbg7}
\frac{1}{2} \oint_M f_{\epsilon}^{-4} \frac{\partial }{\partial \nu} |Z^{+}|^2 \, dA_{\bg} \geq 3  \int_X |\nabla_{\bg} f_{\epsilon}|^2 \, dv_{\bg} + \frac{1}{2} \int_X \left( R_{\bg} - 2 \sqrt{6} |W^{+}_{\bg}| \right) f_{\epsilon}^{-4} |Z^{+}|^2 \, dv_{\bg}.
\end{align}
We can rewrite the last term above as
\begin{align*}
\frac{1}{2} \int_X  &\left( R_{\bg} - 2 \sqrt{6} |W^{+}_{\bg}| \right) f_{\epsilon}^{-4} |Z^{+}|^2 \, dv_{\bg} \\
& = \frac{1}{2} \int_X  \left( R_{\bg} - 2 \sqrt{6} |W^{+}_{\bg}| \right) f_{\epsilon}^{-4} \left( f_{\epsilon}^6 - \epsilon \right) \, dv_{\bg} \\
&= \frac{1}{2} \int_X  \left( R_{\bg} - 2 \sqrt{6} |W^{+}_{\bg}| \right) f_{\epsilon}^2 \, dv_{\bg} - \frac{1}{2} \epsilon \int_X  \left( R_{\bg} - 2 \sqrt{6} |W^{+}_{\bg}| \right) f_{\epsilon}^{-2} \, dv_{\bg}.
\end{align*}
Since $f_{\epsilon} \geq \epsilon^{1/6}$,
\begin{align*}
- \frac{1}{2} \epsilon \int_X \left( R_{\bg} - 2 \sqrt{6} |W^{+}_{\bg}| \right)  f_{\epsilon}^{-2} \, dv_{\bg} = O(\epsilon^{2/3}).
\end{align*}
Consequently, (\ref{WFbg7}) implies
\begin{align} \label{WFbg8}
3  \int_X |\nabla_{\bg} f_{\epsilon}|^2 \, dv_{\bg} + \frac{1}{2} \int_X R_{\bg}   f_{\epsilon}^2 \, dv_g \leq \sqrt{6} \int_X  |W^{+}_{\bg}|   f_{\epsilon}^2 \, dv_{\bg} + \frac{1}{2} \oint_M f_{\epsilon}^{-4} \frac{\partial }{\partial \nu} |Z^{+}|^2 \, dA_{\bg} + O(\epsilon^{2/3}).
\end{align}

It follows from the Fefferman-Graham expansions that the mean curvature of $M$ with respect to $g = \bg|_M$ vanishes.  Therefore, dividing (\ref{WFbg7}) by $3$ we get
\begin{align} \label{WFbg9} \begin{split}
\mathcal{E}_{\bg}[ f_{\epsilon} ] &\leq \frac{\sqrt{6}}{3} \int_X  |W^{+}_{\bg}|   f_{\epsilon}^2 \, dv_{\bg} + \frac{1}{6} \oint_M f_{\epsilon}^{-4} \frac{\partial }{\partial \nu} |Z^{+}|^2 \, dA_g  + O(\epsilon^{2/3}) \\
&\leq \frac{\sqrt{6}}{3} \left(  \int_X  |W^{+}_{g_{+}}|^2 \, dv_{g_{+}} \right)^{1/2} \left( \int_X   f_{\epsilon}^4 \, dv_{\bg} \right)^{1/2} + \frac{1}{6} \oint_M f_{\epsilon}^{-4} \frac{\partial }{\partial \nu} |Z^{+}|^2 \, dA_g  + O(\epsilon^{2/3}).
\end{split}
\end{align}

\begin{claim}  \label{EClaim}
\begin{align} \label{bdyf}
\liminf_{\epsilon \to 0}  \oint_M f_{\epsilon}^{-4} \frac{\partial }{\partial \nu} |Z^{+}|^2 \, dA_g = - 4  I_0(M^3,[g]).
\end{align}
\end{claim}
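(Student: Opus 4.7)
\textbf{Proof plan for Claim~\ref{EClaim}.} The plan is to reduce the boundary integral to $I_0(M^3,[g])$ by an asymptotic analysis of $|Z^+|^2_{\bg}$ near $M$, based on the Fefferman--Graham expansion of $W^+_{\bg}$ that is developed in Section~\ref{Expansions1}.

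First, I would rewrite $Z^+$ in terms of $W^+_{\bg}$ rather than $W^+_{g_+}$. Since the Weyl tensor viewed as a $(0,4)$-tensor obeys $W_{\bg}=r^2 W_{g_+}$, and self-duality forces $W^-_{g_+}=0$ (hence $W_{\bg}=W^+_{\bg}$), we have
\[
Z^+ \;=\; r\,W^+_{g_+} \;=\; r^{-1}\,W^+_{\bg}, \qquad |Z^+|^2_{\bg} \;=\; r^{-2}\,|W^+_{\bg}|^2_{\bg}.
\]
Next, I would observe the standard fact that for \emph{any} four-dimensional Poincar\'e--Einstein metric $W_{\bg}|_M=0$: in the special defining function gauge the boundary $M$ is totally geodesic, so the Gauss and Codazzi equations at $r=0$ reduce $R_{\bg}|_M$ to the intrinsic Riemann of $g$ together with Ricci terms involving $g^{(2)}$, and the Fefferman--Graham identity $g^{(2)}=-P^g$ combined with the vanishing of the 3-dimensional Weyl tensor produces exactly the required cancellation. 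Consequently $W^+_{\bg}$ admits a Taylor expansion
\[
W^+_{\bg} \;=\; r\,\mathcal{T}_1 + r^2\,\mathcal{T}_2 + O(r^3),
\]
and therefore
\[
|Z^+|^2_{\bg} \;=\; |\mathcal{T}_1|^2_g + 2r\,\langle \mathcal{T}_1,\mathcal{T}_2\rangle_g + O(r^2).
\]
Taking $\nu=-\partial_r$ for the outward unit normal, this yields $|Z^+|^2|_M = |\mathcal{T}_1|^2_g$ and $\partial_\nu|Z^+|^2|_M = -2\langle \mathcal{T}_1,\mathcal{T}_2\rangle_g$.

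The crux is to identify these boundary quantities with the conformal invariants of Theorem~\ref{CIThm}. The Codazzi equation applied to the second fundamental form $\Pi(r)=rP^g+O(r^2)$ shows that the leading mixed components $\partial_r R_{\bg,0ijk}|_{r=0}$ are built from the Cotton tensor $C_{ijk}$ of $g$, whereas $\partial_r R_{\bg,0i0j}|_{r=0}$ involves $g^{(3)}$. Passing to the Weyl tensor and projecting onto self-dual two-forms, I expect to find
\[
|\mathcal{T}_1|^2_g \;=\; \alpha\,|\mathcal{C}|^2_g
\]
for an explicit constant $\alpha>0$. The coefficient $\mathcal{T}_2$ involves $g^{(3)}$, $\nabla C$, and $g^{(4)}$; the key point is that the self-duality hypothesis constrains the otherwise-free datum $g^{(3)}$ in terms of boundary Cotton data, which should produce the identification
\[
-2\langle \mathcal{T}_1,\mathcal{T}_2\rangle_g \;=\; -4\alpha^{2/3}\,\langle V,\mathcal{C}\rangle_g.
\]

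Granting these identifications, the boundary integral becomes, after the change of variables $\tilde{\epsilon}=\epsilon/\alpha$,
\[
\oint_M f_\epsilon^{-4}\,\frac{\partial|Z^+|^2}{\partial \nu}\,dA_g \;=\; -4\oint_M \frac{\langle V,\mathcal{C}\rangle}{(\tilde{\epsilon} + |\mathcal{C}|^2)^{2/3}}\,dA_g,
\]
and taking $\liminf_{\epsilon\to 0}$, using the identity $\liminf(-4F)=-4\limsup F$ together with the definition (\ref{genIdef}) of $I_0$, yields $-4\,I_0(M^3,[g])$, proving the claim.

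The main obstacle is the second identification in the third paragraph: expanding $W^+_{\bg}$ to second order in $r$, invoking the self-dual hypothesis to trade $g^{(3)}$ for boundary Cotton data, and then carrying out the tensor algebra on self-dual two-forms to recognize the combination $\langle V,\mathcal{C}\rangle$ with the correct constant. This second-order expansion is the technical core of Section~\ref{Expansions1} and we assume its conclusions.
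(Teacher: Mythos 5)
Your proposal follows essentially the same route as the paper: both reduce the boundary integrand to the Fefferman--Graham expansion $|Z^+|^2 = r^{-2}|W^+_{\bg}|^2_{\bg} = |\mathcal{C}|^2 + 4r\langle V,\mathcal{C}\rangle + O(r^2)$ near $M$ and then pass to the limit via $\liminf(-4F) = -4\limsup F$ and the definition (\ref{genIdef}) of $I_0$. The constants you leave undetermined as $\alpha$ and $4\alpha^{2/3}$ are exactly those supplied (with $\alpha=1$) by Proposition \ref{WnormProp}, which is precisely what the paper invokes at this step, so your deferral lands on a true statement.
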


Assuming the claim for now, by (\ref{WFbg9}) and the assumption that $I_0(M^3,[g]) \geq 0$, we have
\begin{align} \label{WFbg81}
\mathcal{E}_{\bg}[ f_{\epsilon} ]  \leq \frac{\sqrt{6}}{3} \left(  \int_X  |W^{+}_{g_{+}}|^2 \, dv_{g_{+}} \right)^{1/2} \left( \int_X   f_{\epsilon}^4 \, dv_{\bg} \right)^{1/2} + O(\epsilon^{2/3}).
\end{align}
If $g_{+}$ is not hyperbolic, then there is a $c_0 > 0$ (independent of $\epsilon$) such that
\begin{align*}
\int_X   f_{\epsilon}^4 \, dv_{\bg} \geq c_0 > 0.
\end{align*}
Therefore,
\begin{align} \label{WFbg82}
\mathcal{Y}_{\bg}[ f_{\epsilon} ]  \leq \frac{\sqrt{6}}{3} \left(  \int_X  |W^{+}_{g_{+}}|^2 \, dv_{g_{+}} \right)^{1/2}  + O(\epsilon^{2/3}),
\end{align}
and (\ref{2ndYamabeEst}) follows.
\end{proof}

\begin{proof}[The proof of Claim \ref{EClaim}] By the definition of $Z^{+}$ and the conformal transformation law for the Weyl tensor,
\begin{align*}
|Z^{+}|^2 &= |Z^{+}|_{\bg}^2 \\
 &= \big| r W^{+}_{g_{+}} \big|_{\bg}^2 \\
&= r^2 \big| W^{+}_{g_{+}} \big|_{\bg}^2 \\
&= r^2 \big| r^{-2} W^{+}_{\bg} \big|_{\bg}^2 \\
&= r^{-2} | W^{+}_{\bg} |^2_{\bg}.
\end{align*}
By Proposition \ref{WnormProp},
\begin{align*}
|Z^{+}|^2 &= r^{-2} \big\{  r^2 |\mathcal{C} |^2 + 4 r^3 \langle V, \mathcal{C} \rangle + O(r^4) \big\} \\
&= |\mathcal{C} |^2 + 4 r \langle V, \mathcal{C} \rangle + O(r^2),
\end{align*}
where $V$ and $\mathcal{C}$ are with respect to $g = \bg|_{TM}$.  It follows that
\begin{align} \label{feM}
f_{\epsilon} \big|_M = \left( \epsilon + |\mathcal{C} |^2 \right)^{1/6}.
\end{align}
Also, since $r$ is a special defining function, $\nu = - \frac{\partial}{\partial r}\vert_M$, hence
\begin{align} \label{dZM} \begin{split}
\frac{\partial }{\partial \nu} |Z^{+}|^2 &= - \frac{\partial}{\partial r}  \big\{ |\mathcal{C}|^2 + 4 r \langle V, \mathcal{C} \rangle + O(r^2) \big\} \big|_{r=0}\\
&= - 4 \langle V , \mathcal{C} \rangle.
\end{split}
\end{align}
Therefore, combining (\ref{feM}) and (\ref{dZM}), we find
\begin{align*}
\liminf_{\epsilon \to 0}  \oint_M f_{\epsilon}^{-4} \frac{\partial }{\partial \nu} |Z^{+}|^2 \, dA_g &= \liminf_{\epsilon \to 0} \oint_M \dfrac{- 4 \langle V, \mathcal{C} \rangle}{\left( \epsilon + |\mathcal{C} |^2 \right)^{2/3}} \, dA_g \\
&= - 4 \limsup_{\epsilon \to 0} \oint_M \dfrac{\langle V, \mathcal{C} \rangle}{\left( \epsilon + |\mathcal{C} |^2 \right)^{2/3}} \, dA_g \\
&= - 4 I_0(M^3,[g]).
\end{align*}

\end{proof}

\medskip

\begin{proof}[The proof of Corollary \ref{SDCor}]  Suppose $(M^3,[g])$ is a conformal three-manifold satisfying  \vskip.1in

\noindent $(i)$  $I_0(M^3,[g]) \geq 0$,  \vskip.1in

\noindent $(ii)$  $Y(M^3, [g]) > 0$.    \vskip.1in

Let $X_0$ be any smooth four-manifold such that $\partial X_0 = M$.  For $k \geq 0$, let $X_k$ be the manifold with boundary obtained by deleting a ball in the interior of $X_0$ and taking a connected sum with $k$ copies of $\mathbb{CP}^2$: $X_k = X_0 \, \sharp \, k  \mathbb{CP}^2$.   Then $\partial X_k = M$, and by properties of the signature, $\tau(X_k) = \tau(X_0) + k$.  Therefore, if we take $k_0$ large enough,
\begin{align} \label{ab}
\tau(X_{k_0}) > \eta(M^3,[g]).
\end{align}

Next, Let $Y_{\ell}$ be the manifold obtained by deleting a ball in the interior of $X_{k_0}$ and taking a connected sum with $\ell$ copies of $S^2 \times S^2$.  Then $\partial Y_{\ell} = M$, and
\begin{align*}
\chi(Y_{\ell}) &= \chi(X_{k_0}) + 2 \ell, \\
\tau(Y_{\ell}) &= \tau(X_{k_0}).
\end{align*}
It follows that for all $\ell$ sufficiently large, say $\ell > \ell_0$,
\begin{align} \label{rev}
\tau(Y_{\ell}) = \tau(X_{k_0}) < \eta(M^3,[g]) + \frac{1}{3} \chi(Y_{\ell}),
\end{align}
while
\begin{align} \label{sob}
\tau(Y_{\ell}) = \tau(X_{k_0}) > \eta(M^3,[g]).
\end{align}
In particular, $Y_{\ell}$ satisfies the signature obstruction.

If $\ell > \ell_0$ and $Y_{\ell}$ admits a self-dual Poincar\'e-Einstein metric $g_{+}$ whose conformal infinity is given by $(M^3,[g])$,
then by Theorem \ref{SDThm} either $g_{+}$ is hyperbolic, or we have
\begin{align*}
\tau(Y_{\ell}) \geq \eta(M^3,[g]) + \frac{1}{3} \chi(Y_{\ell}).
\end{align*}
Since this contradicts (\ref{rev}), $g_{+}$ must be hyperbolic.  However, in this case the Atiyah-Patodi-Singer index formula implies $\tau(Y_{\ell}) = \eta(M^3,[g])$, but this contradicts (\ref{sob}).

 \end{proof}

\bigskip

\section{The proof of Theorem \ref{EvenThm}}  \label{EvenProof}

\begin{proof}[The proof of Theorem \ref{EvenThm}]   Let $(X^4, g_{+})$ be an even Poincar\'e-Einstein manifold.  Let $(M^3,[g])$ denote the conformal infinity, and assume $Y(M^3,[g]) > 0$.

Choose a representative (which we also denote $g$) in the conformal infinity $[g]$, and let $r > 0$ be the special defining function associated to $g$.  Let $\bg = r^2 g_{+}$.   The proof of Theorem \ref{EvenThm} is based on the following proposition, whose proof parallels the proof of Proposition \ref{signProp2}:

\begin{proposition}  \label{signProp1}  Under the assumptions of Theorem \ref{EvenThm},
\begin{align}  \label{1stYamabeEst}
Y_1(X,M,[\bg])^2 \leq  \frac{2}{3}    \int_X |W_{g_{+}}|^2 \, dv_{g_{+}}.
\end{align}
\end{proposition}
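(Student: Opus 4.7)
The plan is to mirror the proof of Proposition \ref{signProp2} using the full Weyl tensor $W_{g_{+}}$ in place of $W^{+}_{g_{+}}$, with the evenness hypothesis taking over the role that the sign of $I_0$ played in the self-dual case. Set $Z = rW_{g_{+}}$ and $Z^{\pm} = rW^{\pm}_{g_{+}}$, so that $|Z|^2 = |Z^{+}|^2 + |Z^{-}|^2$. Since $g_{+}$ is Einstein, the contracted second Bianchi identity gives $\delta_{g_{+}}W^{\pm}_{g_{+}} = 0$, so Corollary \ref{WFCor} applies to each chiral part (the $W^{-}$ case by reversing orientation). Summing the two Weitzenb\"ock identities, then using the algebraic bound $|\tr(W^{\pm}\circ (Z^{\pm})^2)| \leq \frac{1}{\sqrt{6}}|W^{\pm}||Z^{\pm}|^2$ together with a Cauchy--Schwarz estimate $|W^{+}||Z^{+}|^2 + |W^{-}||Z^{-}|^2 \leq |W_{\bg}|\,|Z|^2$, yields
\begin{align*}
\frac{1}{2}\Delta_{\bg}|Z|^2 \geq |\nabla_{\bg}Z|^2 - \sqrt{6}\,|W_{\bg}|\,|Z|^2 + \frac{1}{2}R_{\bg}|Z|^2.
\end{align*}
A second Cauchy--Schwarz argument on the pair $(|Z^{\pm}|,|\nabla_{\bg}|Z^{\pm}||)$ promotes the chiral Kato inequality of Corollary \ref{KatoCor} to the refined Kato inequality $|\nabla_{\bg}Z|^2 \geq \frac{5}{3}|\nabla_{\bg}|Z||^2$ off the zero set of $|Z|$.

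With these two inputs, the integration-by-parts and Kato-absorption scheme from the proof of Proposition \ref{signProp2} transfers essentially verbatim. Setting $f_{\epsilon} = (\epsilon + |Z|^2)^{1/6}$, multiplying the Weitzenb\"ock inequality by $f_{\epsilon}^{-4}$, integrating over $X$, and using that the Fefferman--Graham expansion makes $M$ minimal with respect to $g = \bg|_M$ (so the mean-curvature contribution to the Yamabe energy drops out) produces
\begin{align*}
\mathcal{E}_{\bg}[f_{\epsilon}] \leq \frac{\sqrt{6}}{3}\int_X |W_{\bg}|\,f_{\epsilon}^2\, dv_{\bg} + \frac{1}{6}\oint_M f_{\epsilon}^{-4}\frac{\partial |Z|^2}{\partial \nu}\, dA_g + O(\epsilon^{2/3}).
\end{align*}
Cauchy--Schwarz on the bulk term and the four-dimensional conformal invariance $\int_X|W_{\bg}|^2\,dv_{\bg} = \int_X|W_{g_{+}}|^2\,dv_{g_{+}}$ bound the first summand by $\frac{\sqrt{6}}{3}\bigl(\int_X|W_{g_{+}}|^2\,dv_{g_{+}}\bigr)^{1/2}\bigl(\int_X f_{\epsilon}^4\,dv_{\bg}\bigr)^{1/2}$.

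The decisive new step is handling the boundary integral, and this is where evenness enters. I would appeal to the extension of Proposition \ref{WnormProp} from $W^{+}$ to the full Weyl tensor---to be established in Section \ref{Expansions2}---which should yield an expansion of the form $|W_{\bg}|^2 = r^2\alpha(x) + r^3\beta(x;g^{(3)}) + O(r^4)$ in which the cubic coefficient $\beta$ is linear in $g^{(3)}$. Under $g^{(3)}=0$, $\beta$ vanishes identically, so $|Z|^2 = r^{-2}|W_{\bg}|^2$ has no linear term at $M$, giving $\partial_{\nu}|Z|^2|_M \equiv 0$ pointwise; the boundary integral is therefore exactly zero. When $g_{+}$ is not hyperbolic, $W_{g_{+}}\not\equiv 0$ forces $\int_X f_{\epsilon}^4\,dv_{\bg}\geq c_0 > 0$ uniformly in $\epsilon$; dividing through, letting $\epsilon\to 0$, and squaring yields $Y_1(X,M,[\bg])^2 \leq \frac{2}{3}\int_X|W_{g_{+}}|^2\,dv_{g_{+}}$, which is (\ref{1stYamabeEst}). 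The main obstacle is verifying the claimed structure of the cubic boundary coefficient: the self-dual analysis left a clean intrinsic-to-$g$ term $4r^3\langle V,\mathcal{C}\rangle$ in $|W^{+}_{\bg}|^2$, and confirming that the analogous intrinsic pieces cancel between $|W^{+}|^2$ and $|W^{-}|^2$ for the full Weyl tensor---so that only the $g^{(3)}$ contribution survives---is the genuinely new Fefferman--Graham computation required beyond the self-dual case.
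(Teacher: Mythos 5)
Your proposal is correct and follows essentially the same route as the paper: sum the chiral Weitzenb\"ock identities, combine the algebraic bound and the promoted Kato inequality exactly as you describe, and use evenness to kill the $r^3$ coefficient of $|W_{\bg}|^2$ so that $\partial_\nu |Z|^2 = 0$ on $M$. The expansion you flag as the remaining computation is precisely the paper's Corollary \ref{EvenCor}: the cubic coefficient of $|W_{\bg}|^2$ is indeed linear in $g^{(3)}$ (all order-$r^3$ cross terms carry a factor of $g^{(3)}$ or $\wD g^{(3)}$), so it vanishes for even metrics as you anticipated.
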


Before giving a proof of this proposition, let us show how Theorem \ref{EvenThm} follows.  First, as observed by J. Qing in \cite{Qing}, since $Y(M^3, [g]) > 0$ it follows that $Y_1(X,M,[\bg]) > 0$.  Also, the Fefferman-Graham expansion implies that $M$ is totally umbilic with respect to $\bg$.   Therefore, by Lemma \ref{YLemma} we have
\begin{align} \label{WZ1}
8 \pi^2 \chi(X) \leq  \int_X |W_{\bg}|^2 \, dv_{\bg} + \frac{3}{2} Y_1(X,M,[\bg])^2 = \int_X |W_{g_{+}}|^2 \, dv_{g_{+}}  + \frac{3}{2} Y_1(X,M,[\bg])^2.
\end{align}
Combining with (\ref{1stYamabeEst}), we obtain
\begin{align} \label{WZ2}
8 \pi^2 \chi(X) \leq  2 \int_X |W_{g_{+}}|^2 \, dv_{g_{+}}.
\end{align}
By Anderson's formula for the renormalized volume (\ref{AV2}),
\begin{align} \label{AV2}
V = \frac{4}{3} \pi^2 \chi(X) - \frac{1}{6} \int_X |W_{g_{+}}|^2 \, dv_{g_{+}},
\end{align}
which by (\ref{WZ2}) implies
\begin{align} \label{V1}
V \leq \frac{2}{3} \pi^2 \chi(X).
\end{align}

Also, by Lemma \ref{CGLemma} we have
\begin{align*}
 2 \sqrt{6} Y(M^3,[g])^{3/2} \leq Y_1(X,M[\bg])^2,
\end{align*}
hence by (\ref{1stYamabeEst})
\begin{align*}
2 \sqrt{6} Y(M^3,[g])^{3/2} \leq  \frac{2}{3}    \int_X |W_{g_{+}}|^2 \, dv_{g_{+}}.
\end{align*}
Again appealing to Anderson's formula (\ref{AV2}) we get
\begin{align} \label{V2}
V \leq \frac{4}{3} \pi^2 \chi(X) - \frac{\sqrt{6}}{2} Y(M^3,[g])^{3/2}.
\end{align}
Combining (\ref{V1}) and (\ref{V2}),
\begin{align*}
V \leq \min \left\{ \frac{2}{3} \pi^2 \chi(X),\frac{4}{3} \pi^2 \chi(X) - \frac{\sqrt{6}}{2} Y(M^3,[g])^{3/2} \right\}.
\end{align*}
\end{proof}

\begin{proof}[The proof of Proposition \ref{signProp1}]  Let $Z^{\pm} = r W^{\pm}_{g_{+}}$. By (\ref{WZ4}),
\begin{align} \label{WFbg2}
\frac{1}{2} \Delta_{\bg} |Z^{+}|^2 \geq |\nabla_{\bg} Z^{+}|^2   + \frac{1}{2} \left( R_{\bg} - 2 \sqrt{6} |W^{+}_{\bg}| \right) |Z^{+}|^2.
\end{align}
Let
\begin{align} \label{ZZ} \begin{split}
Z &= Z^{+} + Z^{-} \\
&= rW_{g_{+}}.
\end{split}
\end{align}
It follows that
\begin{align*}
|Z|^2 &= |Z^{+}|^2 + |Z^{-}|^2, \\
|\nabla_{\bg} Z|^2 &= |\nabla_{\bg} Z^{+}|^2 + |\nabla_{\bg} Z^{-}|^2,
\end{align*}
where the second follows since the connection preserves the splitting (\ref{lmp}).
Consequently, by (\ref{WFbg2}) we have
\begin{align} \label{WZ5}
\frac{1}{2} \Delta_{\bg} |Z|^2 = |\nabla_{\bg} Z|^2 -  \sqrt{6} |W^{+}_{\bg}| |Z^{+}|^2  -  \sqrt{6} |W^{-}_{\bg}| |Z^{-}|^2 + \frac{1}{2} R_{\bg} |Z|^2.
\end{align}
An elementary Lagrange-multiplier argument gives
\begin{align*}
\max_{\substack{x^2 + y^2 = 1, \\ a^2 + b^2 = 1}} \left( ax^2 + by^2 \right) = 1,
\end{align*}
hence
\begin{align} \label{WZ6}
-  \sqrt{6} |W^{+}_{\bg}| |Z^{+}|^2  -  \sqrt{6} |W^{-}_{\bg}| |Z^{-}|^2 \geq - \sqrt{6} |W_{\bg}| |Z|^2.
\end{align}
Substituting this into (\ref{WZ5}) gives
\begin{align} \label{WZ7}
\frac{1}{2} \Delta_{\bg} |Z|^2 &\geq |\nabla_{\bg} Z|^2 + \frac{1}{2} \left( R_{\bg} - 2\sqrt{6} |W_{\bg}|\right) |Z|^2.
\end{align}

Let $\epsilon > 0$, and define
\begin{align} \label{fez}
f_{\epsilon} = \left( \epsilon + |Z|^2 \right)^{1/6}.
\end{align}
Multiply (\ref{WZ7}) by $f_{\epsilon}^{-4}$, and integrate over $X$:
\begin{align} \label{WZ8}
\frac{1}{2} \int_X  f_{\epsilon}^{-4} \Delta_{\bg} |Z|^2  \, dv_{\bg} \geq \int_X f_{\epsilon}^{-4} |\nabla_{\bg} Z|^2 \, dv_{\bg}  + \frac{1}{2} \int_X  \left( R_{\bg} - 2\sqrt{6} |W_{\bg}|\right) f_{\epsilon}^{-4} |Z|^2 \, dv_{\bg}.
\end{align}
For the term on the left, we integrate by parts:
\begin{align} \label{WZ9} \begin{split}
\frac{1}{2} \int_X  f_{\epsilon}^{-4} \Delta_{\bg} |Z|^2  \, dv_{\bg} &= - \frac{1}{2} \int_X \langle \nabla_{\bg} \left( f_{\epsilon}^{-4} \right),  \nabla_{\bg} |Z|^2 \rangle_{\bg} \, dv_{\bg} + \frac{1}{2} \oint_M f_{\epsilon}^{-4} \frac{\partial }{\partial \nu} |Z|^2 \, dA_{g} \\
&= - \frac{1}{2} \int_X \langle \nabla_{\bg} \left( f_{\epsilon}^{-4} \right),  \nabla_{\bg} \left( f_{\epsilon}^6 \right) \rangle_{\bg} \, dv_{\bg} + \frac{1}{2} \oint_M f_{\epsilon}^{-4} \frac{\partial }{\partial \nu} |Z|^2 \, dA_{g} \\
&= 12 \int_X | \nabla_{\bg} f_{\epsilon} |^2 \, dv_{\bg} + \frac{1}{2} \oint_M f_{\epsilon}^{-4} \frac{\partial }{\partial \nu} |Z|^2 \, dA_g,
\end{split}
\end{align}
where $\nu$ is the outward normal to $M$ with respect to $g = \bg|_M$.

For the first term on the right-hand side of (\ref{WZ8}), we use the refined Kato inequality in (\ref{Kato}):  
\begin{align} \label{WZ10}
\int_X f_{\epsilon}^{-4} |\nabla_{\bg} Z|^2 \, dv_{\bg} &\geq \frac{5}{3} \int_X f_{\epsilon}^{-4} \left( |\nabla_{\bg} |Z^{+}||^2 + |\nabla_{\bg} |Z^{-}||^2 \right) \, dv_{\bg}.
\end{align}

We claim that (away from the zero loci of $|Z^{\pm}|$)
\begin{align} \label{WZ11}
\frac{5}{3} f_{\epsilon}^{-4} \left( |\nabla_{\bg} |Z^{+}||^2 + |\nabla_{\bg} |Z^{-}||^2 \right)   \geq 15   |\nabla_{\bg} f|^2.
\end{align}
To see this, compute
\begin{align*}
\nabla_{\bg} f &= \frac{1}{6} \left( \epsilon + |Z|^2 \right)^{-5/6} \left( 2 |Z^{+}| \nabla_{\bg} |Z^{+}| + 2 |Z^{-}| \nabla_{\bg} |Z^{-}| \right) \\
&= \frac{1}{3} f^{-5} \left( |Z^{+}| \nabla_{\bg} |Z^{+}| + |Z^{-}| \nabla_{\bg} |Z^{-}| \right);
\end{align*}
hence,
\begin{align*}
|\nabla_{\bg} f|^2 = \frac{1}{9} f^{-10} \Big(  |Z^{+}|^2 |\nabla_{\bg} |Z^{+}||^2 + 2 |Z^{+}||Z^{-}|\langle \nabla_{\bg} |Z^{+}|, \nabla_{\bg} |Z^{-}| \rangle  +  |Z^{-}|^2 |\nabla_{\bg} |Z^{-}||^2 \Big).
\end{align*}
Therefore,
\begin{align*}
\frac{5}{3} & f_{\epsilon}^{-4} \left( |\nabla_{\bg} |Z^{+}||^2 + |\nabla_{\bg} |Z^{-}||^2 \right) - 15   |\nabla_{\bg} f|^2 \\
&= \frac{5}{3} f_{\epsilon}^{-10} \Big\{ f_{\epsilon}^6 \left( |\nabla_{\bg} |Z^{+}||^2 + |\nabla_{\bg} |Z^{-}||^2 \right) - \Big(  |Z^{+}|^2 |\nabla_{\bg} |Z^{+}||^2 \\
& \ \ \ \  + 2 |Z^{+}||Z^{-}|\langle \nabla_{\bg} |Z^{+}|, \nabla_{\bg} |Z^{-}| \rangle  +  |Z^{-}|^2 |\nabla_{\bg} |Z^{-}||^2 \Big) \Big\} \\
&\geq \frac{5}{3} f_{\epsilon}^{-10} \Big\{ \left( |Z^{+}|^2 + |Z^{-}|^2 \right)   \left( |\nabla_{\bg} |Z^{+}||^2 + |\nabla_{\bg} |Z^{-}||^2 \right) - \Big(  |Z^{+}|^2 |\nabla_{\bg} |Z^{+}||^2 \\
& \ \ \ \  + 2 |Z^{+}||Z^{-}|\langle \nabla_{\bg} |Z^{+}|, \nabla_{\bg} |Z^{-}| \rangle  +  |Z^{-}|^2 |\nabla_{\bg} |Z^{-}||^2 \Big) \Big\} \\
&=  \frac{5}{3} f_{\epsilon}^{-10} \Big| |Z^{+} \nabla_{\bg} |Z^{-}| - |Z^{-}| \nabla_{\bg} |Z^{+}| \Big|^2 \\
&\geq 0,
\end{align*}
as claimed.

Substituting (\ref{WZ11}) into (\ref{WZ10}) and combining with (\ref{WZ8}) and (\ref{WZ9}) we obtain
\begin{align} \label{WZ13}
\frac{1}{2} \oint_M f_{\epsilon}^{-4} \frac{\partial }{\partial \nu} |Z|^2 \, dA_{\bg} \geq 3  \int_X |\nabla_{\bg} f_{\epsilon}|^2 \, dv_{\bg} + \frac{1}{2} \int_X \left( R_{\bg} - 2 \sqrt{6} |W_{\bg}| \right)  f_{\epsilon}^{-4} |Z|^2 \, dv_{\bg}.
\end{align}
We can rewrite the last term above as
\begin{align*}
\frac{1}{2} \int_X \left( R_{\bg} - 2 \sqrt{6} |W_{\bg}| \right)& f_{\epsilon}^{-4} |Z|^2 \, dv_{\bg} = \frac{1}{2} \int_X \left( R_{\bg} - 2 \sqrt{6} |W_{\bg}| \right) f_{\epsilon}^{-4} \left( f_{\epsilon}^6 - \epsilon \right) \, dv_{\bg} \\
&= \frac{1}{2} \int_X \left( R_{\bg} - 2 \sqrt{6} |W_{\bg}| \right) f_{\epsilon}^2 \, dv_{\bg} - \frac{1}{2} \epsilon \int_X \left( R_{\bg} - 2 \sqrt{6} |W_{\bg}| \right) f_{\epsilon}^{-2} \, dv_{\bg}.
\end{align*}
Since $f_{\epsilon} \geq \epsilon^{1/6}$,
\begin{align*}
- \frac{1}{2} \epsilon \int_X \left( R_{\bg} - 2 \sqrt{6} |W_{\bg}| \right) f_{\epsilon}^{-2} \, dv_{\bg} = O(\epsilon^{2/3}).
\end{align*}
Consequently, (\ref{WZ13}) implies
\begin{align} \label{WZ14} \begin{split}
3  \int_X |\nabla_{\bg} f_{\epsilon}|^2 \, dv_{\bg} + & \frac{1}{2} \int_X R_{\bg}   f_{\epsilon}^2 \, dv_g \\
&\leq \sqrt{6} \int_X |W_{\bg}|  f_{\epsilon}^2 \, dv_{\bg} + \frac{1}{2} \oint_M f_{\epsilon}^{-4} \frac{\partial }{\partial \nu} |Z|^2 \, dA_{\bg} + O(\epsilon^{2/3}).
\end{split}
\end{align}

It follows from the Fefferman-Graham expansions that the mean curvature of $M$ with respect to $g = \bg|_M$ vanishes.  Therefore, dividing (\ref{WZ14}) by $3$ we get
\begin{align} \label{WZ15}
\mathcal{E}_{\bg}[ f_{\epsilon} ] \leq \frac{\sqrt{6}}{3} \int_X |W_{\bg}|  f_{\epsilon}^2 \, dv_{\bg} +\frac{1}{6} \oint_M f_{\epsilon}^{-4} \frac{\partial }{\partial \nu} |Z|^2 \, dA_{\wg}  + O(\epsilon^{2/3}),
\end{align}
where $\mathcal{E}$ is defined in (\ref{E}).

To estimate the right-hand side of (\ref{WZ15}), we first observe that
\begin{align} \label{15p}
\frac{\sqrt{6}}{3} \int_X |W_{\bg}|  f_{\epsilon}^2 \, dv_{\bg} \leq \frac{\sqrt{6}}{3} \left( \int_X |W_{\bg}|^2 \, dv_{\bg} \right)^{1/2} \left(\int_X f_{\epsilon}^4 \, dv_{\bg} \right)^{1/2}
\end{align}

\begin{claim} \label{bClaim} The boundary integrand in (\ref{WZ15}) vanishes:
\begin{align} \label{NZ}
\frac{\partial }{\partial \nu} |Z|^2 = 0.
\end{align}
\end{claim}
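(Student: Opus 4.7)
The plan is to parallel the derivation of Claim \ref{EClaim} from the self-dual proof, while exploiting the extra vanishing provided by the evenness hypothesis. By the conformal transformation law for the $(0,4)$-Weyl tensor applied to $\bg = r^2 g_+$, one has $Z = r\, W_{g_+} = r^{-1} W_{\bg}$ as $(0,4)$-tensors, and therefore
\begin{align*}
|Z|^2 \;=\; r^{-2}|W_{\bg}|^2_{\bg}.
\end{align*}

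The first step is to invoke a boundary expansion for $|W_{\bg}|^2_{\bg}$ in powers of $r$, the full-Weyl analogue of Proposition \ref{WnormProp}; this is exactly the asymptotic analysis carried out in Sections \ref{Expansions1} and \ref{Expansions2}, applied here to the sum $|W^+_{\bg}|^2_{\bg} + |W^-_{\bg}|^2_{\bg}$. Schematically the expansion should read
\begin{align*}
|W_{\bg}|^2_{\bg} \;=\; A\, r^2 + B\, r^3 + O(r^4),
\end{align*}
where $A$ is a local tensorial quantity depending only on the boundary metric $g$ (in the self-dual case recorded in Proposition \ref{WnormProp} one has $A = |\mathcal{C}|^2$), and $B$ depends linearly on the Fefferman--Graham coefficient $g^{(3)}$ (in the self-dual case $B = 4\langle V, \mathcal{C}\rangle$).

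The next step is a parity argument to show $B = 0$. Under the hypothesis of Definition \ref{EvenDef}, $g^{(3)} = 0$, and as noted in the paragraph following that definition this forces $g^{(2k+1)} = 0$ for all $k \geq 0$, so that $h_r$ in the Fefferman--Graham expansion (\ref{FG2}) is an even function of $r$. Consequently $\bg = dr^2 + h_r$ is formally invariant under the reflection $r \mapsto -r$ across $M$. Since $|W_{\bg}|^2_{\bg}$ is a diffeomorphism invariant scalar of $\bg$, it inherits this symmetry and is an even function of $r$ near $M$. In particular every odd-order $r$-derivative at $r = 0$ vanishes, which kills the $r^3$ coefficient $B$ in the expansion above.

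Substituting $B = 0$ into the expansion of $|W_{\bg}|^2_{\bg}$ gives $|Z|^2 = A + O(r^2)$, and since $\nu = -\partial_r|_M$ for the special defining function, we conclude
\begin{align*}
\frac{\partial}{\partial \nu}|Z|^2\bigg|_M \;=\; -\frac{\partial}{\partial r}|Z|^2\bigg|_{r=0} \;=\; 0,
\end{align*}
which is the claim. The main technical hurdle is producing the expansion of $|W_{\bg}|^2_{\bg}$ to order $r^3$ at the boundary with an explicit identification of the $r^3$ coefficient as a linear expression in $g^{(3)}$; once that is available (as it will be via Sections \ref{Expansions1}--\ref{Expansions2}), the evenness assumption eliminates that term either by direct substitution of $g^{(3)} = 0$ or by the parity argument above.
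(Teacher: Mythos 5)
Your proposal is correct, and its main line is the same as the paper's: write $|Z|^2 = r^{-2}|W_{\bg}|^2_{\bg}$, expand $|W_{\bg}|^2_{\bg}$ at the boundary, observe that the $r^3$ coefficient is killed by $g^{(3)}=0$, and differentiate in $-\partial_r$. The paper implements the "direct substitution" branch of your argument: it sets $g^{(3)}=0$ in the componentwise expansions of Proposition \ref{Wexp} to get Corollary \ref{EvenCor}, whose formula (\ref{W2g3}), $|\bW|^2_{\bg} = r^2|\mathcal{C}|^2 + O(r^4)$, is exactly your expansion with $B=0$ and $A=|\mathcal{C}|^2$. Your alternative parity argument is a genuinely different, and in some ways cleaner, way to see $B=0$: since evenness forces all odd Fefferman--Graham coefficients to vanish, the jets of $h_r$ at $r=0$ are unchanged under $r\mapsto -r$, so the coefficient of $r^3$ in the scalar $|W_{\bg}|^2_{\bg}$ --- a universal polynomial in those jets that changes sign under the formal reflection --- must vanish. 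This bypasses any explicit identification of the $r^3$ coefficient and would apply verbatim to any scalar Riemannian invariant of $\bg$, whereas the paper's computation has the advantage of also producing the leading coefficient $|\mathcal{C}|^2$, which is used elsewhere in the proof of Proposition \ref{signProp1} (to get the lower bound $\int_X f_\epsilon^4\,dv_{\bg} \geq c_0 > 0$ when $g_+$ is not hyperbolic). Either branch of your argument closes the claim.
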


Assuming the claim for now, it follows from (\ref{WZ15}) and (\ref{15p}) that
\begin{align} \label{WZ151} \begin{split}
\mathcal{E}_{\bg}[ f_{\epsilon} ] &\leq \frac{\sqrt{6}}{3}  \left( \int_X |W_{\bg}|^2 \, dv_{\bg} \right)^{1/2} \left(\int_X f_{\epsilon}^4 \, dv_{\bg} \right)^{1/2} + O(\epsilon^{2/3}) \\
&= \frac{\sqrt{6}}{3}  \left( \int_X |W_{g_{+}}|^2 \, dv_{g_{+}} \right)^{1/2} \left(\int_X f_{\epsilon}^4 \, dv_{\bg} \right)^{1/2} + O(\epsilon^{2/3}),
\end{split}
\end{align}
where the second line follows by the conformal invariance of the $L^2$-norm of the Weyl tensor.  By the definition of $Z$,
\begin{align*}
\int_X f_{\epsilon}^4 \, dv_{\bg} \geq \int_X |Z|^{4/3} \, dv_{\bg} = \int r^4 |W_{\bg}|_{\bg}^{4/3} \, dv_{\bg}.
\end{align*}
It follows that if $(X,g_{+})$ is not hyperbolic, then there is a $c_0 > 0$ such that
 \begin{align*}
\left( \int_X f_{\epsilon}^4 \, dv_{\bg} \right)^{1/2} \geq c_0 > 0,
\end{align*}
independent of $\epsilon$.  Dividing (\ref{WZ151}) by $\left( \int_X f_{\epsilon}^4 \, dv_{\bg} \right)^{1/2}$, we get
\begin{align} \label{WZ152}
\mathcal{Y}_{\bg}[ f_{\epsilon} ] \leq \frac{\sqrt{6}}{3}  \left( \int_X |W_{g_{+}}|^2 \, dv_{g_{+}} \right)^{1/2} + O(\epsilon^{2/3}),
\end{align}
where $\mathcal{Y}_{\bg}$ is the Yamabe functional associated to $\bg$ (see (\ref{YE}). Letting $\epsilon \to 0$, (\ref{1stYamabeEst}) follows.

\end{proof}

\begin{proof}[The proof of Claim \ref{bClaim}] By (\ref{ZZ}) and the conformal transformation law for the Weyl tensor,
\begin{align*}
|Z|^2 &= |Z|_{\bg}^2 \\
 &= \big| r W_{g_{+}} \big|_{\bg}^2 \\
&= r^2 \big| W_{g_{+}} \big|_{\bg}^2 \\
&= r^2 \big| r^{-2} W_{\bg} \big|_{\bg}^2 \\
&= r^{-2} | W_{\bg} |^2_{\bg}.
\end{align*}
By (\ref{W2g3}) of Corollary \ref{EvenCor},
\begin{align*}
|Z|^2 &= r^{-2} \big\{  r^2 |\mathcal{C} |^2 + O(r^4) \big\} \\
&= |\mathcal{C} |^2  + O(r^2),
\end{align*}
where $\mathcal{C}$ is with respect to $g = \bg|_M$.  Since $r$ is a special defining function, $\nu = - \frac{\partial}{\partial r}\vert_M$, hence
\begin{align} \label{dZM2} \begin{split}
\frac{\partial }{\partial \nu} |Z|^2 &= - \frac{\partial}{\partial r}  \big\{ |\mathcal{C}|^2  + O(r^2) \big\} \big|_{r=0}\\
&= 0,
\end{split}
\end{align}
as claimed.  \end{proof}

\bigskip

\section{Expansions for four-dimensional Poincar\'e-Einstein metrics}  \label{Expansions1}

In this section we calculate the expansion of the Weyl tensor that was needed in the proofs of the main results.  Let $(X^4, g_{+})$ be an oriented four-dimensional Poincar\'e-Einstein manifold with conformal infinity $(M,[\wg])$, where $M = \partial X$.  We fix a representative of the conformal infinity which we also denote by $\wg$, and let $r > 0$ denote a special defining function associated to $\wg$.  Let $\bg = r^2 g_{+}$.

In this section we closely follow the conventions and notation of Chapter 5 of \cite{FG}.   If $\{ \partial_1, \partial_2, \partial_3 \}$ is an oriented local basis of coordinate vector fields on $M$, then $\{ \partial_0, \partial_1, \partial_2, \partial_3 \}$ is an oriented local basis for a neighborhood in $X$.  We use lower case Greek indices when labeling components of tensor fields on $X$ (e.g., $0 \leq \alpha \leq 3$), and lower case Latin indices for tensor fields on $M$ (e.g., $1 \leq i \leq 3$).  The $0$-index always corresponds to $\partial_r$.

By the Fefferman-Graham expansions,
\begin{align} \label{gr}
g_{ij} = \wg_{ij} - r^2 \wP_{ij} + r^3 g^{(3)}_{ij} + r^4 g^{(4)}_{ij} + O(r^5),
\end{align}
where $\wP$ is the Schouten tensor with respect to $\wg$.  This implies the following expansions for the Christoffel symbols:
\begin{align} \label{bG} \begin{split}
\bar{\Gamma}_{ij}^k &= \widehat{\Gamma}_{ij}^k - \frac{1}{2} r^2 \big[ \wD_i \wP_j^k + \wD_j \wP_i^k - \wD^k \wP_{ij} \big] + O(r^3), \\
\bar{\Gamma}_{ij}^0 &= r \wP_{ij} - \frac{3}{2} r^2 g^{(3)}_{ij} - 2 r^3 g^{(4)}_{ij} + O(r^4), \\
\bar{\Gamma}_{i0}^k &= - r \wP_i^k + \frac{3}{2} r^2 (g^{(3)})_i^k + r^3 \big[ 2 (g^{(4)})_i^k - (\wP^2)_i^k \big] + O(r^4), \\
\bar{\Gamma}_{0j}^k &= \bar{\Gamma}_{00}^k = \bar{\Gamma}_{00}^0 = 0.
\end{split}
\end{align}
where $\wD$ is the connection with respect to $\wg$.  Using these formulas, and the formulas from page 48 of \cite{FG}, we conclude
\begin{align} \label{bR} \begin{split}
\bR_{ijk\ell} &= \widehat{R}_{ijk\ell} + \frac{1}{2}r^2 \Big[ \widehat{\nabla}_j C_{i k \ell} - \widehat{\nabla}_{i} C_{j k \ell} - \wg_{ik} (\wP^2)_{j \ell} + \wg_{i \ell} (\wP^2)_{j k}  \\
& \ \ \ \ \ + \wg_{j k} (\wP^2)_{i \ell} - \wg_{j \ell} (\wP^2)_{ik} - 4 \wP_{ik} \wP_{j \ell} + 4 \wP_{jk} \wP_{i\ell} \Big] + O(r^3), \\
\bR_{i 0 j 0} &= \wP_{ij} - 3r g^{(3)}_{ij} + r^2 \big[ (\wP^2)_{ij} - 6 g^{(4)}_{ij} \big] + O(r^3), \\
\bR_{0ijk} &= - r C_{ijk} + \frac{3}{2} r^2 \big[ \wD_k g^{(3)}_{ij} - \wD_j g^{(3)}_{ik} \big] + O(r^3),
\end{split}
\end{align}
where $C$ is the Cotton tensor with respect $\wg$:
\begin{align*}
C_{ijk} = \wD_k \wP_{ij} - \wD_j \wP_{ik}.
\end{align*}

\subsection{Expansion of the Weyl tensor}   To obtain expansions for the Weyl tensor, we first need expansions of the Schouten tensor $\overline{P}$ with respect to $\bg$.  Since $g_{+} = r^{-2} \bg$, the conformal transformation law for the Schouten tensor implies
\begin{align} \label{Pchange}
P_{g_{+}} = \bar{P} + \dfrac{\bar{\nabla}^2 r}{r} - \frac{1}{2} \dfrac{|\bar{\nabla} r|^2}{r^2} \bg.
\end{align}
By the Einstein condition and the fact that $r$ is a special defining function, this gives
\begin{align} \label{Pc}
\overline{P} =  -\dfrac{\bar{\nabla}^2 r}{r}.
\end{align}
Using the formulas for the Christoffel symbols in (\ref{bG}), we get
\begin{align} \label{bP}  \begin{split}
\overline{P}_{ij} &= \wP_{ij} - \frac{3}{2} r g^{(3)}_{ij} - 2 r^2 g^{(4)}_{ij} + O(r^3), \\
\overline{P}_{i0} &= 0,\\
\overline{P}_{00} &= 0.
\end{split}
\end{align}

By the standard decomposition of the curvature tensor,
\begin{align*}
\bW_{\alpha \beta \gamma \delta} = \bR_{\alpha \beta \gamma \delta} - \bg_{\alpha \gamma} \overline{P}_{\beta \delta} + \bar{g}_{\alpha \delta} \overline{P}_{\beta \gamma} + \bar{g}_{\beta \gamma} \overline{P}_{\alpha \delta} - \bar{g}_{\beta \delta} \overline{P}_{\alpha \gamma}.
\end{align*}
Using the formulas in (\ref{bR}) and (\ref{bP}), this implies
\begin{align} \label{bW} \begin{split}
\bW_{ijk\ell} &= \frac{3}{2} r \big[ \wg_{ik} g^{(3)}_{j\ell} - \wg_{i\ell} g^{(3)}_{jk} - \wg_{jk} g^{(3)}_{i \ell} + \wg_{j \ell} g^{(3)}_{ik} \big] + r^2 V_{ijk\ell} + O(r^3), \\
\bW_{i 0 j 0} &= -\frac{3}{2} r g^{(3)}_{ij} + r^2 \big[ (\wP)^2_{ij} - 4 g^{(4)}_{ij} \big] + O(r^3), \\
\bW_{0ijk} &= - r C_{ijk} + \frac{3}{2} r^2 \big[ \wD_k g^{(3)}_{ij} - \wD_j g^{(3)}_{ik} \big] + O(r^3),
\end{split}
\end{align}
where for notational convenience we define
\begin{align} \label{V4} \begin{split}
V_{ijk\ell} &= \frac{1}{2}\big( \wD_j C_{ik\ell} - \wD_i C_{jk\ell} \big) - \frac{1}{2} \wg_{ik} (\wP^2)_{j \ell} + \frac{1}{2} \wg_{i\ell} (\wP^2)_{jk} + \frac{1}{2} \wg_{jk} (\wP^2)_{i \ell} - \frac{1}{2} \wg_{j\ell} (\wP^2)_{ik}\\
& \ \ \ \ \ \ + 2 \big[ \wg_{ik} g^{(4)}_{j\ell} - \wg_{i\ell} g^{(4)}_{jk} - \wg_{jk} g^{(4)}_{i \ell} + \wg_{j \ell} g^{(4)}_{ik} \big].
\end{split}
\end{align}
Note that the formula for $\bW_{ijk\ell}$ implies that $V$ is a curvature-type tensor on $M$.

\smallskip
We can use the above expansions to explicitly compute $g^{(4)}$:

\begin{proposition} \label{g4Prop} Let $(X,g_{+})$ be a four-dimensional Poincar\'e-Einstein manifold, and $\wg$ a representative in the conformal infinity with associated special defining function $r$.  Then
\begin{align} \label{g4}
g^{(4)}_{ij} = \frac{1}{4} \Big( \wD^k C_{ijk} +  (\wP^2)_{ij} \Big).
\end{align}
\end{proposition}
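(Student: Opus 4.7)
The plan is to extract $g^{(4)}$ from the trace-free property of the ambient Weyl tensor $\bW$, applied to the expansion (\ref{bW}). Because $r$ is a special defining function we have $\bg^{00}=1$ and $\bg^{0i}=0$, so the identity $\bg^{\alpha\beta}\bW_{i\alpha j\beta}=0$ reduces to the tangential equation
\begin{align*}
\bW_{i0j0} + \bg^{k\ell}\bW_{ikj\ell} = 0.
\end{align*}
I would substitute both expansions from (\ref{bW}) and equate coefficients order by order. Since the leading term of $\bW_{ikj\ell}$ is already $O(r)$, the difference $\bg^{k\ell}-\wg^{k\ell}=O(r^2)$ only contributes at $O(r^3)$ and can be ignored. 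The $O(r)$ coefficient collapses to $\wg_{ij}\,\tr_{\wg} g^{(3)}=0$, recovering the standard tracelessness of $g^{(3)}$; the $O(r^2)$ coefficient is
\begin{align*}
4 g^{(4)}_{ij} = (\wP^2)_{ij} + \wg^{k\ell}V_{ikj\ell},
\end{align*}
where $V$ is the tensor defined in (\ref{V4}).

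The next step is to compute the trace $\wg^{k\ell}V_{ikj\ell}$ term by term. Two symmetry facts keep this manageable: the Cotton tensor is symmetric and trace-free in its first two indices and antisymmetric in its last two. Consequently $\wg^{k\ell}C_{kj\ell}=-\wg^{k\ell}C_{k\ell j}=0$, so only the divergence $\wD^k C_{ijk}$ survives from the derivative part of $V$; the four algebraic $\wP^2$ terms collapse to $-\tfrac12\wg_{ij}|\wP|^2-\tfrac12(\wP^2)_{ij}$; and the four $g^{(4)}$ terms in $V$ trace to $2 g^{(4)}_{ij} + 2\wg_{ij}\tr_{\wg} g^{(4)}$. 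Assembling everything and isolating $g^{(4)}_{ij}$ yields
\begin{align*}
g^{(4)}_{ij} = \tfrac14 (\wP^2)_{ij} + \tfrac14 \wD^k C_{ijk} + \wg_{ij}\!\left(\tr_{\wg} g^{(4)} - \tfrac14|\wP|^2\right).
\end{align*}

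Finally I would determine $\tr_{\wg} g^{(4)}$ from the complementary contraction of the Weyl identity: $\bg^{\alpha\beta}\bW_{\alpha 0\beta 0}=0$ reduces to $\bg^{ij}\bW_{i0j0}=0$. Expanding via (\ref{bW}) and using $\tr_{\wg} g^{(3)}=0$, the $O(r^2)$ coefficient yields $\tr_{\wg} g^{(4)} = \tfrac14|\wP|^2$ immediately, so the pure-trace correction in the previous display cancels and (\ref{g4}) follows. The main obstacle is the combinatorics of the trace $\wg^{k\ell}V_{ikj\ell}$: because $V$ itself contains $g^{(4)}$, the relation for $g^{(4)}$ is implicit, and one genuinely needs the independent identification of $\tr_{\wg} g^{(4)}$ to close it.
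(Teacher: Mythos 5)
Your argument is correct and is essentially the paper's proof: both extract the relation $4g^{(4)}_{ij} = (\wP^2)_{ij} + \wg^{k\ell}V_{ikj\ell}$ from the trace-freeness of $\bW$ applied to the expansion (\ref{bW}), and both close the resulting implicit equation by identifying $\tr_{\wg} g^{(4)} = \tfrac{1}{4}|\wP|^2$. The only (harmless) differences are that you obtain this trace from the separate identity $\bg^{ij}\bW_{i0j0}=0$ rather than by tracing the tangential relation as the paper does, and that the Cotton tensor is not in fact symmetric in its first two indices --- though the trace $\wg^{k\ell}C_{kj\ell}$ you need to kill does vanish, by the contracted second Bianchi identity.
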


\begin{proof} On $M$, we define the tensor
\begin{align} \label{V2sec} \begin{split}
V_{ij} &= \wg^{k \ell} V_{ikj\ell} \\
&= \frac{1}{2} \wD^k C_{ijk} - \frac{1}{2} (\wP^2)_{ij} - \frac{1}{2} |\wP|^2 \wg_{ij}  + 2 \big( g^{(4)}_{ij} + \mbox{tr}_{\wg} \, g^{(4)} \, \wg_{ij} \big).
\end{split}
\end{align}
Since $\bW$ and $g^{(3)}$ are trace-free,
\begin{align*}
0 &= \bg^{\alpha \beta} \bW_{i \alpha j \beta} \\
&= \bg^{00} \bW_{i0j0} + \bg^{k \ell} \bW_{ikj\ell} \\
&= \Big\{  -\frac{3}{2} r g^{(3)}_{ij} + r^2 \big[ (\wP)^2_{ij} - 4 g^{(4)}_{ij} \big]+ O(r^3) \Big\}   \\
& \ \ \ \ \ \ + \big\{ \wg^{k\ell} + O(r^2) \big\} \Big\{ \frac{3}{2} r \big[ \wg_{ij} g^{(3)}_{k\ell} - \wg_{i\ell} g^{(3)}_{jk} - \wg_{jk} g^{(3)}_{i \ell} + \wg_{k \ell} g^{(3)}_{ij} \big] + r^2 V_{ikj\ell} + O(r^3) \Big\} \\
&= r^2 \big[ \wP^2_{ij} - 4 g^{(4)}_{ij} + V_{ij} \big] + O(r^3),
\end{align*}
hence
\begin{align} \label{g4V}
4 g^{(4)}_{ij} = \wP^2_{ij} + V_{ij}.
\end{align}
Taking the trace gives
\begin{align*}
4 \mbox{tr}_{\wg} \, g^{(4)} &= |\wP|^2 + \mbox{tr}_{\wg} \, V \\
&= |\wP|^2 + \big( - 2 |\wP|^2  + 8  \mbox{tr}_{\wg} \, g^{(4)} \big) \\
&= - |\wP|^2  + 8  \mbox{tr}_{\wg} \, g^{(4)},
\end{align*}
so that
\begin{align} \label{tg4}
\mbox{tr}_{\wg} \, g^{(4)} = \frac{1}{4}  |\wP|^2.
\end{align}
Returning to (\ref{g4V}), we have
\begin{align} \label{g4V2} \begin{split}
4 g^{(4)}_{ij} &= \wP^2_{ij} + V_{ij} \\
&= \wP^2_{ij} + \frac{1}{2} \wD^k C_{ijk} - \frac{1}{2} (\wP^2)_{ij} - \frac{1}{2} |\wP|^2 \wg_{ij} + 2 \big( g^{(4)}_{ij} + \mbox{tr}_{\wg} \, g^{(4)} \, \wg_{ij} \big) \\
&= \wP^2_{ij} + \frac{1}{2} \wD^k C_{ijk} - \frac{1}{2} (\wP^2)_{ij} - \frac{1}{2} |\wP|^2 \wg_{ij}  + 2  g^{(4)}_{ij} + 2 \big(  \frac{1}{4} |\wP|^2   \big) \wg_{ij}  \\
&= \frac{1}{2} \wD^k C_{ijk} + \frac{1}{2} (\wP^2)_{ij} + 2  g^{(4)}_{ij},
\end{split}
\end{align}
and (\ref{g4}) follows.

\end{proof}

\medskip

\begin{lemma} \label{VLemma}  We have
\begin{align} \label{V4a}  
V_{ijk\ell} =  \wg_{ik}  \wD^m C_{j\ell m}  - \wg_{i \ell}  \wD^m C_{j k m}  - \wg_{jk}  \wD^m C_{i \ell m} + \wg_{j \ell}   \wD^m C_{i k m},
\end{align}
and
\begin{align} \label{V2a}
V_{ij} = \wD^k C_{ijk}.
\end{align}
\end{lemma}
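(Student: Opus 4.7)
The plan is to prove both identities by substituting the explicit formula for $g^{(4)}$ from Proposition \ref{g4Prop} and then exploiting the vanishing of the Weyl tensor in dimension three via the second Bianchi identity.

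First I handle (\ref{V2a}), which is essentially immediate. Substitute $g^{(4)}_{ij} = \tfrac{1}{4}(\wD^k C_{ijk} + (\wP^2)_{ij})$ and $\tr_{\wg}g^{(4)} = \tfrac{1}{4}|\wP|^2$ into the second formula of (\ref{V2sec}). The $(\wP^2)_{ij}$ and $|\wP|^2 \wg_{ij}$ terms cancel in pairs, and the two copies of $\tfrac{1}{2}\wD^k C_{ijk}$ combine to give $V_{ij} = \wD^k C_{ijk}$.

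For (\ref{V4a}), first substitute the same formula for $g^{(4)}$ into (\ref{V4}). A direct check shows that the four $(\wP^2)$ terms from the definition of $V_{ijk\ell}$ exactly cancel the four $(\wP^2)$ contributions coming from the $g^{(4)}$ substitution. What remains is
\begin{equation*}
V_{ijk\ell} = \tfrac{1}{2}\bigl(\wD_j C_{ik\ell} - \wD_i C_{jk\ell}\bigr) + \tfrac{1}{2}\bigl[\wg_{ik}\wD^m C_{j\ell m} - \wg_{i\ell}\wD^m C_{jkm} - \wg_{jk}\wD^m C_{i\ell m} + \wg_{j\ell}\wD^m C_{ikm}\bigr].
\end{equation*}
So it suffices to show that the first parenthesized expression equals the second.

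This is where I use $\dim M = 3$. Since $W_{\wg} = 0$, the Riemann tensor of $\wg$ is given by the Kulkarni--Nomizu product $R_{ijk\ell} = \wP_{ik}\wg_{j\ell} - \wP_{i\ell}\wg_{jk} + \wg_{ik}\wP_{j\ell} - \wg_{i\ell}\wP_{jk}$. Plugging this into the second Bianchi identity $\wD_m R_{ijk\ell} + \wD_k R_{ij\ell m} + \wD_\ell R_{ijmk} = 0$, grouping by the six metric factors $\wg_{j\ell}, \wg_{jk}, \wg_{jm}, \wg_{ik}, \wg_{i\ell}, \wg_{im}$, and rewriting each coefficient as a component of the Cotton tensor via $C_{abc} = \wD_c \wP_{ab} - \wD_b \wP_{ac}$, yields the purely algebraic identity
\begin{equation*}
\wg_{j\ell} C_{imk} + \wg_{jk} C_{i\ell m} + \wg_{jm} C_{ik\ell} + \wg_{ik} C_{jm\ell} + \wg_{i\ell} C_{jkm} + \wg_{im} C_{j\ell k} = 0.
\end{equation*}
Now apply $\wD^m$. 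The terms with $\wg_{jm}$ and $\wg_{im}$ collapse to $\wD_j C_{ik\ell}$ and $\wD_i C_{j\ell k} = -\wD_i C_{jk\ell}$ respectively, while the remaining four terms become the divergence quantities $\wg_{j\ell}\wD^m C_{imk}$, etc. Applying the antisymmetry $C_{abm} = -C_{amb}$ to rewrite each such term in the form $\wD^m C_{\cdot\cdot m}$ produces exactly the identity
\begin{equation*}
\wD_j C_{ik\ell} - \wD_i C_{jk\ell} = \wg_{ik}\wD^m C_{j\ell m} - \wg_{i\ell}\wD^m C_{jkm} - \wg_{jk}\wD^m C_{i\ell m} + \wg_{j\ell}\wD^m C_{ikm}.
\end{equation*}
Substituting this into the formula for $V_{ijk\ell}$ above, the prefactor $\tfrac{1}{2}$ doubles up and gives (\ref{V4a}).

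The main obstacle is simply bookkeeping: there are many terms with similar structure, and keeping the signs straight when using $C_{ijk} = -C_{ikj}$ together with the relation $C_{ijk} = \wD_k \wP_{ij} - \wD_j \wP_{ik}$ requires care. No term requires the contracted Bianchi identity $\wg^{jk}C_{ijk} = 0$ for this argument, but that identity appears implicitly when verifying consistency.
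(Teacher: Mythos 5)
Your proof is correct, but for (\ref{V4a}) it takes a genuinely different route from the paper's. The paper argues pointwise-algebraically: because $V_{ijk\ell}$ appears as the $r^2$-coefficient of the tangential components of $\bW$, it has the symmetries of an algebraic curvature tensor on the three-dimensional boundary, so its totally trace-free (Weyl) part vanishes identically; together with $\operatorname{tr}_{\wg}V=\wg^{ij}\wD^kC_{ijk}=0$, this forces $V_{ijk\ell}$ to equal the Kulkarni--Nomizu-type expression built from $V_{ij}$, which is exactly (\ref{V4a}). You instead substitute the formula for $g^{(4)}$ from Proposition \ref{g4Prop} into (\ref{V4}) (the $(\wP^2)$ terms do cancel as you claim) and then establish the identity
\begin{equation*}
\wD_j C_{ik\ell} - \wD_i C_{jk\ell} = \wg_{ik}\wD^m C_{j\ell m} - \wg_{i\ell}\wD^m C_{jkm} - \wg_{jk}\wD^m C_{i\ell m} + \wg_{j\ell}\wD^m C_{ikm}
\end{equation*}
by taking the divergence of the second Bianchi identity of $\wg$ using $W_{\wg}=0$; I checked the signs in the grouping-by-metric-factors step and in the divergence step, and the argument is right. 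Your route is more computational but entirely self-contained, and it makes explicit where three-dimensionality enters (through $W_{\wg}=0$); the paper's route is shorter but requires one to observe that $V$ is curvature-type and that its trace vanishes. For (\ref{V2a}) the two arguments essentially coincide. One minor quibble: the trace of the Cotton tensor that vanishes for a nontrivial reason (the contracted second Bianchi identity) is $\wg^{ij}C_{ijk}$, whereas $\wg^{jk}C_{ijk}=0$ holds trivially by antisymmetry; your closing remark has these roles reversed, but nothing in the proof depends on it.
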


\begin{proof} Note that (\ref{V2a}) is immediate from (\ref{g4V}) and Proposition \ref{g4Prop}.

Since $V_{ijk\ell}$ is a curvature-type tensor defined on a three-dimensional vector space, its fully trace-free part must vanish:
\begin{align*}
Z_{ijk\ell} &:= V_{ijk\ell} - \big( \wg_{ik} V_{j\ell} - \wg_{i\ell} V_{jk} - \wg_{jk} V_{i\ell} + \wg_{j \ell} V_{ik} \big) + \frac{1}{2}  \mbox{tr}_{\wg} \, V \, \big( \wg_{ik} \wg_{j\ell} - \wg_{i\ell} \wg_{jk} \big) \\
&=0.
\end{align*}
Then (\ref{V4a}) follows from this fact and (\ref{V2a}).
\end{proof}
We thus obtain the following.
\begin{proposition} \label{Wexp}  Let $(X,g_{+})$ be a four-dimensional Poincar\'e-Einstein manifold, and $\wg$ a representative in the conformal infinity with associated special defining function $r$.  Then
\begin{align} \label{bW2} \begin{split}
\bW_{ijk\ell} &= \frac{3}{2} r \big[ \wg_{ik} g^{(3)}_{j\ell} - \wg_{i\ell} g^{(3)}_{jk} - \wg_{jk} g^{(3)}_{i \ell} + \wg_{j \ell} g^{(3)}_{ik} \big] + r^2 V_{ijk\ell} + O(r^3), \\
\bW_{i 0 j 0} &= -\frac{3}{2} r g^{(3)}_{ij} -  r^2 V_{ij}  + O(r^3), \\
\bW_{0ijk} &= - r C_{ijk} + \frac{3}{2} r^2 \big[ \wD_k g^{(3)}_{ij} - \wD_j g^{(3)}_{ik} \big] + O(r^3),
\end{split}
\end{align}
where $V_{ijk\ell}$ and $V_{ij}$ are given by (\ref{V4a}) and (\ref{V2a}).
\end{proposition}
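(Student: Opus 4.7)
My proposal is that Proposition \ref{Wexp} is essentially a repackaging of the expansions already derived in (\ref{bW}), once the formally undetermined tensor $g^{(4)}$ has been pinned down in terms of intrinsic boundary data. So the plan is to treat Proposition \ref{Wexp} as a direct corollary of Proposition \ref{g4Prop} and Lemma \ref{VLemma}, both of which are to be established just before it.

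First, I would observe that the formulas for $\bW_{ijk\ell}$ and $\bW_{0ijk}$ in Proposition \ref{Wexp} are identical to those in (\ref{bW}), so nothing new has to be proved for them except that the $V_{ijk\ell}$ appearing in the statement is the tensor (\ref{V4}) rewritten via (\ref{V4a}); this is exactly the content of Lemma \ref{VLemma}. The only genuinely new assertion is the middle formula
\begin{equation*}
\bW_{i0j0} = -\tfrac{3}{2} r \, g^{(3)}_{ij} - r^2 V_{ij} + O(r^3).
\end{equation*}
From (\ref{bW}) we already have $\bW_{i0j0} = -\tfrac{3}{2} r\, g^{(3)}_{ij} + r^2\bigl[(\wP^2)_{ij} - 4 g^{(4)}_{ij}\bigr] + O(r^3)$, so the task reduces to identifying the coefficient of $r^2$ as $-V_{ij}$.

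For this step I would invoke Proposition \ref{g4Prop}, which gives
\begin{equation*}
4 g^{(4)}_{ij} = (\wP^2)_{ij} + \wD^k C_{ijk},
\end{equation*}
and the identification $V_{ij} = \wD^k C_{ijk}$ from (\ref{V2a}) of Lemma \ref{VLemma}. Substituting, the coefficient of $r^2$ becomes $(\wP^2)_{ij} - (\wP^2)_{ij} - \wD^k C_{ijk} = -V_{ij}$, which is exactly what is claimed. Once the three pieces are assembled, the proposition is immediate.

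The main (non-routine) obstacle behind the scenes is therefore not the final assembly but the two supporting results. Proposition \ref{g4Prop} is obtained by tracing the already-derived expansion of $\bW$ against $\bg^{\alpha\beta}$ and exploiting that $\bW$ is totally trace-free; the delicate part is that one gets a linear equation mixing $g^{(4)}_{ij}$ with $\tr_{\wg} g^{(4)}\, \wg_{ij}$, so one must first take a further trace to solve for $\tr_{\wg}\, g^{(4)} = \tfrac14 |\wP|^2$ and then substitute back. Lemma \ref{VLemma} is then a consequence of the algebraic fact that any curvature-type tensor on a three-dimensional inner product space equals its Ricci-reconstruction; applied to $V_{ijk\ell}$ this turns the raw expression (\ref{V4}) into the compact form (\ref{V4a}) purely in terms of $\wD^m C$. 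Granting those two inputs, the proof of Proposition \ref{Wexp} is a one-line substitution.
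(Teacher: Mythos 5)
Your proposal is correct and matches the paper exactly: the paper introduces Proposition \ref{Wexp} with only the phrase ``We thus obtain the following'' after Lemma \ref{VLemma}, i.e.\ it is deduced, just as you describe, by substituting $4g^{(4)}_{ij} = (\wP^2)_{ij} + \wD^k C_{ijk}$ from Proposition \ref{g4Prop} into the $\bW_{i0j0}$ entry of (\ref{bW}) and reading off $V_{ijk\ell}$, $V_{ij}$ via Lemma \ref{VLemma}. No further comment is needed.
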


\medskip

\subsection{Expansions of $\bW^{\pm}$}  As in \cite{FG}, we denote the volume form of $g_r$ by $\mu$, and the volume form of $\bg$ by $\bm$.  Obviously
\begin{align*}
\bm_{ijk\ell} = 0,
\end{align*}
since the boundary is three-dimensional.  As pointed out in \cite{FG},
\begin{align} \label{mmb}
\bm_{0ijk} = \sqrt{\det g_r} \epsilon_{ijk}.
\end{align}

We begin with the expansion of $\bW^{+}_{ijk\ell}$ (i.e., all tangential components).  To do this, we first compute the expansion of $(\star \bW)_{ijk\ell}$:
\begin{align} \label{sWt} \begin{split}
(\star \bW)_{ijk\ell} &= \frac{1}{2} \bm_{ij}^{\ \ \rho \sigma} \bW_{\rho \sigma k \ell} \\
&= \frac{1}{2}\bm_{ij \nu \theta} \bg^{\nu \rho} \bg^{\theta \sigma} \bW_{\rho \sigma k \ell} \\
&=  \bm_{ijm0} \, \bg^{mp} \bW_{p0k\ell} \\
&= \sqrt{\det g_r} \, \bg^{mp}  \epsilon_{ijm} \bW_{0 pk\ell}.
\end{split}
\end{align}
From the expansions of $g_r$ above we know
\begin{align*}
\sqrt{\det g_r} \, \bg^{mp}\, \epsilon_{ijm} &= \big( 1 + O(r^2) \big) \big( \wg^{mp} + O(r^2) \big) \sqrt{\det \wg} \, \epsilon_{ijm}\\
&= \wg^{mp} \sqrt{\det \wg} \, \epsilon_{ijm} + O(r^2) \\
&= \hm_{ij}^{\ \ p} + O(r^2),
\end{align*}
where $\hm$ is the volume form of $\wg$.  Then using the expansion of $\bW$ in (\ref{bW}), we get
\begin{align} \label{stbWtan}
(\star \bW)_{ijk\ell} =   - r \, \hm_{ij}^{\ \ p} \, C_{pk\ell} + \frac{3}{2} r^2  \big( \wD_{\ell} g^{(3)}_{pk} - \wD_k g^{(3)}_{p \ell} \big) \hm_{ij}^{\ \ p} + O(r^3).
\end{align}
Combining with the first formula in (\ref{bW}) gives
\begin{align} \label{bWpt} \begin{split}
\bW^{+}_{ijk\ell} &= \frac{1}{2} \big( \bW_{ijk\ell} + (\star \bW)_{ijk\ell} \big) \\
&= r \Big\{ \frac{3}{4}  \big[ \wg_{ik} g^{(3)}_{j\ell} - \wg_{i\ell} g^{(3)}_{jk} - \wg_{jk} g^{(3)}_{i \ell} + \wg_{j \ell} g^{(3)}_{ik} \big]
-\frac{1}{2}  \, \hm_{ij}^{\ \ p} \, C_{pk\ell}  \Big\}  \\
& \ \ \ \ + r^2 \Big\{ \frac{1}{2}  V_{ijk\ell} + \frac{3}{4} \big( \wD_{\ell} g^{(3)}_{pk} - \wD_k g^{(3)}_{p \ell} \big) \hm_{ij}^{\ \ p}\Big\} + O(r^3).
\end{split}
\end{align}
Similarly,
\begin{align} \label{bWmt} \begin{split}
\bW^{-}_{ijk\ell} &=  \frac{1}{2} \big( \bW_{ijk\ell} - (\star \bW)_{ijk\ell} \big) \\
&= r \Big\{ \frac{3}{4}  \big[ \wg_{ik} g^{(3)}_{j\ell} - \wg_{i\ell} g^{(3)}_{jk} - \wg_{jk} g^{(3)}_{i \ell} + \wg_{j \ell} g^{(3)}_{ik} \big]
+\frac{1}{2}  \, \hm_{ij}^{\ \ p} \, C_{pk\ell}  \Big\}  \\
& \ \ \ \ + r^2 \Big\{ \frac{1}{2}  V_{ijk\ell} - \frac{3}{4} \big( \wD_{\ell} g^{(3)}_{pk} - \wD_k g^{(3)}_{p \ell} \big) \hm_{ij}^{\ \ p}\Big\} + O(r^3).
\end{split}
\end{align}

Next we compute $(\star \bW)_{i0j0}$:
\begin{align*}
(\star \bW)_{i0j0} &= \frac{1}{2} \bg^{km} \bg^{\ell p} \bm_{0ik\ell} \bW_{0jmp} \\
&= \frac{1}{2} \bg^{km} \bg^{\ell p} \sqrt{ \det g_r } \, \epsilon_{ik\ell} \bW_{0jmp}.
\end{align*}
Consequently,
\begin{align} \label{stbWtntn}
(\star \bW)_{i0j0} =   -\frac{1}{2} r \, \hm_i^{\ mp} \, C_{jmp}   + \frac{3}{4} r^2  \big( \wD_{p} g^{(3)}_{jm} - \wD_m g^{(3)}_{jp} \big)\hm_i^{\ mp} + O(r^3),
\end{align}
and
\begin{align} \label{bWptntn} \begin{split}
\bW^{+}_{i0j0} &= \frac{1}{2} \big( \bW_{i0j0} + (\star \bW)_{i0j0} \big) \\
&= r \Big\{ -\frac{3}{4}  g^{(3)}_{ij} - \frac{1}{4}  \,\hm_i^{\ mp} \, C_{jmp}    \Big\} \\
& \ \ + r^2 \Big\{ -\frac{1}{2}  V_{ij} + \frac{3}{8} \big( \wD_{p} g^{(3)}_{jm} - \wD_m g^{(3)}_{jp} \big) \hm_i^{\ mp} \Big\}  + O(r^3).
\end{split}
\end{align}
Likewise,
\begin{align} \label{bWmtntn} \begin{split}
\bW^{-}_{i0j0} &= r \Big\{ -\frac{3}{4}  g^{(3)}_{ij} + \frac{1}{4}  \,\hm_i^{\ mp} \, C_{jmp}  \Big\} \\
& \ \ + r^2 \Big\{ - \frac{1}{2} V_{ij}  - \frac{3}{8} \big( \wD_{p} g^{(3)}_{jm} - \wD_m g^{(3)}_{jp} \big) \hm_i^{\ mp}  \Big\}  + O(r^3).
\end{split}
\end{align}

Finally,
\begin{align*}
(\star \bW)_{0ijk} &= \frac{1}{2} \bg^{\ell p } \bg^{q m} \bm_{0i \ell m} \bW_{pq jk} \\
&= \frac{1}{2} \bg^{\ell p} \bg^{q m} \sqrt{ \det g_r } \, \epsilon_{i \ell m} \bW_{pqjk};
\end{align*}
hence
\begin{align*}
(\star \bW)_{0ijk} &= \frac{3}{4} r \Big\{ \wg_{jp} g^{(3)}_{kq} - \wg_{kp} g^{(3)}_{jq} - \wg_{jq} g^{(3)}_{kp} + \wg_{kq} g^{(3)}_{jp}  \Big\}\, \hm_i^{\ pq} + \frac{1}{2} r^2 \hm_i^{\ pq} V_{pq jk} + O(r^3). \\
\end{align*}
It follows that
\begin{align} \label{bWpnttt} \begin{split}
\bW^{+}_{0ijk} &= r \Bigg\{  \frac{3}{8} \Big[ \wg_{jp} g^{(3)}_{kq} - \wg_{kp} g^{(3)}_{jq} - \wg_{jq} g^{(3)}_{kp} + \wg_{kq} g^{(3)}_{jp}  \Big]\,  \hm_i^{\ pq} - \frac{1}{2}  C_{ijk} \Bigg\} \\
& \ \ \ \ + r^2 \Big\{ \frac{1}{4} \hm_i^{\ pq} V_{pqjk} + \frac{3}{4} \big[ \wD_k g^{(3)}_{ij} - \wD_j g^{(3)}_{ik} \big]\Big\}   + O(r^3),
\end{split}
\end{align}

\smallskip

\begin{align} \label{bWmnttt} \begin{split}
\bW^{-}_{0ijk} &= r \Bigg\{  - \frac{3}{8} \Big[ \wg_{jp} g^{(3)}_{kq} - \wg_{kp} g^{(3)}_{jq} - \wg_{jq} g^{(3)}_{kp} + \wg_{kq} g^{(3)}_{jp}  \Big]\,  \hm_i^{\ pq} - \frac{1}{2}  C_{ijk} \Bigg\}  \\
& \ \ \ \ + r^2 \Big\{ - \frac{1}{4} \hm_i^{\ pq} V_{pqjk} + \frac{3}{4} \big[ \wD_k g^{(3)}_{ij} - \wD_j g^{(3)}_{ik} \big]\Big\}   + O(r^3),
\end{split}
\end{align}

Summarizing, we have

\begin{proposition} \label{WpmProp}  Let $(X,g_{+})$ be an oriented, four-dimensional Poincar\'e-Einstein manifold.  Let $\wg$ be a representative of the conformal infinity, and $r$ the associated special defining function, and $\bg  = r^2 g_{+}$.  Then
\begin{align} \label{Wtable} \begin{split}
\bW^{\pm}_{ijk\ell} &= r \Big\{ \frac{3}{4}  \big[ \wg_{ik} g^{(3)}_{j\ell} - \wg_{i\ell} g^{(3)}_{jk} - \wg_{jk} g^{(3)}_{i \ell} + \wg_{j \ell} g^{(3)}_{ik} \big]
\mp \frac{1}{2}  \, \hm_{ij}^{\ \ p} \, C_{pk\ell}  \Big\} \\
& \ \ \ \ + r^2 \Big\{ \frac{1}{2}  V_{ijk\ell} \pm \frac{3}{4} \big( \wD_{\ell} g^{(3)}_{pk} - \wD_k g^{(3)}_{p \ell} \big) \hm_{ij}^{\ \ p}\Big\} + O(r^3),  \\
\bW^{\pm}_{i0j0} &= r \Big\{ -\frac{3}{4}  g^{(3)}_{ij} \mp \frac{1}{4}  \,\hm_i^{\ mp} \, C_{jmp}    \Big\} + r^2 \Big\{ -\frac{1}{2}  V_{ij} \pm \frac{3}{8} \big( \wD_{p} g^{(3)}_{jm} - \wD_m g^{(3)}_{jp} \big) \hm_i^{\ mp} \Big\}  + O(r^3), \\
\bW^{\pm}_{0ijk} &= r \Bigg\{ \pm \frac{3}{8} \Big[ \wg_{jp} g^{(3)}_{kq} - \wg_{kp} g^{(3)}_{jq} - \wg_{jq} g^{(3)}_{kp} + \wg_{kq} g^{(3)}_{jp}  \Big]\,  \hm_i^{\ pq} - \frac{1}{2}  C_{ijk} \Bigg\} \\
& \ \ \ \  + r^2 \Big\{ \pm \frac{1}{4} \hm_i^{\ pq} V_{pqjk} + \frac{3}{4} \big[ \wD_k g^{(3)}_{ij} - \wD_j g^{(3)}_{ik} \big]\Big\}   + O(r^3),
\end{split}
\end{align}
where the tensors $V_{ijk\ell}$ and $V_{ij}$ are given by (\ref{V4}) and (\ref{V2sec}).
\end{proposition}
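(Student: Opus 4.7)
The plan is to assemble the expansions of $\bW^{\pm}$ from the already-established expansion of $\bW$ in Proposition \ref{Wexp} by computing the Hodge dual $\star \bW$ component-by-component and then using the identity $\bW^{\pm} = \frac{1}{2}(\bW \pm \star \bW)$. The key simplifying input is the fact that the volume form $\bm$ of $\bg$ satisfies $\bm_{ijk\ell} = 0$ (since $M$ is three-dimensional) and $\bm_{0ijk} = \sqrt{\det g_r}\, \epsilon_{ijk}$, together with the expansion $\sqrt{\det g_r} = 1 + O(r^2)$ which follows from (\ref{gr}), so that $\sqrt{\det g_r}\, \bg^{mp}\, \epsilon_{ijm} = \hm_{ij}^{\ \ p} + O(r^2)$, where $\hm$ is the volume form of $\wg$.

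I would handle the three index patterns in order. First, for the purely tangential components $\bW^{\pm}_{ijk\ell}$, the only way to contract the two upper indices of $\bm_{ij}^{\ \ \rho\sigma}$ to get something nonzero is to take $(\rho,\sigma) = (m,0)$ or $(0,m)$, which forces $(\star\bW)_{ijk\ell}$ to be expressible in terms of $\bW_{0pk\ell}$; plugging in the third line of (\ref{bW}) yields the expansion (\ref{stbWtan}), and adding/subtracting from the first line of (\ref{bW}) gives the claimed formulas. Second, for $\bW^{\pm}_{i0j0}$, the same contraction-counting argument reduces $\star\bW$ to an expression in $\bW_{0jmp}$, and the computation (\ref{stbWtntn}) together with the second line of (\ref{bW}) and the identification $\wP^2_{ij} - 4 g^{(4)}_{ij} = -V_{ij}$ (via Proposition \ref{g4Prop} and Lemma \ref{VLemma}) produces the middle formula. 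Third, for $\bW^{\pm}_{0ijk}$, the contraction $\bm_{0i\ell m}$ forces the remaining indices of $\bW$ to be tangential, so $\star \bW$ is expressed through $\bW_{pqjk}$, and combining with the first line of (\ref{bW}) yields the last line.

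The bookkeeping is routine given the groundwork, so the main obstacle is purely organizational: keeping signs, antisymmetrizations, and factors consistent when forming $\frac{1}{2}(\bW \pm \star \bW)$, and making sure the $O(r^3)$ error terms are justified by the expansions in (\ref{bG})--(\ref{bR}) (in particular that the $O(r^2)$ corrections to $\bg^{mp}$ and $\sqrt{\det g_r}$ contribute only to the $O(r^3)$ remainder because $\bW_{0\cdot\cdot\cdot}$ and $\bW_{i0j0}$ vanish at $r = 0$). Since all three expansions of $\bW$ have leading order $r^1$, multiplying by the $O(r^2)$ corrections from the metric factors inside $\star$ indeed gives $O(r^3)$ terms, so the displayed leading and subleading coefficients are captured exactly.

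Once these three computations are in place, the proposition follows by simply reading off (\ref{bWpt})--(\ref{bWmt}), (\ref{bWptntn})--(\ref{bWmtntn}), and (\ref{bWpnttt})--(\ref{bWmnttt}) and collecting the $\pm$ signs, with the identifications of $V_{ijk\ell}$ and $V_{ij}$ coming from Lemma \ref{VLemma}.
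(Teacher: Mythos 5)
Your proposal is correct and follows essentially the same route as the paper: the paper's Proposition \ref{WpmProp} is precisely a summary of the computations (\ref{sWt})--(\ref{bWmnttt}), which compute $\star\bW$ component-by-component using $\bm_{ijk\ell}=0$, $\bm_{0ijk}=\sqrt{\det g_r}\,\epsilon_{ijk}$, and $\sqrt{\det g_r}\,\bg^{mp}\epsilon_{ijm}=\hm_{ij}^{\ \ p}+O(r^2)$, and then form $\bW^{\pm}=\frac{1}{2}(\bW\pm\star\bW)$. Your observation that the $O(r^2)$ metric corrections only contribute at $O(r^3)$ because every component of $\bW$ vanishes at $r=0$ is exactly the justification the paper uses implicitly.
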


\bigskip

\section{Expansions for even and self-dual metrics}  \label{Expansions2}

In this section we state two key corollaries of the preceding calculations.

\begin{corollary} \label{EvenCor}  Let $(X,g_{+})$ be an even, four-dimensional Poincar\'e-Einstein manifold, and $\wg$ a representative in the conformal infinity with associated special defining function $r$.
Then
\begin{align} \label{bW2g3} \begin{split}
\bW_{ijk\ell} &= r^2 V_{ijk\ell} + O(r^3), \\
\bW_{i 0 j 0} &= -  r^2 V_{ij}  + O(r^3), \\
\bW_{0ijk} &= - r C_{ijk} + O(r^3),
\end{split}
\end{align}
where $V_{ijk\ell}$ and $V_{ij}$ are given by (\ref{V4a}) and (\ref{V2a}).  In particular,
\begin{align} \label{W2g3}
|\bW|^2_{\bg} = r^2 |C|^2 + O(r^4).
\end{align}

Also, we have
\begin{align} \label{Wtableg3} \begin{split}
\bW^{\pm}_{ijk\ell} &= \mp \frac{1}{2} r \, \hm_{ij}^{\ \ p} \, C_{pk\ell} + \frac{1}{2} r^2  V_{ijk\ell} + O(r^3),  \\
\bW^{\pm}_{i0j0} &=  \mp \frac{1}{4} r \,\hm_i^{\ mp} \, C_{jmp}   -\frac{1}{2} r^2  V_{ij} + O(r^3), \\
\bW^{+}_{0ijk} &= - \frac{1}{2} r C_{ijk}  \pm \frac{1}{4} r^2 \hm_i^{\ pq} V_{pqjk} + O(r^3).
\end{split}
\end{align}
\end{corollary}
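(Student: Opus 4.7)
The strategy is pure substitution, with a short norm computation at the end. Since $(X,g_{+})$ is even, Definition \ref{EvenDef} gives $g^{(3)} \equiv 0$, and the plan is simply to insert this into the expansions of Section \ref{Expansions1} and read off the claimed formulas. Dropping every $g^{(3)}$-term from the three lines of Proposition \ref{Wexp} immediately yields (\ref{bW2g3}): the $O(r)$ leading pieces of $\bW_{ijk\ell}$ and $\bW_{i0j0}$ are linear in $g^{(3)}$ and vanish (so both components start at $O(r^2)$), while the $O(r^2)$ piece of $\bW_{0ijk}$ is a derivative of $g^{(3)}$ and also vanishes (so $\bW_{0ijk} = -rC_{ijk} + O(r^3)$). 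Setting $g^{(3)} = 0$ in each line of Proposition \ref{WpmProp} produces (\ref{Wtableg3}) in exactly the same way, with no further work.

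For the norm identity (\ref{W2g3}), I would decompose
\begin{equation*}
|\bW|^2_{\bg} = \frac{1}{4}\,\bW_{\alpha\beta\gamma\delta}\,\bW^{\alpha\beta\gamma\delta}
\end{equation*}
according to the number of zero-indices appearing in each slot. Because $r$ is a special defining function, $\bg = dr^2 + g_r$ near $M$, so $\bg^{00} = 1$, $\bg^{0i} = 0$, and $\bg^{ij} = g_r^{ij} = \wg^{ij} + O(r^2)$; raising indices therefore introduces only $O(r^2)$ corrections to the leading behavior. By the Weyl symmetries the distinct nonzero contributions are the fully tangential sum $\bW_{ijk\ell}\bW^{ijk\ell}$, the four symmetry-equivalent single-zero sums typified by $\bW_{0ijk}\bW^{0ijk}$, and the double-zero sums typified by $\bW_{i0j0}\bW^{i0j0}$. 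The first and third are $O(r^4)$ by (\ref{bW2g3}). For the second, (\ref{bW2g3}) gives
\begin{equation*}
\bW_{0ijk}\bW^{0ijk} = \bigl(-rC_{ijk} + O(r^3)\bigr)\bigl(-rC^{ijk} + O(r^3)\bigr) = r^2|C|^2 + O(r^4),
\end{equation*}
and assembling the pieces with the appropriate combinatorial factors yields $|\bW|^2_{\bg} = r^2|C|^2 + O(r^4)$.

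The whole argument is bookkeeping once Propositions \ref{Wexp} and \ref{WpmProp} are available; the only slightly subtle point—and the one place I expect to pause—is the exponent $O(r^4)$ rather than $O(r^3)$ in the remainder of (\ref{W2g3}). Without the even assumption the next-order correction to $\bW_{0ijk}$ is an $O(r^2)$ term proportional to $\wD g^{(3)}$, which would yield an $O(r^3)$ cross term in the norm. Evenness kills that piece, so in fact $\bW_{0ijk} = -rC_{ijk} + O(r^3)$ and the cross term is indeed $O(r^4)$. This is the only step where evenness does more than merely zero out the stated $g^{(3)}$-coefficients, and it is precisely what Claim \ref{bClaim} (and hence the proof of Theorem \ref{EvenThm}) will need downstream.
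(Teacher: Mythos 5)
Your proposal is correct and matches the paper's (one-line) proof: both simply set $g^{(3)}=0$ in Propositions \ref{Wexp} and \ref{WpmProp}. Your additional decomposition of $|\bW|^2_{\bg}$ by the number of $0$-indices, and the observation that evenness is what pushes the remainder to $O(r^4)$ by killing the $\wD g^{(3)}$ term in $\bW_{0ijk}$, correctly fills in the detail for (\ref{W2g3}) that the paper leaves implicit.
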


\begin{proof}  These formulas follow from Propositions \ref{Wexp} and \ref{WpmProp} and the fact that $g^{(3)} = 0$.  \end{proof}

\medskip


\begin{corollary} \label{SDCor3}  If $(X,g_{+})$ is an oriented, self-dual Poincar\'e-Einstein manifold, then
\begin{align} \label{bWSD}  \begin{split}
\bW^{+}_{ijk\ell} &= - r \, \hm_{ij}^{\ \ p} \, C_{pk\ell}  + r^2 V_{ijk\ell} + O(r^3), \\
\bW^{+}_{i0j0} &=  -\frac{1}{2} r \, \mathcal{C}_{ij}   - r^2  V_{ij}  + O(r^3),  \\
\bW^{+}_{0ijk} &= -  r C_{ijk} + \frac{1}{2} r^2  V_{pqjk} \, \hm_i^{\ pq}   + O(r^3),
\end{split}
\end{align}
where
\begin{align} \label{mC}
\mathcal{C}_{ij} = \hm_i^{\ k\ell} C_{j k \ell}.
\end{align}
\end{corollary}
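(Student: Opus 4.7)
The plan is to combine Proposition \ref{WpmProp} with the self-duality hypothesis $W^{-}_{g_{+}} \equiv 0$. Since the Hodge $\ast$ on two-forms is conformally invariant in dimension four, the decomposition $W = W^{+} + W^{-}$ is compatible with the conformal change $\bg = r^2 g_{+}$, so $\bW^{-}_{\bg} = r^2 W^{-}_{g_{+}} = 0$ identically on $\overline{X}$. Consequently every coefficient of every power of $r$ in the Taylor expansion of $\bW^{-}$ given by Proposition \ref{WpmProp} must vanish, producing algebraic identities tying $g^{(3)}$, $C$, $V_{ijk\ell}$, and $V_{ij}$ together. The strategy is simply to substitute these identities into the parallel expansions of $\bW^{+}$ given by the same proposition.

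At order $r$, the vanishing of $\bW^{-}_{i0j0}$ immediately forces
\begin{equation*}
g^{(3)}_{ij} = \tfrac{1}{3}\hm_i^{\ mp}C_{jmp} = \tfrac{1}{3}\mathcal{C}_{ij},
\end{equation*}
which identifies the Neumann data of the self-dual PE filling with the boundary Cotton tensor. The order-$r$ vanishings of $\bW^{-}_{ijk\ell}$ and $\bW^{-}_{0ijk}$ reduce to algebraic consequences of this identity. Substituting $g^{(3)} = \tfrac{1}{3}\mathcal{C}$ into the order-$r$ terms of the $\bW^{+}$ formulas, the two contributions — the $g^{(3)}$-piece and the $C$-piece — which cancel in $\bW^{-}$ now add in $\bW^{+}$, producing exactly the three leading expressions $-r\,\hm_{ij}^{\ \ p}C_{pk\ell}$, $-\tfrac{1}{2}r\,\mathcal{C}_{ij}$, and $-r\,C_{ijk}$ claimed in (\ref{bWSD}).

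At order $r^2$, setting each of the three $\bW^{-}$ components to zero yields the identities
\begin{align*}
\tfrac{3}{4}\bigl(\wD_\ell g^{(3)}_{pk} - \wD_k g^{(3)}_{p\ell}\bigr)\hm_{ij}^{\ \ p} &= \tfrac{1}{2}V_{ijk\ell},\\
\tfrac{3}{8}\bigl(\wD_p g^{(3)}_{jm} - \wD_m g^{(3)}_{jp}\bigr)\hm_i^{\ mp} &= -\tfrac{1}{2}V_{ij},\\
\tfrac{3}{4}\bigl(\wD_k g^{(3)}_{ij} - \wD_j g^{(3)}_{ik}\bigr) &= \tfrac{1}{4}\hm_i^{\ pq}V_{pqjk}.
\end{align*}
Adding each of these to the corresponding $r^2$ term in the expansion of $\bW^{+}$ doubles the intrinsic contribution and produces exactly the coefficients $V_{ijk\ell}$, $-V_{ij}$, and $\tfrac{1}{2}\hm_i^{\ pq}V_{pqjk}$ in (\ref{bWSD}).

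The argument is essentially bookkeeping once Proposition \ref{WpmProp} is available, and there is no real obstacle. The only substantive content is the order-$r$ identification $g^{(3)} = \tfrac{1}{3}\mathcal{C}$; the three order-$r^2$ identities listed above are in principle consequences of this formula (by differentiation and contraction with $\hm$), but it is more efficient and uniform to extract them directly from the vanishing of $\bW^{-}$.
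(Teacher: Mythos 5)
Your proposal is correct and follows essentially the same route as the paper: the authors likewise use the vanishing of $\bW^{-}$ (equation (\ref{bWmt})) to extract the identities (\ref{g3formula}) relating $g^{(3)}$, $C$, $V_{ijk\ell}$, and $V_{ij}$, and then substitute into the $\bW^{+}$ expansions of Proposition \ref{WpmProp}. Your explicit order-$r$ identification $g^{(3)} = \tfrac{1}{3}\mathcal{C}$ is just the traced form of the paper's first identity in (\ref{g3formula}) (which the authors note is equivalent to (5.5) of Theorem 5.3 in \cite{FG}), so the two arguments coincide.
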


\begin{proof}  Since $g_{+}$ is self-dual, it follows from (\ref{bWmt}) that
\begin{align} \label{g3formula} \begin{split}
&\frac{3}{4}  \big[ \wg_{ik} g^{(3)}_{j\ell} - \wg_{i\ell} g^{(3)}_{jk} - \wg_{jk} g^{(3)}_{i \ell} + \wg_{j \ell} g^{(3)}_{ik} \big] =
 - \frac{1}{2}  \, \hm_{ij}^{\ \ p} \, C_{pk\ell}, \\
& \ \ \ \ \ \ \ \  \frac{3}{4} \big( \wD_{\ell} g^{(3)}_{pk} - \wD_k g^{(3)}_{p \ell} \big) \hm_{ij}^{\ \ p} = \frac{1}{2}  V_{ijk\ell}.
\end{split}
\end{align}
Note that the first formula above is equivalent to (5.5) of Theorem 5.3 in \cite{FG}.

Substituting these into (\ref{bWpt}) we get the first formula in (\ref{bWSD}).  The proof of the other two formulas is similar.
\end{proof}

Determining the expansion of the norm of the Weyl tensor is much more involved in the self-dual case, so we state it as a separate result:

\begin{proposition} \label{WnormProp}  If $(X,g_{+})$ is an oriented self-dual Poincar\'e-Einstein manifold, then
\begin{align} \label{bWpnorm}
|\bW^{+}|_{\bg}^2 =  r^2 \, |\mathcal{C}|_{\wg}^2 + 4 r^3 \,  \langle V, \mathcal{C} \rangle + O(r^4).
\end{align}
\end{proposition}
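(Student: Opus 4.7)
The plan is to compute $|\bW^+|_{\bg}^2$ directly from the component expansions in Corollary \ref{SDCor3}, keeping track of terms through order $r^3$. First, I would separate the squared norm by the number of ``normal'' (index $0$) slots, using the symmetries of the Weyl tensor to write
\begin{align*}
|\bW^+|_{\bg}^2 = \tfrac{1}{4}\bW^+_{ijk\ell}(\bW^+)^{ijk\ell} + \bW^+_{0ijk}(\bW^+)^{0ijk} + \bW^+_{0i0j}(\bW^+)^{0i0j}.
\end{align*}
Since $\bg_{ij} = \wg_{ij} + O(r^2)$ and the leading terms of every component of $\bW^+$ are of order $r$, the $O(r^2)$ metric corrections enter only at order $r^4$, so indices may be raised with $\wg$ throughout the calculation.

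Next I would insert the expansions from Corollary \ref{SDCor3} and read off the $r^2$ coefficient first. The tangential-tangential term contributes $\tfrac{1}{4}\cdot r^2 \hm_{ij}^{\ \ p}\hm^{ij}_{\ \ q} C_{pk\ell}C^{qk\ell}$, which by $\hm_{ij}^{\ \ p}\hm^{ij}_{\ \ q}=2\delta^p_q$ reduces to $\tfrac{1}{2}r^2|C|^2$; the mixed term yields $r^2|C|^2$; and the $(0i0j)$ term gives $\tfrac{1}{4}r^2|\mathcal{C}|^2$. Combining these with the identity $|\mathcal{C}|^2 = 2|C|^2$ (an easy consequence of $\mu_i^{\ kl}\mu^i_{\ mn} = \delta^k_m\delta^l_n - \delta^k_n\delta^l_m$ and the antisymmetry $C_{ijk}=-C_{ikj}$) produces the coefficient $(\tfrac14+\tfrac12+\tfrac14) r^2|\mathcal{C}|^2 = r^2|\mathcal{C}|^2$ for the leading term.

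The main work is extracting the $r^3$ coefficient. Each of the three pieces contributes a cross-term between the $O(r)$ and $O(r^2)$ parts of $\bW^+$. The $(0i0j)$ cross-term is immediate: $2\cdot(-\tfrac12 r\mathcal{C}_{ij})(-r^2 V^{ij}) = r^3\langle\mathcal{C},V\rangle$. The mixed cross-term equals $-r^3\, C_{ijk}\hm^i_{\ pq}V^{pqjk}$, and the tangential-tangential cross-term equals $-\tfrac{1}{2} r^3 \hm_{ij}^{\ \ p} C_{pk\ell} V^{ijk\ell}$. To simplify these two, I would substitute the explicit form $V_{ijk\ell} = \wg_{ik}V_{j\ell} - \wg_{i\ell}V_{jk} - \wg_{jk}V_{i\ell} + \wg_{j\ell}V_{ik}$ from (\ref{V4a}) to reduce each contraction to a sum of terms of the shape $\epsilon_{abc}C_{*ad}V_{bd}$ (or cyclic rearrangements). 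The key algebraic input will be the cyclic (second-Bianchi-type) identity $C_{ijk} + C_{jki} + C_{kij}=0$, together with the antisymmetry $C_{ijk}=-C_{ikj}$, which yields the basic reduction
\begin{align*}
\mu^{abc}C_{cad} = -\tfrac{1}{2}\mathcal{C}_{\cdot d}\quad\text{(with appropriate free index)};
\end{align*}
this is precisely the step that lets all three cross-terms be expressed in terms of $\langle\mathcal{C},V\rangle$.

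Carrying out the bookkeeping, the tangential-tangential cross-term contributes $r^3\langle\mathcal{C},V\rangle$ and the mixed cross-term $2r^3\langle\mathcal{C},V\rangle$, so the three pieces sum to $(1+2+1)r^3\langle\mathcal{C},V\rangle = 4r^3\langle\mathcal{C},V\rangle$, which is the claimed coefficient. The hard part here is not the conceptual one but the combinatorics of the $r^3$ contractions: one must carefully exploit the symmetries of $V_{ijk\ell}$ and the cyclic identity for $C$ to avoid being drowned in index manipulations. All higher terms are bounded by $O(r^4)$ because the omitted parts of each expansion are $O(r^3)$ and multiplication by an $O(r)$ factor (or squaring an $O(r^2)$ one) produces $O(r^4)$ contributions.
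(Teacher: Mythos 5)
Your proposal is correct and follows essentially the same route as the paper: the same decomposition of $|\bW^{+}|_{\bg}^2$ into tangential, mixed, and normal-normal pieces, the same use of the component expansions from Corollary \ref{SDCor3} with $\wg$-raised indices, and the same resulting coefficients $\left(\tfrac14+\tfrac12+\tfrac14\right)r^2|\mathcal{C}|^2$ and $(1+2+1)r^3\langle V,\mathcal{C}\rangle$. The only cosmetic difference is that the paper packages the cross-term reductions into a single identity (Claim \ref{VC}) proved directly from $C_{[mk\ell]}=0$ and $\hm^{pij}\hm_{pk\ell}=\delta_{ik}\delta_{j\ell}-\delta_{jk}\delta_{i\ell}$ without first substituting the decomposition (\ref{V4a}) of $V_{ijk\ell}$, whereas you propose to substitute that decomposition first; both lead to the same contractions.
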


\begin{proof}
First, we note that
\begin{align} \label{Weylnorm} \begin{split}
|\bW^{+}|_{\bg}^2 &= \frac{1}{4} \bg^{\alpha \mu} \bg^{\beta \nu} \bg^{\gamma \delta} \bg^{\sigma \tau} \bW^{+}_{\alpha \beta \gamma \sigma} \bW^{+}_{\mu \nu \delta \tau} \\
&= \frac{1}{4} \bg^{ip} \bg^{jq} \bg^{kr} \bg^{\ell s} \bW^{+}_{ijk\ell} \bW^{+}_{pq rs} + \bg^{ip} \bg^{jq} \bg^{kr}   \bW^{+}_{0ijk} \bW^{+}_{0 pqr} \\
& \ \ \ \   + \bg^{ip} \bg^{jq} \bW^{+}_{i 0 j 0} \bW^{+}_{p 0 q 0} \\
&:= I_1 + I_2 + I_3.
\end{split}
\end{align}
By the first formula in (\ref{bWSD}),
\begin{align} \label{I1a} \begin{split}
I_1 &= \frac{1}{4} \bg^{ip} \bg^{jq} \bg^{kr} \bg^{\ell s} \bW^{+}_{ijk\ell} \bW^{+}_{pq rs} \\
&= \frac{1}{4} \big( \wg^{ip} + O(r^2) \big)\big( \wg^{jq} + O(r^2) \big) \big(  \wg^{kr} + O(r^2) \big) \big(  \wg^{\ell s} + O(r^2) \big) \Big\{  - r \, \hm_{ij}^{\ \ m} \, C_{mk\ell}  + r^2 V_{ijk\ell} \\
& \ \ \  + O(r^3) \Big\} \Big\{    - r \, \hm_{pq}^{\ \ n} \, C_{nrs}  + r^2 V_{pqrs} + O(r^3) \Big\} \\
&= \frac{1}{4} r^2 \, \wg^{ip} \wg^{jq} \wg^{kr} \wg^{\ell s} \hm_{ij}^{\ \ m} \hm_{pq}^{\ \ n} C_{mk\ell} C_{nrs} - \frac{1}{2} r^3 \, \wg^{ip} \wg^{jq} \wg^{kr} \wg^{\ell s} \hm_{ij}^{\ \ m} \hm_{pq}^{\ \ n} C_{mk\ell} V_{pqrs} + O(r^4) \\
&= \frac{1}{4} r^2 \, \hm^{pqm} \hm_{pq}^{\ \ n} C_{mk\ell} C_n^{\ k \ell} - \frac{1}{2} r^3 \, \hm^{pqm} C_{m k \ell} V_{pq}^{\ \ k \ell} + O(r^4).
\end{split}
\end{align}
It will be convenient to rewrite the second term on the right.  First, by skew-symmetry of the volume form,
\begin{align} \label{I1b}
- \frac{1}{2} \, \hm^{pqm} C_{m k \ell} V_{pq}^{\ \ k \ell} = - \frac{1}{2} \, \hm^{mpq} V_{pq}^{\ \ k \ell} C_{m k \ell}.
\end{align}

\begin{claim} \label{VC}
\begin{align} \label{vceq}
- \frac{1}{2} \hm^{mpq} V_{pq}^{\ \ k \ell} C_{m k \ell} = \langle V, \mathcal{C} \rangle = V_i^j \mathcal{C}_j^i.
\end{align}
\end{claim}

\begin{proof}  In the following, we will repeatedly use the identity
\begin{align} \label{mid}
\hm^{pij} \hm_{pk\ell} = \delta_{ik} \delta_{j \ell} - \delta_{jk} \delta_{i \ell}.
\end{align}
From this, it follows that
\begin{align*}
 \frac{1}{2} \hm^m_{\ jk} \mathcal{C}_{mi} &= \frac{1}{2} \hm^m_{\ jk} \hm_m^{\ pq} C_{ipq} \\
 &= \frac{1}{2} \hm_{mjk} \hm^{mpq} C_{ipq} \\
 &= \frac{1}{2} \big( \delta_{jp} \delta_{kq} - \delta_{jq} \delta_{kp} \big) C_{ipq}  \\
 &= \frac{1}{2} \big( C_{ijk} - C_{ikj} \big) \\
 &= C_{ijk},
\end{align*}
which can also be expressed as
\begin{align} \label{CC}
C_{ijk} = \frac{1}{2} \hm_{mjk} \mathcal{C}_i^m.
\end{align}

Since $C_{[mk\ell]} = 0$, we can rewrite the term in (\ref{vceq}) as
\begin{align} \label{rw1} \begin{split}
- \frac{1}{2} \hm^{mpq} V_{pq}^{\ \ k \ell} C_{m k \ell} &= \frac{1}{2} \hm^{mpq} V_{pq}^{\ \ k \ell} C_{k\ell m}  + \frac{1}{2} \hm^{mpq} V_{pq}^{\ \ k \ell} C_{\ell m k }.
\end{split}
\end{align}
Hence, by (\ref{CC}),
\begin{align} \label{rw2} \begin{split}
- \frac{1}{2} \hm^{mpq} V_{pq}^{\ \ k \ell} C_{m k \ell} &= \frac{1}{4} \hm^{mpq} V_{pq}^{\ \ k \ell} \hm_{s\ell m} \mathcal{C}^s_k + \frac{1}{4} \hm^{mpq} V_{pq}^{\ \ k \ell} \hm_{smk} \mathcal{C}^s_{\ell} \\
&= - \frac{1}{4} \hm^{mpq} \hm_{m \ell s}  V_{pq}^{\ \ k \ell} \mathcal{C}^s_k - \frac{1}{4} \hm^{mpq} \hm_{msk} V_{pq}^{\ \ k \ell} \mathcal{C}^s_{\ell} \\
&= - \frac{1}{4} \big( \delta_{p \ell} \delta_{q s} - \delta_{\ell q} \delta_{ps} \big) V_{pq}^{\ \ k \ell} \mathcal{C}^s_k - \frac{1}{4}  \big( \delta_{ps} \delta_{qk} - \delta_{sq} \delta_{pk} \big) V_{pq}^{\ \ k \ell} \mathcal{C}^s_{\ell} \\
&=  V_s^k \mathcal{C}_k^s.
\end{split}
\end{align}

\end{proof}

For the first term in the last line of (\ref{I1a}), we use the identity (\ref{mid}) to show
\begin{align*}
\frac{1}{4} r^2 \, \hm^{pqm} \hm_{pq}^{\ \ n} C_{mk\ell} C_n^{\ k \ell} = \frac{1}{4} |\mathcal{C}|^2.
\end{align*}
Combining with Claim \ref{VC}, we get
\begin{align} \label{I1}
I_1 =  \frac{1}{4} r^2 |\mathcal{C}|^2 + r^3 \, \langle V , \mathcal{C} \rangle + O(r^4).
\end{align}

Using the second formula in (\ref{bWSD}) we find
\begin{align} \label{I2a} \begin{split}
I_2 &= \bg^{ip} \bg^{jq} \bg^{kr}   \bW^{+}_{0ijk} \bW^{+}_{0 pqr} \\
&= \big( \wg^{ip} + O(r^2) \big)\big( \wg^{jq} + O(r^2) \big) \big(  \wg^{kr} + O(r^2) \big) \Big\{ -  r C_{ijk} + \frac{1}{2} r^2  V_{pqjk} \, \hm_i^{\ pq}   + O(r^3) \Big\} \\
& \ \ \ \times \Big\{ -  r C_{pqr} + \frac{1}{2} r^2  V_{abqr} \, \hm_p^{\ ab}   + O(r^3) \Big\} \\
&= r^2 \, \wg^{ip} \wg^{jq} \wg^{kr}C_{ijk} C_{pqr} - r^3 \, \wg^{ip} \wg^{jq} \wg^{kr} C_{ijk} V_{abqr} \, \hm_p^{\ ab} + O(r^4) \\
&= \frac{1}{2}  r^2 |\mathcal{C}|^2 - r^3 \, \hm^{iab}  V_{ab}^{\ \ jk} C_{ijk}  + O(r^4).
\end{split}
\end{align}
By Claim \ref{VC}, this can be expressed as
\begin{align} \label{I2}
I_2 = \frac{1}{2} r^2 |\mathcal{C}|^2 + 2 r^3 \,  \langle V, \mathcal{C} \rangle  + O(r^4).
\end{align}

Finally,
\begin{align} \label{I3} \begin{split}
I_3 &= \bg^{ip} \bg^{jq} \bW^{+}_{i 0 j 0} \bW^{+}_{p 0 q 0} \\
&= \big( \wg^{ip} + O(r^2) \big) \big( \wg^{jq} + O(r^2) \big) \Big\{  -\frac{1}{2} r \, \mathcal{C}_{ij}   - r^2  V_{ij}  + O(r^3) \Big\} \Big\{ -\frac{1}{2} r \, \mathcal{C}_{pq}   - r^2  V_{pq}  + O(r^3) \Big\} \\
&= \frac{1}{4} r^2 \, |\mathcal{C}|_{\wg}^2 + r^3 \, \wg^{ip} \wg^{kq} \mathcal{C}_{ij} V_{pq} + O(r^4) \\
&= \frac{1}{4} r^2 \, |\mathcal{C}|_{\wg}^2 + r^3 \,  \langle V, \mathcal{C} \rangle + O(r^4).
\end{split}
\end{align}
Combining (\ref{I1}), (\ref{I2}), and (\ref{I3}), we get (\ref{bWpnorm}).
\end{proof}

\bigskip

\section{A conformal invariant in dimension three}  \label{CISec}


\begin{proof}[The proof of Theorem \ref{CIThm}]  Although it is possible to verify (\ref{changes}) directly, we will give a ``holographic'' construction of the invariants.

Let $(M,g)$ be a three-dimensional Riemannian manifold.  By Theorem 5.3 of \cite{FG}, there is an $\epsilon > 0$ and a (formal) self-dual Poincar\'e-Einstein metric $g_{+}$ defined on $X = M \times [0,\epsilon)$ whose conformal infinity is $(M,[g])$.  By ``formal", we mean that $g_{+}$ can be expressed as
\begin{align*}
g_{+} = r^{-2} \left( dr^2 + g_r \right),
\end{align*}
where $g_r$ is a one-parameter family of metrics on $M$ that is determined to infinite order.  In particular, if we write
\begin{align*}
g_r = g + g^{(2)} r^2 + g^{(3)} r^3 + g^{(4)} r^4 + \cdots,
\end{align*}
then the Einstein condition determines $g^{(2)}$ and $g^{(4)}$, while the self-duality condition determines $g^{(3)}$ (see (\ref{g3formula})).  Then, by the proof of Proposition \ref{WnormProp},
\begin{align} \label{bWpnorm2}
|\bW^{+}|_{\bg}^2 =  r^2 \, |\mathcal{C}_{g}|_{g}^2 + 4 r^3 \,  \langle V_g, \mathcal{C}_g \rangle_g + O(r^4),
\end{align}
where $\bg = r^2 g_{+} = dr^2 + g_r$.  Since
\begin{align*}
|W^{+}_{g_{+}}|^2_{g_{+}} = r^4 |\bW^{+}|_{\bg}^2,
\end{align*}
(\ref{bWpnorm2}) implies
\begin{align} \label{bWpnorm2a}
|W^{+}_{g_{+}}|^2_{g_{+}}  =  r^6 \, |\mathcal{C}_{g}|_{g}^2 + 4 r^7 \,  \langle V_g, \mathcal{C}_g \rangle_g + O(r^8).
\end{align}

Now, let $\tilde{g} \in [g]$, and write
\begin{align} \label{gug}
\tilde{g} = e^{2w_0} g
\end{align}
for some $w_0 \in C^{\infty}(M)$.  If we write $g_{+}$ in normal form with
\begin{align*}
g_{+} = \tilde{r}^{-2} \left( d\tilde{r}^2 + \tilde{g}_{\tilde{r}} \right),
\end{align*}
where $\tilde{g}_{\tilde{r}} \vert_{\tilde{r} = 0} = \tilde{g}$, then (\ref{bWpnorm2a}) becomes
\begin{align} \label{bWpnorm3a}
|W^{+}_{g_{+}}|^2_{g_{+}}  =  \tilde{r}^6 \, |\mathcal{C}_{\tilde{g}}|_{\tilde{g}}^2 + 4 \tilde{r}^7 \,  \langle V_{\tilde{g}}, \mathcal{C}_{\tilde{g}} \rangle_{\tilde{g}} + O(\tilde{r}^8).
\end{align}
By Lemma 2.2 of \cite{RobinRNV},
\begin{align*}
\tilde{r} = r e^w,
\end{align*}
where $w$ has an expansion that consists of only even powers of $r$:
\begin{align*}
w = w_0 + O(r^2),
\end{align*}
where $w_0$ is given in (\ref{gug}).  In particular,
\begin{align} \label{rrt} \begin{split}
\tilde{r}^6 &= r^6 e^{6w_0} + O(r^8),  \\
\tilde{r}^7 &= r^7 e^{7w_0} + O(r^9).
\end{split}
\end{align}
Substituting these into (\ref{bWpnorm3a}) and comparing with (\ref{bWpnorm2a}), we get (\ref{changes}).

\end{proof}

\bigskip

%
%

\end{document}